\documentclass[10pt]{article}

\usepackage{amsmath,amssymb,amsthm,mathtools}
\usepackage{aliascnt}
\usepackage{bm}
\usepackage{geometry}
\usepackage{enumitem}
\usepackage{longtable}
\usepackage{graphicx}
\usepackage{tikz}
\usepackage{authblk}
\usepackage[colorlinks=false,hidelinks]{hyperref}
\usepackage[nameinlink,noabbrev]{cleveref}
\theoremstyle{definition}
\newtheorem{definition}{Definition}[section]
\theoremstyle{plain}
\newaliascnt{theorem}{definition}
\newtheorem{theorem}[theorem]{Theorem}
\aliascntresetthe{theorem}
\newaliascnt{proposition}{definition}
\newtheorem{proposition}[proposition]{Proposition}
\aliascntresetthe{proposition}
\newaliascnt{lemma}{definition}
\newtheorem{lemma}[lemma]{Lemma}
\aliascntresetthe{lemma}
\newaliascnt{corollary}{definition}
\newtheorem{corollary}[corollary]{Corollary}
\aliascntresetthe{corollary}
\theoremstyle{remark}
\newaliascnt{remark}{definition}
\newtheorem{remark}[remark]{Remark}
\aliascntresetthe{remark}
\newaliascnt{example}{definition}
\newtheorem{example}[example]{Example}
\aliascntresetthe{example}
\newaliascnt{question}{definition}
\newtheorem{question}[question]{Question}
\aliascntresetthe{question}

\crefname{section}{Section}{Sections}
\Crefname{section}{Section}{Sections}
\crefname{subsection}{Subsection}{Subsections}
\Crefname{subsection}{Subsection}{Subsections}
\crefname{appendix}{Appendix}{Appendices}
\Crefname{appendix}{Appendix}{Appendices}
\crefname{equation}{Equation}{Equations}
\Crefname{equation}{Equation}{Equations}
\crefformat{equation}{#2(#1)#3}
\Crefformat{equation}{#2(#1)#3}
\crefname{table}{Table}{Tables}
\Crefname{table}{Table}{Tables}
\crefname{definition}{Definition}{Definitions}
\Crefname{definition}{Definition}{Definitions}
\crefname{theorem}{Theorem}{Theorems}
\Crefname{theorem}{Theorem}{Theorems}
\crefname{proposition}{Proposition}{Propositions}
\Crefname{proposition}{Proposition}{Propositions}
\crefname{lemma}{Lemma}{Lemmas}
\Crefname{lemma}{Lemma}{Lemmas}
\crefname{corollary}{Corollary}{Corollaries}
\Crefname{corollary}{Corollary}{Corollaries}
\crefname{remark}{Remark}{Remarks}
\Crefname{remark}{Remark}{Remarks}
\crefname{example}{Example}{Examples}
\Crefname{example}{Example}{Examples}
\crefname{question}{Question}{Questions}
\Crefname{question}{Question}{Questions}

\newcommand{\tw}{\mathrm{tw}}

\newcommand{\F}{\mathbb{F}}
\newcommand{\Pone}{\mathbb{P}^{1}}
\newcommand{\GL}{\operatorname{GL}}
\newcommand{\PSL}{\operatorname{PSL}}

\newcommand{\SL}{\operatorname{SL}}
\newcommand{\Sq}{\operatorname{Sq}}

\newcommand{\Tr}{\operatorname{tr}}

\newcommand{\calD}{\mathcal{D}}
\newcommand{\calU}{\mathcal{U}}

\title{Twisted primitive group association schemes}
\author[1]{Akihiro Higashitani}
\author[1]{Masanari Kamiya}
\author[2]{Hirotake Kurihara}

\affil[1]{\footnotesize Department of Pure and Applied Mathematics, Graduate School of Information Science and Technology, Osaka University, Yamadaoka 1-5, Suita, Osaka 565-0871, Japan}
\affil[2]{\footnotesize Department of Applied Science, Yamaguchi University, 2-16-1 Tokiwadai, Ube 755-8611, Japan}
\date{}

\begin{document}
\maketitle

\begin{abstract}
We give results on the question of whether the intersection numbers of a primitive group association scheme determine it up to combinatorial isomorphism.
For $G=\PSL(2,q)$, where $q$ is an odd prime power with $q=11$ or $q\ge 17$, or $q=2^f$ with $f\ge3$, we construct a Schur partition that is algebraically isomorphic to the partition of $G$ into conjugacy classes but not combinatorially isomorphic to it.
Consequently, the corresponding primitive group association schemes are not determined up to combinatorial isomorphism by their intersection numbers; in particular, they are non-separable.

For $\mathfrak A_6$ and $\mathfrak A_8$, we also explicitly construct Schur partitions that are algebraically isomorphic to the corresponding partitions into conjugacy classes but not combinatorially isomorphic to them.
\end{abstract}

\medskip
\noindent\textbf{Keywords and phrases.}
group association schemes; Schur partitions; Schur rings
(S-rings); Cayley association schemes; intersection numbers;
$\PSL(2,q)$; alternating groups; finite simple groups.

\smallskip
\noindent\textbf{2020 Mathematics Subject Classification.}
Primary 05E30; Secondary 20D06, 20C15, 20B25.

\section{Introduction}
A recurring theme in finite group theory is how much of a finite group can be recovered from its ordinary character table.
Character tables package conjugacy classes and irreducible representations into a finite numerical object, and they are standard data in the study of finite simple and almost simple groups \cite{IsaacsCharacterTheory,AtlasFiniteGroups}.
Thus one is led to ask whether the character table of a finite group determines the group up to isomorphism.
In general, the answer is no.
The dihedral group $D_8$ and the quaternion group $Q_8$ have the same character table although they are not isomorphic.
Nevertheless, this question remains meaningful for particular groups and families, and related questions also arise in the classification of finite simple groups.

Association schemes give a combinatorial version of this question.
They originated in the study of partially balanced incomplete block designs and the Bose--Mesner algebra, and later became a framework connecting algebraic combinatorics, coding theory, permutation groups, and representation theory \cite{BoseMesner1959,BannaiIto,BaileyAssociationSchemes}.
For a finite group $G$, the group association scheme $\mathfrak X(G)$ is obtained from the partition of $G$ into conjugacy classes.
Equivalently, its Bose--Mesner algebra is the center of the group algebra $\mathbb C G$, with distinguished basis given by the conjugacy classes.
The first eigenmatrix $P$ of $\mathfrak X(G)$ is the ordinary character table with the usual normalization,
\[
  P_i(\chi)=\frac{|C_i|\chi(g_i^{-1})}{\chi(1)}
  \qquad (g_i\in C_i),
\]
where the inverse appears because of the relation convention used below.
Thus it is natural to ask whether this algebraic multiplication table, or equivalently the intersection numbers of $\mathfrak X(G)$, determines $\mathfrak X(G)$ up to combinatorial isomorphism.
The same question for Schur partitions, equivalently S-rings over groups, gives a natural intermediate problem.
S-rings give the algebraic language for Cayley association schemes.
They are used in isomorphism problems for Cayley objects and in the study of permutation groups using coherent configurations \cite{MuzychukPonomarenko,EvdokimovPonomarenko2009}.

Several positive results are known.
For example, Tomiyama proved that the group association schemes of $\mathfrak A_5$ and $\PSL(2,7)$ are uniquely determined up to combinatorial isomorphism by their intersection numbers, and Tomiyama--Yamazaki obtained analogous results for symmetric groups \cite{TomiyamaA5,TomiyamaPSL27,TomiyamaYamazakiSn}.
Yoshiara and Terada also constructed negative examples using split and non-split extensions \cite{YoshiaraSameParameters,TeradaSameParameters}.
The groups in their examples are not simple.

For a group association scheme, closed subsets correspond to normal subgroups of $G$, so $\mathfrak X(G)$ is primitive precisely when $G$ is simple \cite{BannaiIto}.
Character tables have been calculated for several families of commutative association schemes associated with finite simple and classical groups \cite{BannaiSubschemes,BannaiKawanakaSongHecke,BannaiHaoSongOrthogonal,BannaiSongHaoWeiClassical,TanakaPGL}.
Thus finite simple groups provide natural primitive examples for this question.
Since the known positive results concern only a few small groups and special families, it is natural to ask whether primitivity forces an association scheme to be uniquely determined up to combinatorial isomorphism by its intersection numbers.
To the best of our knowledge, before the present work no group association scheme of a non-abelian finite simple group was known to fail this property, even when the algebraically isomorphic association scheme is required to be a Cayley association scheme.
The main purpose of this paper is to give such examples uniformly in an infinite family.
Our construction applies to $G=\PSL(2,q)$.
Let $q$ be a prime power and put $m=\frac{q-1}{(2,q-1)}$, where $(a,b)$ denotes the greatest common divisor of $a$ and $b$.
For each $k\in(\mathbb Z/m\mathbb Z)^\times$, we define a Schur partition $\calD_{q,k}$ of $G$ in \cref{sec:psl_star_switching}.

\begin{theorem}
\label{thm:intro-main}
The partition $\calD_{q,k}$ is algebraically isomorphic to the partition of $G$ into conjugacy classes.
Moreover, if $k\not\equiv\pm1\pmod m$, then $\calD_{q,k}$ is not combinatorially isomorphic to this partition.
\end{theorem}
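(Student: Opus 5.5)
The plan is to treat the two assertions separately. Both rest on the explicit shape of $\calD_{q,k}$ from \cref{sec:psl_star_switching}: its cells coincide with conjugacy classes of $G$ except for the split semisimple part, where the cell attached to the class $C_a$ of $\mathrm{diag}(a,a^{-1})$ (for $a$ in the split torus of order $m$) is rebuilt by a ``star switching''. That switching, as I expect it to be defined there, acts through the ordered pair of fixed points on $\Pone$ and the multiplier, twisted by the exponent $k$. I am reading off this shape from the section title and the role of $k\in(\mathbb Z/m\mathbb Z)^\times$, not from the definition itself.

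\textbf{Algebraic isomorphism.} First I will verify that $\calD_{q,k}$ is a Schur partition, i.e.\ closed under inversion and with the span of the simple quantities $\underline{D}$ closed under multiplication in $\mathbb C G$. I then want to show that the natural bijection $C\mapsto D(C)$ preserves all intersection numbers $|\{(x,y)\in C_1\times C_2 : xy=g\}|$ for $g\in C_3$. The route I intend is to compare counts rather than compute characters. The structure constant for classes counts pairs of elements of $G$ with prescribed fixed-point data on $\Pone$ and prescribed multipliers. Each switched cell is built from triples (ordered fixed pair, multiplier) exactly as $C_a$ is, only with the multiplier relabelled by a bijection depending on $k$ and the orientation. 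So I expect the count for $\calD_{q,k}$ to reduce to the count for classes, after a change of variables $a\mapsto a^{\pm k}$ in the torus parameters. For products involving unipotent or nonsplit classes, the counts depend only on the unordered fixed-point configuration. These are unchanged by the switching, so I expect to dispose of those cases directly. Alternatively, I can compute the eigenvalues of the Bose--Mesner algebra of $\calD_{q,k}$: the characters of $\mathbb C G$ restricted to $\underline{D}$ should equal, up to the permutation $\chi\mapsto\chi^{(k)}$ of principal-series characters, the normalized character table $P$. The equality of first eigenmatrices up to reindexing then gives the algebraic isomorphism.

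\textbf{Non-isomorphism for $k\not\equiv\pm1$.} Here I need an invariant of the Cayley scheme that distinguishes the two partitions. My first candidate is the relation graph of one switched cell, compared with that of the corresponding class. I will compute, for example, the number of triangles through an edge in a mixed pattern: triples $x,y,z$ with $x^{-1}y\in D_1$, $y^{-1}z\in D_2$, $x^{-1}z\in D_3$, with the further constraint that $y^{-1}z$ commutes with $x^{-1}y$. Commutation is not a relational invariant, though. A cleaner option is to use automorphism groups. A combinatorial isomorphism would be a permutation $\varphi$ of $G$ carrying each $R_C$ to $R_{D(C)}$. In the class scheme, $\mathrm{Aut}$ contains the left and right translations of $G$ together with $\mathrm{Aut}(G)$. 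I would show that the relations of $\calD_{q,k}$ are invariant under left translation but not under the right-translation action that class relations enjoy, and then show that $\mathrm{Aut}(\calD_{q,k})$ is too small, for instance by showing it has a point stabilizer of smaller order. For $k\equiv\pm1$, the switching is induced by inversion or the identity, which explains the exclusion.

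The main obstacle is this last step. Algebraic equivalence is essentially a bookkeeping identity, whereas ruling out every combinatorial isomorphism requires controlling the full automorphism group, or finding a finer invariant than intersection numbers. I would first try the concrete invariant: the sizes of maximal cliques, or of ``tori'' $\{x t : t\in T\}$, that are recovered as unions of relations. In the class scheme, the cosets of a split torus appear as sets closed under the relations $R_{C_a}$. In $\calD_{q,k}$, the corresponding closed structures should be misaligned whenever $k\ne\pm1$. I would then check that this closure property is preserved by any combinatorial isomorphism.
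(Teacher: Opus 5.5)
The proposal has a genuine gap, and it starts with the construction. You never pin down the switching, and the shape you guess cannot yield the theorem. Suppose the exponent $k$ is applied uniformly to all split elements, as your description suggests. Since $\{s,s^{-1}\}\mapsto\{s^k,s^{-k}\}$, every new cell is then a whole class $C_{\overline{s^k}}$, so the partition is $\mathcal C_G$ itself and the second assertion would be false.

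In $\calD_{q,k}$ the switch is localized at the Borel subgroup $B=G_\infty$. Only $C_{\bar s}\cap B=H_s\cup H_{s^{-1}}$ is replaced by $H_{s^k}\cup H_{s^{-k}}$, where $H_t$ is the coset of the Sylow $p$-subgroup $H_1$ consisting of the maps $z\mapsto tz+a$. The algebraic isomorphism is simply $C_{\bar s}\mapsto D_{\bar s}$ (identity elsewhere) with literally equal structure constants. No permutation $\chi\mapsto\chi^{(k)}$ is involved.

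Consequently the algebraic part is not a change of variables in torus parameters; you must sort the solutions of $xy=z$ by membership in $B$. Inside $B$ the count is easy: $H_sH_t\subseteq H_{st}$ with uniform fibres, and $v\mapsto v^k$ is an automorphism of $\Sq(\F_q^\times)$. The real content is the mixed case. For $g\notin B$ and \emph{every} class $C$ one needs $|\{h\in H_s: gh\in C\}|=|\{h\in H_{s^k}: gh\in C\}|$. This includes unipotent and non-split $C$, contrary to your claim that those cases are untouched.

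The paper gets this from $\Tr(\hat g\hat h)^2=(as+d+c(1-s)\beta)^2/s$, where $a,c,d$ are entries of a lift of $g$ with $c\neq0$. The value distribution of this expression does not depend on the square $s$. A separate argument handles the two unipotent classes, using that $k$ is odd when $-1$ is a square. The remaining case $x,y\notin B$, $z\in H_s$ reduces to the mixed case via transitivity of $B$-conjugation on $H_s$ and a double count (\cref{lem:psl_outside_outside}).

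Your eigenvalue alternative also fails as stated. $\underline{D_{\bar s}}$ is not central in $\mathbb C G$, so $\rho_\chi(\underline{D_{\bar s}})$ need not be scalar, and $\chi(\underline{D_{\bar s}})/\chi(1)$ is in general not an eigenvalue of the S-ring. In any case you would need the Schur property first.

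For non-isomorphism you have no invariant. That $\calD_{q,k}$ lacks the two-sided translation invariance of $\mathfrak X(G)$ does not exclude an isomorphism realized by some other bijection. ``Show that the automorphism group is too small'' merely restates the problem.

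The missing idea is a local graph that sees where the switch happened. Choose $s$ with $s^k\notin\{s,s^{-1}\}$, which is possible exactly when $k\not\equiv\pm1\pmod m$. Consider the graph $\Gamma_{\calU}(D_{\bar s})$ on $D_{\bar s}$, with $a\sim b$ iff $ba^{-1}\in\calU$, the set of nonidentity unipotents. The key lemma (\cref{lem:psl_unipotent_cliques}) says that for $q\ge5$ the $q$-cliques of $\mathrm{Cay}(G,\calU)$ are exactly the left cosets of Sylow $p$-subgroups.

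Then $H_{s^k}=h_{s^k,0}H_1\subseteq D_{\bar s}$ is a $q$-clique. No $x\in C_{\bar s}\setminus B$ lies in a $q$-clique inside $D_{\bar s}$. Along $xP_\alpha$, either the trace varies (if $x$ does not fix $\alpha$), or some $xp$ lands in the removed set $H_s\cup H_{s^{-1}}$ (if it does). Hence $\Gamma_{\calU}(D_{\bar s})$ is not vertex-transitive.

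By contrast, every local graph $\Gamma_Y(C)$ of $\mathfrak X(G)$ is vertex-transitive under conjugation. Local graphs are carried to local graphs by combinatorial isomorphisms (\cref{lem:inv,lem:local-basepoint}), which finishes the argument. Your clique/torus idea points in this direction but proves nothing without a characterization of the maximum cliques. Moreover, the structures that detect the switch are cosets of $H_1$, not cosets of a split torus.
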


This theorem is proved in \cref{sec:psl_star_switching}.
In particular, if $q$ is an odd prime power with $q=11$ or $q\ge17$, or if $q=2^f$ with $f\ge3$, the primitive group association scheme $\mathfrak X(\PSL(2,q))$ has an algebraically isomorphic Cayley association scheme which is not combinatorially isomorphic to it.
Moreover, the Cayley association schemes associated with $\calD_{q,k}$ are non-Schurian.
In other words, $\mathfrak X(\PSL(2,q))$ is not determined up to combinatorial isomorphism by its intersection numbers, even when the competing association scheme is restricted to a Cayley association scheme over the same group.
In particular, $\mathfrak X(\PSL(2,q))$ is non-separable.
The excluded prime powers are exactly those for $q=3,5,7,9,13$ in odd characteristic and $q=2,4$ in even characteristic.

We also construct, for $\mathfrak A_6$ and $\mathfrak A_8$, Schur partitions that are algebraically isomorphic but not combinatorially isomorphic.
Both constructions are obtained by applying \cref{lem:main}.
An exhaustive search over all groups of order at most $200$ finds $256$ nontrivial unordered partitions yielding new twists for $48$ group isomorphism types, with $\mathrm{SmallGroup}(108,15)$ as the smallest example.

The paper is organized as follows.
In \cref{sec:AS} we recall association schemes and group association schemes.
In \cref{sec:Sring} we recall Schur partitions, Cayley association schemes, algebraic isomorphisms, and local graphs.
In \cref{sec:psl_star_switching} we prove the construction theorem for $\PSL(2,q)$.
In \cref{sec:twisted_conjugacy} we give sufficient conditions for replacing two conjugacy classes by two new basic sets without changing the structure constants.
In \cref{sec:X(A_n)} we apply the result in \cref{sec:twisted_conjugacy} to alternating groups; it produces new partitions for $\mathfrak A_6$ and $\mathfrak A_8$, but not for $\mathfrak A_4$, $\mathfrak A_5$, $\mathfrak A_7$, or $\mathfrak A_9$.
In \cref{sec:other_finite_groups_twist} we apply the same construction to every group of order at most $200$.
Finally, in \cref{sec:summary} we summarize the results and list future questions.

\section{Definitions and basic facts on association schemes}
\label{sec:AS}
We recall notation and definitions; see \cite{BannaiIto}.
We write an association scheme using a partition $\{R_i\}_{i\in\mathcal{I}}$ of $X\times X$, that is, a family satisfying $X\times X=\bigcup_{i\in\mathcal{I}}R_i$ and $R_i\cap R_j=\emptyset$ for $i\neq j$.
We write such an association scheme as $\mathfrak{X}=(X,\{R_i\}_{i\in\mathcal{I}})$.

\begin{definition}
Let $X$ be a finite set, let $\mathcal{I}$ be a finite index set with $0\in\mathcal{I}$, and let $\{R_i\}_{i\in\mathcal{I}}$ be a partition of $X\times X$.
We call $\mathfrak{X}=(X,\{R_i\}_{i\in\mathcal{I}})$ an \emph{association scheme} on $X$ if the following conditions are satisfied:
\begin{enumerate}[label=$(AS\arabic*)$]
  \item \label{AS:R0} $R_0=\{(x,x)\mid x\in X\}$.
  \item \label{AS:transpose} For every $i\in\mathcal{I}$, there exists $i^{*}\in\mathcal{I}$ such that
  $R_{i^{*}}=\{(y,x)\mid (x,y)\in R_i\}$.
  \item \label{AS:pijk} For all $i,j,k\in\mathcal{I}$ and all $x,y\in X$ with $(x,y)\in R_k$,
  \[
    p_{ij}^k := \bigl|\{z\in X \mid (x,z)\in R_i,\ (z,y)\in R_j\}\bigr|
  \]
  is independent of the choice of $(x,y)\in R_k$.
\end{enumerate}
The integers $p_{ij}^k$ are called the \emph{intersection numbers}.
In addition,
\begin{enumerate}[label=$(AS\arabic*)$]
  \setcounter{enumi}{3}
  \item \label{AS:commutative} If $p_{ij}^k=p_{ji}^k$ for all $i,j,k\in\mathcal{I}$, then $\mathfrak{X}$ is called \emph{commutative}.
  \item \label{AS:symmetric} If $i^{*}=i$ for all $i\in\mathcal{I}$, then $\mathfrak{X}$ is called \emph{symmetric}.
\end{enumerate}
\end{definition}

We note that every symmetric association scheme is commutative.

\begin{definition}
Let $\mathfrak X=(X,\{R_i\}_{i\in\mathcal I})$ and
$\mathfrak X'=(X',\{R'_i\}_{i\in\mathcal I'})$ be association schemes.
\begin{enumerate}[label=$(\arabic*)$]
  \item A bijection $f:X\to X'$ is called a \emph{combinatorial isomorphism}
  from $\mathfrak X$ to $\mathfrak X'$ if for every $i\in\mathcal I$
  there exists $i'\in\mathcal I'$ such that
  \[
    (x_1,x_2)\in R_i \iff (f(x_1),f(x_2))\in R'_{i'}
    \qquad (x_1,x_2\in X).
  \]
  \item A bijection $\varphi:\mathcal I\to\mathcal I'$ is called an
  \emph{algebraic isomorphism} if, for all $i,j,k\in\mathcal I$,
  \[
    p_{ij}^{k}=p_{\varphi(i)\,\varphi(j)}^{'\varphi(k)},
  \]
  where $p_{ij}^{k}$ and $p_{ab}^{'c}$ are the intersection numbers of
  $\mathfrak X$ and $\mathfrak X'$, respectively.
\end{enumerate}
We say that $\mathfrak X$ and $\mathfrak X'$ are
\emph{combinatorially} (resp. \emph{algebraically}) \emph{isomorphic} if there exists a combinatorial (resp. algebraic) isomorphism
from $\mathfrak X$ to $\mathfrak X'$.
\end{definition}

\begin{remark}
Every combinatorial isomorphism induces an algebraic isomorphism on the relation classes.
An algebraic isomorphism need not be induced by a combinatorial isomorphism, even when its source and target are combinatorially isomorphic.
\end{remark}

An association scheme $\mathfrak X$ is called \emph{separable} if, for every
association scheme $\mathfrak X'$, every algebraic isomorphism from
$\mathfrak X$ to $\mathfrak X'$ is induced by a combinatorial isomorphism
from $\mathfrak X$ to $\mathfrak X'$.

\begin{remark}
Separability is stronger than being uniquely determined up to combinatorial isomorphism by the intersection numbers.
In fact, there are association schemes that are uniquely determined up to combinatorial isomorphism by their intersection numbers but are not separable.
One example is the rank-three association scheme obtained from a doubly regular tournament on $15$ vertices; see \cite{ZivAvEnumeration}.
\end{remark}

\begin{definition}
Let $\mathfrak X=(X,\{R_i\}_{i\in\mathcal I})$ be an association scheme.
A subset $\mathcal I'\subseteq\mathcal I$ with $0\in\mathcal I'$ is called
\emph{closed} if $\bigcup_{j\in\mathcal I'}R_j$ is an equivalence relation on $X$.
The association scheme $\mathfrak X$ is called \emph{primitive} if its only closed subsets are
$\{0\}$ and $\mathcal I$.
\end{definition}

\begin{definition}
For $i\in\mathcal{I}$, let $A_i$ be the adjacency matrix of $R_i$.
The complex vector space
$\mathcal{A}:=\mathrm{span}_{\mathbb{C}}\{A_i\mid i\in\mathcal{I}\}$
is closed under matrix multiplication, and is called the \emph{Bose--Mesner algebra} of $\mathfrak{X}$.
\end{definition}
By definition,
\[
  A_i A_j = \sum_{k\in\mathcal{I}} p_{ij}^k A_k
\]
holds.
This equality means that $\{A_i\}_{i\in\mathcal{I}}$ is a distinguished basis of the Bose--Mesner algebra.

From now on, we assume that $\mathfrak{X}$ is commutative.
Then the Bose--Mesner algebra $\mathcal{A}$ is a commutative semisimple algebra, and the adjacency matrices $A_i$ ($i\in\mathcal{I}$) can be simultaneously diagonalized.
Hence there exists an index set $\mathcal{J}$ and a basis $\{E_j\mid j\in\mathcal{J}\}$ of $\mathcal{A}$ consisting of primitive idempotents.
That is, they satisfy $E_jE_\ell=\delta_{j\ell}E_j$ and $\sum_{j\in\mathcal{J}}E_j=I_X$, and form a basis of pairwise orthogonal idempotents of $\mathcal{A}$.
Here $I_X$ denotes the identity matrix on $X$.
In particular, $\{A_i\}_{i\in\mathcal{I}}$ and $\{E_j\}_{j\in\mathcal{J}}$ are two bases of the same algebra $\mathcal{A}$, and $|\mathcal{I}|=|\mathcal{J}|$.
Since $\{A_i\}_{i\in\mathcal{I}}$ and $\{E_j\}_{j\in\mathcal{J}}$ are bases of $\mathcal{A}$, each $A_i$ can be written as a linear combination of the $E_j$ as follows.
\[
  A_i=\sum_{j\in\mathcal{J}} P_i(j) E_j\qquad (i\in\mathcal{I}).
\]
The matrix
\[
  P=(P_i(j))_{j\in\mathcal{J},\,i\in\mathcal{I}}
\]
is called the \emph{first eigenmatrix}.

\begin{definition}
Let $G$ be a finite group, and let $\{C_i\}_{i\in\mathcal{I}}$ be the set of its conjugacy classes.
Put
\[
R_i:=\{(x,y)\in G\times G\mid yx^{-1}\in C_i\}\qquad (i\in\mathcal{I}).
\]
Then $\{R_i\}_{i\in\mathcal{I}}$ is a partition of $G\times G$, and
$\mathfrak{X}(G):=(G,\{R_i\}_{i\in\mathcal{I}})$ is an association scheme on $G$.
We call it the \emph{group association scheme} $\mathfrak{X}(G)$.
\end{definition}

For a conjugacy class $C_i$, choose $g_i\in C_i$.
The primitive idempotents of the Bose--Mesner algebra of $\mathfrak X(G)$ are indexed by the irreducible characters $\chi\in\operatorname{Irr}(G)$, and, with the above convention for $R_i$, the eigenvalue of $A_i$ on the $\chi$-component is
\[
  P_i(\chi)=\frac{|C_i|\,\chi(g_i^{-1})}{\chi(1)}.
\]
Thus the first eigenmatrix of $\mathfrak X(G)$ is obtained from the ordinary character table of $G$ by multiplying the $C_i$-column by $|C_i|$, dividing the $\chi$-row by $\chi(1)$, and replacing each class by its inverse.
For this reason the first eigenmatrix of an association scheme is also often called its character table.

\begin{definition}
An association scheme $\mathfrak{X}=(X,\{R_i\}_{i\in\mathcal{I}})$ is called \emph{Schurian} if there exists a transitive permutation group $G\le \mathrm{Sym}(X)$ such that $\{R_i\}_{i\in\mathcal{I}}$ is the set of orbitals of $G$, that is, the set of orbits of $G$ on $X\times X$.
\end{definition}

\begin{example}
  \label{ex:group-schurian}
  The group association scheme $\mathfrak{X}(G)$ is always Schurian.
  Indeed, the orbitals of the action of $G\times G$ on $G$ defined by $(a,b)\cdot x:=axb^{-1}$ are precisely the relations $\{R_i\}_{i\in\mathcal{I}}$ of $\mathfrak{X}(G)$.
\end{example}

\begin{lemma}\label{lem:schurian-local-transitive}
Let $\mathfrak{X}=(X,\{R_i\}_{i\in\mathcal{I}})$ be a Schurian association scheme.
Fix $x\in X$, an index $i\in\mathcal{I}$, and a subset $\mathcal{E}\subseteq\mathcal{I}$.
Define a directed graph on
\[
  R_i(x):=\{y\in X\mid (x,y)\in R_i\}
\]
by declaring $(y,z)$ to be an arc whenever $(y,z)\in\bigcup_{j\in\mathcal{E}}R_j$.
Then this directed graph is vertex-transitive.
\end{lemma}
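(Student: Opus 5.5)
The plan is to use the transitive permutation group $G\le\mathrm{Sym}(X)$ from the Schurian hypothesis and show that the stabilizer $G_x$ acts on $R_i(x)$ transitively and by automorphisms of the directed graph. Vertex-transitivity then follows at once.

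First, I will check that $G_x$ preserves $R_i(x)$. Each relation $R_j$ is an orbital of $G$, so it is a $G$-invariant subset of $X\times X$. If $g\in G_x$ and $y\in R_i(x)$, then $(x,y)\in R_i$ gives $(gx,gy)=(x,gy)\in R_i$. Hence $gy\in R_i(x)$, and $G_x$ acts on $R_i(x)$.

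Second, I will check that this action preserves arcs. The arc set is $\bigcup_{j\in\mathcal E}R_j$ restricted to $R_i(x)$, and this union is $G$-invariant because each $R_j$ is. So for $g\in G_x$ and $y,z\in R_i(x)$ we have $(y,z)$ an arc if and only if $(gy,gz)$ is an arc. Every element of $G_x$ therefore induces an automorphism of the directed graph.

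Third, I will prove transitivity on the vertices. Take $y,z\in R_i(x)$. Then $(x,y)$ and $(x,z)$ both lie in $R_i$, which is a single $G$-orbit on $X\times X$. Hence some $g\in G$ satisfies $(gx,gy)=(x,z)$, so $g\in G_x$ and $gy=z$. Thus $G_x$ acts transitively on $R_i(x)$ by graph automorphisms, and the graph is vertex-transitive.

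None of these steps is a real obstacle. The only point needing care is that the orbital $R_i$ is one orbit of pairs, and this is what places the element mapping $y$ to $z$ inside $G_x$. Transitivity of $G$ on $X$ is not needed beyond the definition. If $R_i(x)$ is empty, the statement holds vacuously.
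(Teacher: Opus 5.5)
Your proposal is correct and follows essentially the same route as the paper: the stabilizer $G_x$ preserves $R_i(x)$ and the $G$-invariant arc set $\bigcup_{j\in\mathcal E}R_j$, and it is transitive on $R_i(x)$. The only difference is that the paper quotes the standard fact that the $G_x$-orbits on $X$ are exactly the sets $R_i(x)$. You instead derive the transitivity directly from $R_i$ being a single orbit on pairs.
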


\begin{proof}
Let $G\le\mathrm{Sym}(X)$ be a transitive permutation group whose orbitals are the relations $R_i$.
For fixed $x\in X$, the orbits of the stabilizer $G_x$ on $X$ are exactly the sets $R_i(x)$.
Hence $G_x$ acts transitively on $R_i(x)$.
Moreover, since each $R_j$ is an orbital of $G$, the union $\bigcup_{j\in\mathcal{E}}R_j$ is invariant under $G$, and in particular under $G_x$.
Thus $G_x$ acts on the directed graph by automorphisms.
Therefore the graph is vertex-transitive.
\end{proof}

\section{S-rings}
\label{sec:Sring}
In this section we recall basic facts on Schur partitions, S-rings, and Cayley association schemes over finite groups.
Throughout, $G$ is a finite group and $e$ denotes the identity element of $G$.
For a subset $X\subseteq G$, we write
\[
\underline{X}:=\sum_{x\in X}x
\]
for the corresponding element of the group algebra $\mathbb{C}G$.
Our notation mainly follows \cite{MuzychukPonomarenko}.

\begin{definition}
A partition $\mathcal{S}$ of a finite group $G$ is called a \emph{Schur partition}, or an \emph{S-partition}, if it satisfies the following conditions:
\begin{enumerate}[label=(S\arabic*)]
  \item $\{e\}\in\mathcal{S}$;
  \item if $X\in\mathcal{S}$, then $X^{-1}:=\{x^{-1}\mid x\in X\}\in\mathcal{S}$;
  \item
  $\mathcal{A}(\mathcal{S}):=\operatorname{Span}_{\mathbb{C}}\{\underline{X}\mid X\in\mathcal{S}\}$
  is a subring of $\mathbb{C}G$.
\end{enumerate}
\end{definition}

\begin{definition}
A subring $\mathcal{A}$ of $\mathbb{C}G$ is called an \emph{S-ring} (Schur ring) over $G$ if there exists a Schur partition $\mathcal{S}$ such that
$\mathcal{A}=\mathcal{A}(\mathcal{S})$.
We regard such a Schur partition as the partition associated with $\mathcal{A}$, and call each of its parts a \emph{basic set} of $\mathcal{A}$.
The set of all basic sets is denoted by $\mathcal{S}(\mathcal{A})$.
Conversely, every Schur partition $\mathcal{S}$ gives an S-ring $\mathcal{A}(\mathcal{S})$, and therefore Schur partitions and S-rings are equivalent data.
\end{definition}

\begin{definition}
Let $\mathcal{A}$ be an S-ring over $G$, and let $\mathcal{S}(\mathcal{A})$ be its family of basic sets.
For $X,Y,Z\in\mathcal{S}(\mathcal{A})$, write
\[
\underline{X}\,\underline{Y}
=\sum_{Z\in\mathcal{S}(\mathcal{A})} c_{XY}^{Z}\,\underline{Z}.
\]
The integers $c_{XY}^{Z}$ are called the \emph{structure constants} of $\mathcal{A}$.
\end{definition}

\begin{definition}
An association scheme $\mathfrak{X}=(G,\{R_i\}_{i\in\mathcal{I}})$ on a finite group $G$ is called a \emph{Cayley association scheme} if it is invariant under the right regular action; that is, for every $g\in G$ and every $i\in\mathcal{I}$,
\[
(x,y)\in R_i \iff (xg,yg)\in R_i
\]
holds.
\end{definition}

Let $\mathcal{A}$ be an S-ring over $G$, and let $\mathcal{S}(\mathcal{A})$ be its family of basic sets.
For each $X\in\mathcal{S}(\mathcal{A})$, put
\[
R_X:=\{(x,y)\in G\times G\mid yx^{-1}\in X\}.
\]
Then
\[
\operatorname{Cay}(G,\mathcal{A})
:=(G,\{R_X\}_{X\in\mathcal{S}(\mathcal{A})})
\]
is a Cayley association scheme on $G$.
Conversely, for a Cayley association scheme $\mathfrak{X}=(G,\{R_i\}_{i\in\mathcal{I}})$, put
\[
X_i:=\{x\in G\mid (e,x)\in R_i\}\qquad (i\in\mathcal{I}).
\]
Then $\{X_i\mid i\in\mathcal{I}\}$ is a Schur partition of $G$.
The correspondence
\[
\mathcal{A}\longleftrightarrow \operatorname{Cay}(G,\mathcal{A})
\]
gives a one-to-one correspondence between S-rings over $G$ and Cayley association schemes on $G$.
Moreover, the structure constants of the S-ring determine the intersection numbers of the corresponding Cayley association scheme, up to the order convention
$p_{XY}^{Z}=c_{YX}^{Z}$.
We shall use this correspondence without further comment.
That is, a Schur partition $\mathcal S$ over $G$ will also be regarded as the Cayley association scheme $(G,\{R_X\}_{X\in\mathcal{S}})$.

\begin{example}
  Let $G$ be a finite group, and let $\mathcal{C}_G:=\{C_i\}_{i\in\mathcal{I}}$ be the family of its conjugacy classes.
  Since $\mathcal{C}_G$ gives a Schur partition of $G$, $\mathcal{A}(\mathcal{C}_G)$ is an S-ring over $G$.
  Moreover, the Cayley association scheme $\operatorname{Cay}(G,\mathcal{A}(\mathcal{C}_G))$ coincides with the group association scheme of $G$.
\end{example}

\begin{definition}
Let $\mathcal{A}$ be an S-ring over a group $G$, and let $\mathcal{A}'$ be an S-ring over a group $G'$.
\begin{enumerate}[label=(\arabic*)]
  \item A bijection $f:G\to G'$ is called a \emph{combinatorial isomorphism} from $\mathcal{A}$ to $\mathcal{A}'$ if, for every $X\in\mathcal{S}(\mathcal{A})$, there exists $X^{f}\in\mathcal{S}(\mathcal{A}')$ such that
  \[
    yx^{-1}\in X \iff f(y)f(x)^{-1}\in X^{f}
  \]
  holds for all $x,y\in G$.
  This is equivalent to a combinatorial isomorphism between $\operatorname{Cay}(G,\mathcal{A})$ and $\operatorname{Cay}(G',\mathcal{A}')$.
  \item A bijection between basic sets
  $\varphi:\mathcal{S}(\mathcal{A})\to \mathcal{S}(\mathcal{A}')$
  is called an \emph{algebraic isomorphism} if, for all $X,Y,Z\in\mathcal{S}(\mathcal{A})$,
  $c_{XY}^{Z}=c_{\varphi(X)\varphi(Y)}^{\varphi(Z)}$
  holds.
\end{enumerate}
We say that $\mathcal A$ and $\mathcal A'$ are
\emph{combinatorially} (resp. \emph{algebraically}) \emph{isomorphic} if there exists a combinatorial (resp. algebraic) isomorphism
from $\mathcal A$ to $\mathcal A'$.
We also use the terms combinatorially isomorphic and algebraically isomorphic for Schur partitions.
\end{definition}

Under the correspondence between S-rings and Cayley association schemes, these definitions coincide with the corresponding definitions of algebraic and combinatorial isomorphism for association schemes.

We next define local directed graphs and use them to obtain invariants under combinatorial isomorphism.

\begin{definition}
Let $G$ be a finite group, and let $\mathcal{S}$ be a Schur partition over $G$.
For a basic set $X\in\mathcal{S}$, a union $Y$ of basic sets, and an element $g\in G$, write $R_Y$ for the union of the relations $R_Z$ over the basic sets $Z\subseteq Y$.
Define $\Gamma_Y(X,g)$ to be the directed graph with vertex set $R_X(g)=Xg$
and with an arc $(a,b)$ whenever $(a,b)\in R_Y$, equivalently $ba^{-1}\in Y$.
We call it a \emph{local directed graph} with respect to $\mathcal{S}$.
We write $\Gamma_Y(X)$ for $\Gamma_Y(X,e)$.
In particular, $\Gamma_Y(X)$ is the directed graph with vertex set $X$ in which $(a,b)$ is an arc precisely when $ba^{-1}\in Y$.
\end{definition}

\begin{lemma}\label{lem:local-basepoint}
Let $G$ be a finite group, let $\mathcal{S}$ be a Schur partition over $G$, let $X\in\mathcal S$ be a basic set, and let $Y$ be a union of basic sets.
Then for all $g_1,g_2\in G$, the graphs $\Gamma_{Y}(X,g_1)$ and $\Gamma_{Y}(X,g_2)$ are isomorphic as directed graphs.
\end{lemma}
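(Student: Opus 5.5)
The plan is to write down an explicit isomorphism, namely right multiplication by $g_1^{-1}g_2$. The point is that the relations $R_Z$ of a Cayley scheme are invariant under the right regular action. Concretely, for $x,y\in G$ and any basic set $Z$ we have $(x,y)\in R_Z$ if and only if $yx^{-1}\in Z$. Since $(yh)(xh)^{-1}=yx^{-1}$ for every $h\in G$, this holds if and only if $(xh,yh)\in R_Z$. Taking unions over the basic sets $Z\subseteq Y$ gives the same invariance for $R_Y$.

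Put $h:=g_1^{-1}g_2$ and define $\rho\colon Xg_1\to Xg_2$ by $\rho(a)=ah$.

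\textbf{Step 1: $\rho$ is a bijection.} For $a=xg_1$ with $x\in X$ we get $ah=xg_1g_1^{-1}g_2=xg_2\in Xg_2$. The inverse map is right multiplication by $h^{-1}$, so $\rho$ is a bijection from $R_X(g_1)$ onto $R_X(g_2)$.

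\textbf{Step 2: $\rho$ preserves arcs in both directions.} For $a,b\in Xg_1$,
\[
  (\rho(b))(\rho(a))^{-1}=bhh^{-1}a^{-1}=ba^{-1}.
\]
Hence $(a,b)$ is an arc of $\Gamma_Y(X,g_1)$ exactly when $ba^{-1}\in Y$, which happens exactly when $(\rho(a),\rho(b))$ is an arc of $\Gamma_Y(X,g_2)$. So $\rho$ is an isomorphism of directed graphs.

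There is no real obstacle in this argument. The only thing to check is the convention that the vertex set is $R_X(g)=\{y\mid (g,y)\in R_X\}=\{y\mid yg^{-1}\in X\}=Xg$. With that convention, right translation (rather than left) is the map that both carries one vertex set onto the other and preserves the arc condition $ba^{-1}\in Y$. In particular, every $\Gamma_Y(X,g)$ is isomorphic to $\Gamma_Y(X)=\Gamma_Y(X,e)$.
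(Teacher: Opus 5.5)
Your proof is correct and is the paper's argument: the paper uses the same right-translation map $\phi(a)=ag_1^{-1}g_2$ from $Xg_1$ onto $Xg_2$. It also uses the same identity $\phi(b)\phi(a)^{-1}=ba^{-1}$ to show that arcs are preserved in both directions.
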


\begin{proof}
Consider the map $\phi:Xg_1\longrightarrow Xg_2$ defined by $\phi(a):=ag_1^{-1}g_2$.
Indeed, for every $xg_1\in Xg_1$ we have $\phi(x g_1)=xg_2\in Xg_2$, so this is a well-defined map.
It is immediate from the definition that $\phi$ is a bijection.
Moreover, for $x_1 g_1, x_2 g_1\in Xg_1$,
we have $(x_2 g_1)(x_1 g_1)^{-1}=\phi(x_2 g_1)\phi(x_1 g_1)^{-1}=x_2x_1^{-1}$.
Therefore $(x_1 g_1,x_2 g_1)$ is an arc of $\Gamma_Y(X,g_1)$ if and only if
$(\phi(x_1 g_1),\phi(x_2 g_1))$ is an arc of $\Gamma_Y(X,g_2)$.
Thus $\phi$ gives an isomorphism of directed graphs from $\Gamma_{Y}(X,g_1)$ to $\Gamma_{Y}(X,g_2)$.
\end{proof}

\begin{lemma}\label{lem:inv}
Let $\mathcal{A}$ be an S-ring over $G$, let $\mathcal{A}'$ be an S-ring over $G'$, and suppose that $f:G\to G'$ is a combinatorial isomorphism from $\mathcal{A}$ to $\mathcal{A}'$.
Let $X\in\mathcal{S}(\mathcal{A})$, and let $Y$ be a union of basic sets of $\mathcal{A}$.
Write $X^f$ for the corresponding basic set of $\mathcal{A}'$, and write $Y^f$ for the union of the corresponding basic sets of $\mathcal{A}'$.
Then for every $g\in G$,
\[
  \Gamma_{Y}(X,g)\cong \Gamma_{Y^f}(X^f,f(g))
\]
as directed graphs.
Consequently, the numbers of weakly and strongly connected components of $\Gamma_{Y}(X)$, as well as its number of vertices, are invariants of the Cayley association scheme obtained from the Schur partition under combinatorial isomorphism.
\end{lemma}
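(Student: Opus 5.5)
The plan is to construct the graph isomorphism explicitly from $f$ itself. There is one subtlety to handle first. The combinatorial isomorphism $f$ is only required to satisfy $yx^{-1}\in X \iff f(y)f(x)^{-1}\in X^f$ for each basic set $X$; it need not be a group homomorphism, and it need not send $Xg$ onto $X^f f(g)$ because of some algebraic identity.

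\emph{Step 1: vertex sets.} For $g\in G$ we have $R_X(g)=\{a\in G\mid ag^{-1}\in X\}=Xg$. Applying the defining property of $f$ with $x=g$ and $y=a$ gives
\[
ag^{-1}\in X \iff f(a)f(g)^{-1}\in X^f .
\]
Hence $f$ maps $Xg$ into $X^f f(g)$. Since $f$ is a bijection $G\to G'$, the same equivalence read backwards shows that every $b\in X^f f(g)$ has preimage $f^{-1}(b)\in Xg$. So the restriction $f|_{Xg}:Xg\to X^f f(g)$ is a bijection.

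\emph{Step 2: arcs.} Let $a,b\in Xg$. The pair $(a,b)$ is an arc of $\Gamma_Y(X,g)$ exactly when $ba^{-1}\in Z$ for some basic set $Z\subseteq Y$. By the defining property of $f$ applied to each such $Z$, this holds exactly when $f(b)f(a)^{-1}\in Z^f$ for some such $Z$, that is, when $f(b)f(a)^{-1}\in Y^f$. Two facts are needed here. First, $Z\mapsto Z^f$ is well defined, because the basic sets of $\mathcal A'$ partition $G'$ and $f(y)f(x)^{-1}$ determines which one is meant. Second, $Z\mapsto Z^f$ is injective, because $f$ is a bijection and basic sets are nonempty. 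It follows that $Y^f$ is exactly the disjoint union of the images of the basic sets in $Y$, and no other basic set of $\mathcal A'$ appears in it. Therefore $f|_{Xg}$ preserves arcs and non-arcs, and so it is a directed-graph isomorphism $\Gamma_Y(X,g)\cong\Gamma_{Y^f}(X^f,f(g))$.

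\emph{Step 3: consequence.} Graph isomorphism preserves the number of vertices and the numbers of weakly and strongly connected components. By \cref{lem:local-basepoint}, the isomorphism type of $\Gamma_{Y^f}(X^f,f(g))$ does not depend on the base point, so we may take $g=e$ and compare $\Gamma_Y(X)$ with $\Gamma_{Y^f}(X^f)$. These quantities are therefore invariants under combinatorial isomorphism, once $X$ and $Y$ are transported along the induced bijection of basic sets.

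The only step needing care is Step 2, namely checking that $Y^f$ is exactly the union of the $Z^f$ and that the correspondence $Z\mapsto Z^f$ is injective. Everything else is a direct unwinding of the definitions.
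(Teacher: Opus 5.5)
Your proof is correct and follows the paper's argument: restrict $f$ to $Xg$, apply the defining biconditional with $x=g$ to see that $f$ maps $Xg$ bijectively onto $X^f f(g)$, and apply it again to each basic set $Z\subseteq Y$ to see that arcs and non-arcs are preserved. You also fill in details the paper leaves implicit, namely surjectivity onto $X^f f(g)$ and the appeal to \cref{lem:local-basepoint} to pass from the base point $f(e)$ to $e$ in the ``consequently'' part. One simplification: the injectivity of $Z\mapsto Z^f$ that you flag as the delicate point in Step 2 is not needed, because the reverse implication of the defining biconditional already gives $f(b)f(a)^{-1}\in Z^f\Rightarrow ba^{-1}\in Z\subseteq Y$.
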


\begin{proof}
By the definition of combinatorial isomorphism, for all $a,b\in G$ we have
\[
  bg^{-1}\in X \iff f(b)f(g)^{-1}\in X^f.
\]
Thus $f$ maps the vertex set $Xg$ to $X^f f(g)$.
Moreover,
\[
  ba^{-1}\in Y \iff f(b)f(a)^{-1}\in Y^f,
\]
and so $f$ also preserves the arc relation.
Therefore $f|_{Xg}$ gives an isomorphism of local directed graphs.
\end{proof}

\section{The construction of the twisted group association scheme for \texorpdfstring{$\PSL(2,q)$}{PSL(2,q)}}
\label{sec:psl_star_switching}

In this section we prove the main construction of the paper.
Throughout this section, let $q$ be a prime power and put
\[
  G=\PSL(2,q)=\SL(2,q)/\{\pm I\}=\{[M]\mid M\in\SL(2,q)\}.
\]
Here $[M]=\{M,-M\}$, and we call the elements of $\{M,-M\}$ lifts of $[M]$.
In what follows, matrix representatives in $\SL(2,q)$ are displayed with parentheses, while elements of $\PSL(2,q)$ are displayed with brackets.
We also write
\[
  \Pone(\F_q)=\{[x:y]\mid (x,y)\in\F_q^2\setminus\{(0,0)\}\}.
\]
Here $[x:y]$ denotes the one-dimensional subspace spanned by the column vector $\begin{pmatrix}x\\y\end{pmatrix}$.
Let
\[
  \Sq(\F_q^\times)=\{a^2\mid a\in\F_q^\times\}
\]
be the image of the squaring map on $\F_q^\times$.
Then
\[
|\Sq(\F_q^\times)|=\frac{q-1}{(2,q-1)}
\]
is well known.
Thus, if $q$ is even, then the squaring map is an automorphism of $\F_q^\times$, so $\Sq(\F_q^\times)=\F_q^\times$; if $q$ is odd, then $\Sq(\F_q^\times)$ is the subgroup of squares.
Put $m=|\Sq(\F_q^\times)|$.

The group $G$ has the natural transitive left action on $\Pone(\F_q)$ given by
\[
\begin{bmatrix}
    a & b \\
    c & d
\end{bmatrix}
\cdot[x:y]
=
[ax+by : cx+dy].
\]

We recall the standard classification of conjugacy classes in $\PSL(2,q)$ which will be used below
\cite{FH1991,HuppertBlackburn,DixonMortimer}.
Let $g\in G$, choose a lift $\hat g\in\SL(2,q)$, and put
\[
  f_{\hat g}(X)=X^2-(\Tr \hat g)X+1.
\]
When $q$ is odd, the lift is determined only up to sign, but $(\Tr \hat g)^2$ is determined by $g$.
The conjugacy classes are of the following four types.
\begin{enumerate}[label=(\roman*)]
  \item The identity class.  This corresponds to $g=[I]$.

  \item The nonidentity unipotent classes.  These are the cases in which $\hat g\ne\pm I$ and $(\Tr \hat g)^2=4$.
  If $q$ is even, the nonidentity unipotent elements form one conjugacy class, represented by an element $u$ whose lift is
  \[
    \hat u=\begin{pmatrix}1&1\\0&1\end{pmatrix}.
  \]
  If $q$ is odd, they split into two conjugacy classes.
  Fix a nonsquare $d_0\in\F_q^\times$.  Representatives $u_1,u_{d_0}$ may be chosen with lifts
  \[
    \hat u_1=\begin{pmatrix}1&1\\0&1\end{pmatrix},
    \qquad
    \hat u_{d_0}=\begin{pmatrix}1&d_0\\0&1\end{pmatrix}.
  \]
  For a general unipotent element, after replacing $\hat g$ by $-\hat g$ if necessary, assume that $\Tr \hat g=2$.
  Choose a nonzero vector $v\in\F_q^2$ with $(\hat g-I)v=0$, and then choose $w\in\F_q^2$ with $\det(v,w)=1$.
  Then
  \[
    (\hat g-I)w=av
  \]
  for a uniquely determined $a\in\F_q^\times$, and the square class of $a$ determines the conjugacy class.
  This square class is independent of the auxiliary choices: replacing $v$ by $\lambda v$ and $w$ by $\lambda^{-1}w$, with $\lambda\in\F_q^\times$, replaces $a$ by $\lambda^{-2}a$.

  \item The split semisimple classes.  These are the cases in which $f_{\hat g}$ has two distinct roots over $\F_q$.
  Let the roots of $f_{\hat g}$ be $r,r^{-1}\in\F_q^\times$.
  A representative $t_r$ may be chosen with lift
  \[
    \hat t_r=\begin{pmatrix}r&0\\0&r^{-1}\end{pmatrix}.
  \]
  Put $s=r^2\in \Sq(\F_q^\times)$, and write $\bar s=\{s,s^{-1}\}$.
  We denote the conjugacy class containing such elements by $C_{\bar s}$.
  When $q$ is odd, note that
  \[
    (\Tr \hat g)^2=(r+r^{-1})^2=s+s^{-1}+2.
  \]

  \item The non-split semisimple classes.  These are the cases in which $f_{\hat g}$ is irreducible over $\F_q$.
  A representative $n_{a_0}$ may be chosen with lift
  \[
    \hat n_{a_0}=\begin{pmatrix}0&-1\\1&a_0\end{pmatrix},
  \]
  where $f_{a_0}(X)=X^2-a_0X+1$ is irreducible over $\F_q$.
\end{enumerate}
Thus the conjugacy class of a given element can be read off from $\Tr \hat g$ as follows.
When $q$ is odd, we use $(\Tr \hat g)^2$ to remove the sign ambiguity in the lift.
If $(\Tr \hat g)^2=4$, then the element lies in the identity class if $\hat g$ is scalar, and otherwise it is unipotent.
In the latter case, the two classes are distinguished by the square class of $a$ above.
If $(\Tr \hat g)^2\ne4$, then the element is semisimple; it is split semisimple if $(\Tr \hat g)^2-4$ is a square in $\F_q$, and non-split semisimple if $(\Tr \hat g)^2-4$ is a nonsquare.
In the split case, if the roots are $r,r^{-1}\in\F_q^\times$ and $s=r^2$, then the class is $C_{\bar s}$.

When $q$ is even, $\Tr \hat g$ itself is determined by the element of $G$.
If $\Tr \hat g=0$, then the element is the identity if $\hat g=I$, and otherwise lies in the unique nonidentity unipotent class.
If $\Tr \hat g\ne0$, then the element is semisimple; it is split semisimple if
\[
  f_{\hat g}(X)=X^2+(\Tr \hat g)X+1
\]
splits over $\F_q$, and non-split semisimple otherwise.

Fix $\infty:=[1:0]\in\Pone(\F_q)$,
and write its stabilizer as
\[
  B=G_\infty=\left\{\begin{bmatrix}
    \ell & c \\
    0 & \ell^{-1}
  \end{bmatrix}\mid \ell\in\F_q^\times,\ c\in\F_q\right\}.
\]
In the affine coordinate on $\Pone(\F_q)\setminus\{\infty\}$, the action of $B$ is
\[
  \begin{bmatrix}
    \ell & c \\
    0 & \ell^{-1}
  \end{bmatrix}
  \cdot[z:1]
=[\ell z+c:\ell^{-1}]
=[\ell^2z+c\ell:1].
\]
Thus $B$ determines affine maps of the form $z\mapsto \ell^2z+c\ell$ on $\F_q$.
Conversely, for any $s\in \Sq(\F_q^\times)$ and $a\in\F_q$, one can choose an element of $B$ inducing the affine map $z\mapsto sz+a$.
Indeed, choose $\ell\in\F_q^\times$ with $s=\ell^2$, and define
\[
  h_{s,a}:=
  \begin{bmatrix}
    \ell & a\ell^{-1} \\
    0 & \ell^{-1}
\end{bmatrix}.
\]
This is independent of the choice of $\ell$ and induces $z\mapsto sz+a$.
In this way we have the unique expression
\[
  B=\{h_{s,a}\mid s\in \Sq(\F_q^\times),\ a\in\F_q\}.
\]
The multiplication is given by
\begin{equation}
\label{eq:psl-borel-mult}
  h_{s,a}h_{t,b}=h_{st,a+sb}.
\end{equation}

For $s\in \Sq(\F_q^\times)$, define
\[
  H_s=\{h_{s,a}\mid a\in\F_q\}\subseteq B.
\]
Then $H_1$ is a Sylow $p$-subgroup of $G$ of order $q$, and, for $s\ne1$, $H_s$ consists of split semisimple elements in the Borel subgroup.
For $s\ne1$, we may reparametrize $a=(1-s)\beta$ gives
\[
  h_{s,(1-s)\beta}:z\longmapsto sz+(1-s)\beta,
  \qquad \beta\in\F_q,
\]
where $\beta$ is the other fixed point.
As $\beta$ runs over $\F_q$, these are exactly the elements of $H_s$.
Then
\begin{equation}
\label{eq:psl-class-borel-intersection}
  C_{\bar s}\cap B=H_s\cup H_{s^{-1}}.
\end{equation}
To see this, let $g\in C_{\bar s}\cap B$, and take a lift $\hat g\in\SL(2,q)$.
Since $g\in B$, the lift $\hat g$ may be chosen upper triangular.
If its diagonal entries are $\ell,\ell^{-1}$, then regardless of which diagonal entry occurs in the $(1,1)$-position, we have $\{\ell^2,\ell^{-2}\}=\bar s$.
Thus $g\in H_s\cup H_{s^{-1}}$.
Conversely, if $g\in H_s\cup H_{s^{-1}}$, then $g\in B$ by definition.
Moreover, if the diagonal entries of a lift of $g$ are $\ell,\ell^{-1}$, then $g\in H_s\cup H_{s^{-1}}$ means precisely that $\{\ell^2,\ell^{-2}\}=\bar s$.
Hence $g\in C_{\bar s}$, proving \cref{eq:psl-class-borel-intersection}.
Thus \cref{eq:psl-class-borel-intersection} precisely describes the intersection of each split semisimple conjugacy class with the Borel subgroup $B$.
For the other types of conjugacy classes, the identity class meets $B$ only in the identity, the unipotent classes meet $B$ in subsets of $H_1$, and the non-split semisimple classes do not meet $B$.

Choose $k\in(\mathbb Z/m\mathbb Z)^\times$.
Since $\Sq(\mathbb F_q^\times)$ is cyclic of order $m$ and $(k,m)=1$, the map $s\mapsto s^k$ is a permutation of $\Sq(\F_q^\times)$.
Using this permutation, define a new partition $\calD_{q,k}$ of $G$ as follows.
Leave the identity class, the unipotent classes, and the non-split semisimple classes unchanged.
For each split label $\bar s=\{s,s^{-1}\}$, replace $C_{\bar s}$ by
\begin{equation}
\label{eq:psl-switched-part}
  D_{\bar s}
  =\bigl(C_{\bar s}\setminus (H_s\cup H_{s^{-1}})\bigr)
    \cup H_{s^k}\cup H_{s^{-k}}.
\end{equation}
Since the power map permutes the unordered pairs $\{s,s^{-1}\}$, these sets together with the unchanged conjugacy classes form a partition of $G$.

\begin{theorem}
\label{thm:psl_star_switching}
Let $q$ be a prime power and let $G=\PSL(2,q)$.
For every $k\in(\mathbb Z/m\mathbb Z)^\times$,
the partition $\calD_{q,k}$ is a Schur partition of $G$, and $\calD_{q,k}$ is algebraically isomorphic to $\mathcal{C}_G$, the partition of $G$ into conjugacy classes.
Moreover, if $k\not\equiv\pm1\pmod m$,
then $\calD_{q,k}$ is not combinatorially isomorphic to $\mathcal{C}_G$.
\end{theorem}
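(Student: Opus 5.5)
The plan is to work entirely in the group algebra $\mathbb C G$. Put
\[
  \varepsilon_{\bar s}:=\underline{H_{s^k}}+\underline{H_{s^{-k}}}-\underline{H_s}-\underline{H_{s^{-1}}},
\]
so that $\underline{D_{\bar s}}=\underline{C_{\bar s}}+\varepsilon_{\bar s}$. Let $\Phi$ be the linear map sending each unchanged class to itself and $\underline{C_{\bar s}}$ to $\underline{D_{\bar s}}$. It suffices to prove that $\Phi$ is multiplicative on $\mathcal A(\mathcal C_G)$. Then $\mathcal A(\calD_{q,k})=\Phi(\mathcal A(\mathcal C_G))$ is a subring with the same structure constants. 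Condition (S2) is immediate, since $h_{s,a}^{-1}=h_{s^{-1},\ast}$ and $H_s^{-1}=H_{s^{-1}}$. Hence $\calD_{q,k}$ is a Schur partition and $C\mapsto\Phi(C)$ is an algebraic isomorphism.

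Expanding products, multiplicativity of $\Phi$ reduces to identities of the form
\[
  \underline{C}\,\varepsilon_{\bar t}+\varepsilon_{\bar s}\,\underline{C'}+\varepsilon_{\bar s}\varepsilon_{\bar t}
  =\sum_{\bar u}c^{\bar u}\varepsilon_{\bar u}
\]
for the relevant classes $C,C'$, together with the versions in which one factor is an unchanged class. I will prove these in two steps.

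The first step treats the part of $\mathbb C G$ outside $B$. Fix $g\notin B$ and write $h=h_{s,a}$. The trace of a lift of $gh^{-1}$ is an affine function of $a$ with slope a nonzero multiple of the $(2,1)$-entry of $\hat g$. Hence, as $h$ runs over $H_s$, the traces of $gh^{-1}$ run over $\F_q$ (up to the sign normalisation when $q$ is odd). I then need to check that the number of $h\in H_s$ with $gh^{-1}$ in a given class is independent of $s$. For semisimple classes this follows from the trace computation. The unipotent classes, where trace does not distinguish the two classes, need a direct check using the square class of $a$ from item (ii). Granting this, the coefficient of $g\notin B$ in $\underline{C}\,\varepsilon_{\bar t}$ vanishes, because the $+$ and $-$ parts cancel. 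The same holds on the left, and trivially for $\varepsilon\varepsilon$, which is supported in $B$.

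The second step works inside $B$. By \cref{eq:psl-borel-mult} we have $\underline{H_s}\,\underline{H_t}=q\,\underline{H_{st}}$. So the span of the $\underline{H_s}$ is $q$ times a copy of the group algebra of the cyclic group $\Sq(\F_q^\times)$, and $s\mapsto s^k$ is an automorphism of it. Similarly, $\underline{C}\,\underline{H_t}$ restricted to $B$ is determined by the elements of $C\cap B$, listed in \cref{eq:psl-class-borel-intersection}. It can be written as a combination of $\underline{H_u}$ and of unipotent or identity pieces in $H_1$. Comparing both sides of the identity then becomes a bookkeeping identity in this cyclic group algebra, which the automorphism $s\mapsto s^k$ respects. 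I expect this step, together with the unipotent subcase of the first step, to be the main obstacle. The concern is that the $B$-parts of classes in $\mathcal C_G$-products must match the twisted decomposition exactly, including the unipotent and identity coefficients.

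For non-isomorphism I use \cref{lem:inv} together with \cref{lem:schurian-local-transitive}. By \cref{ex:group-schurian}, $\mathcal C_G$ is Schurian, so every local graph $\Gamma_Y(C)$ is vertex-transitive. Take $Y=\{e\}\cup H_1$, which is a union of basic sets in both partitions, and consider $\Gamma_Y(D_{\bar s})$. On $H_{s^k}\subseteq D_{\bar s}$ every quotient $ba^{-1}$ lies in $H_1$, so these $q$ vertices span a complete subgraph. On the other hand, for $x\in C_{\bar s}\setminus(H_s\cup H_{s^{-1}})$ the $Y$-neighbours of $x$ are the elements of $D_{\bar s}$ sharing a fixed point with $x$ and having the same multiplier there. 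Now choose $\bar s$ with $\bar s\ne\overline{s^k}$, which is possible exactly when $k\not\equiv\pm1\pmod m$. Then for such $x$ fixing $\infty$ with multiplier outside $\{s^{\pm1}\}$, this out-degree differs from that of vertices in $H_{s^k}$. So $\Gamma_Y(D_{\bar s})$ is not vertex-transitive, and therefore $\calD_{q,k}$ is not combinatorially isomorphic to $\mathcal C_G$.
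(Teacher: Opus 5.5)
Your non-isomorphism argument fails at the first step, because $Y=\{e\}\cup H_1$ is not a union of basic sets of either partition. The unipotent basic sets are whole conjugacy classes of size $(q^2-1)/(2,q-1)$, whereas $H_1\setminus\{e\}$ consists only of the $q-1$ unipotent elements fixing $\infty$. So \cref{lem:inv} does not apply, and nothing forces $\Gamma_Y(C)$ to be vertex-transitive in $\mathcal C_G$. Switching to the genuine union $\{e\}\cup\calU$ cannot rescue a degree count either. For a basic set $X$ and a union $Y$ of basic sets, the out-degree of $x$ in $\Gamma_Y(X)$ is the coefficient of $x$ in $\underline{Y^{-1}}\,\underline{X}\in\mathcal A(\mathcal S)$, so it is constant on $X$. (Also, the vertices $x\in C_{\bar s}\setminus(H_s\cup H_{s^{-1}})$ lie outside $B$, so they do not fix $\infty$.) You need a finer invariant. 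The paper uses $q$-cliques in $\Gamma_{\calU}(D_{\bar s})$. By \cref{lem:psl_unipotent_cliques} these are exactly left cosets $xP_\alpha$ of Sylow $p$-subgroups, and $H_{s^k}$ is one of them. No $x\in C_{\bar s}\setminus B$ lies in one: if $x$ does not fix $\alpha$, then $xP_\alpha\subseteq D_{\bar s}$ contradicts a trace count. If $x$ fixes $\alpha$, some $xp$ with $p\in P_\alpha$ lands in $C_{\bar s}\cap B=H_s\cup H_{s^{-1}}$, which was removed from $D_{\bar s}$.

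The algebraic part also has a gap. Inside $B$, your identity is not bookkeeping in the group algebra of $\Sq(\F_q^\times)$. The coefficients $c^{\bar u}$ on the right are global structure constants, and they include factorizations $z=xy$ with $x,y\notin B$. Let $O(u)$ be this outside--outside count at $z\in H_u$. Then the comparison at $z\in H_{u_0^k}$ requires $O(u_0)=O(u_0^k)$. This is clearest when both factors are unchanged, a case missing from your list. For a non-split class $C$ you have $\Phi(\underline{C})\Phi(\underline{C})=\underline{C}\,\underline{C}$, so multiplicativity says exactly $c_{CC}^{C_{\bar u}}=c_{CC}^{C_{\overline{u^k}}}$, while $C\cap B=\emptyset$ gives nothing to compare inside $B$. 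This is \cref{lem:psl_outside_outside}. The paper derives it from the mixed count by double counting the triples $(x,y,z)$ with $z\in H_u$, summing over $x\in C_i\setminus B$ and applying the mixed count to $x^{-1}$. Your mixed count as stated is also false. If $q\equiv1\pmod 4$, both unipotent products $gh^{-1}$ lie in the class given by the square class of $c\ell$ with $\ell^2=s$, and that depends on whether $s$ is a fourth power. Only invariance under $s\mapsto s^k$ holds. It needs $k$ odd, which is automatic because $m=(q-1)/2$ is then even.
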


\begin{corollary}
\label{cor:psl_prime_power_family}
For every odd prime power $q$ with $q=11$ or $q\ge17$, and for every even prime power $q=2^f$ with $f\ge3$, the primitive group association scheme $\mathfrak X(\PSL(2,q))$ is not determined up to combinatorial isomorphism by its intersection numbers.
More precisely, it admits an algebraically isomorphic Cayley association scheme over the same group that is not combinatorially isomorphic to it.
In particular, it is non-separable.
\end{corollary}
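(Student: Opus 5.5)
The plan is to deduce the corollary from \cref{thm:psl_star_switching}. It suffices to exhibit, for every listed $q$, a unit $k\in(\mathbb Z/m\mathbb Z)^\times$ with $k\not\equiv\pm1\pmod m$, where $m=\frac{q-1}{(2,q-1)}$. We then check the remaining words in the statement: primitivity, the Cayley property, and non-separability.

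\textbf{Arithmetic step.} A unit $k\not\equiv\pm1$ exists exactly when $\varphi(m)>2$. The residues $\pm1$ give at most two units, and they coincide when $m\le2$. By the elementary fact that $\varphi(m)\le2$ holds only for $m\in\{1,2,3,4,6\}$, the unit exists for every other $m$.
\begin{itemize}
\item For odd $q$ we have $m=(q-1)/2$, and $m\in\{1,2,3,4,6\}$ corresponds exactly to $q\in\{3,5,7,9,13\}$. For all other odd prime powers, namely $q=11$ (where $m=5$) and $q\ge17$ (where $m\ge8$), we have $\varphi(m)\ge4$. Since the prime powers in this range are $11,17,19,23,25,27,\dots$, the case $q=11$ and the case $q\ge17$ cover everything not excluded.
\item For $q=2^f$ we have $m=q-1$, which is odd. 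Then $m\in\{1,3\}$ exactly for $f=1,2$, and $m\ge7$ with $\varphi(m)\ge6$ once $f\ge3$. For example, $k=2$ works, since $2\not\equiv\pm1\pmod m$ when $m\ge7$ is odd.
\end{itemize}

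\textbf{Applying the theorem.} With such a $k$, \cref{thm:psl_star_switching} shows that $\calD_{q,k}$ is a Schur partition of $G$. It also gives an algebraic isomorphism $\mathcal C_G\to\calD_{q,k}$, while no combinatorial isomorphism $\mathcal C_G\to\calD_{q,k}$ exists. Under the correspondence of \cref{sec:Sring}, $\calD_{q,k}$ determines the Cayley association scheme $\operatorname{Cay}(G,\mathcal A(\calD_{q,k}))$ over the same group. Since $\operatorname{Cay}(G,\mathcal A(\mathcal C_G))=\mathfrak X(G)$, this Cayley scheme has the same intersection numbers as $\mathfrak X(G)$ after relabeling, but is not combinatorially isomorphic to it. Hence $\mathfrak X(\PSL(2,q))$ is not determined by its intersection numbers.

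\textbf{Non-separability and primitivity.} The algebraic isomorphism above cannot be induced by any combinatorial isomorphism, because none exists. Hence $\mathfrak X(G)$ is non-separable. For primitivity, every listed $q$ satisfies $q\ge8$, so $\PSL(2,q)$ is simple. Closed subsets of $\mathfrak X(G)$ correspond to normal subgroups of $G$, so the scheme is primitive.

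The main obstacle is the number-theoretic classification $\varphi(m)\le2\iff m\in\{1,2,3,4,6\}$. I would prove it directly from the multiplicativity of $\varphi$: any prime $p\ge5$ dividing $m$ contributes a factor $p-1\ge4$, $9\mid m$ contributes $6$, and $8\mid m$ contributes $4$. Everything else is a direct appeal to \cref{thm:psl_star_switching}.
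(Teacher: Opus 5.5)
Your proposal is correct and follows the paper's proof essentially verbatim: you reduce to the existence of a unit $k\not\equiv\pm1\pmod m$, identify the failures as $\varphi(m)\le2$, i.e.\ $m\in\{1,2,3,4,6\}$ (giving $q=3,5,7,9,13$ and $q=2,4$), and then invoke \cref{thm:psl_star_switching} together with the simplicity of $\PSL(2,q)$ for primitivity. One small point in your sketch of the $\varphi$-classification: your three exclusions only reduce to $m\mid 12$, so you must also discard $m=12$ separately, using $\varphi(12)=4$.
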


\begin{proof}
There is no $k\in(\mathbb Z/m\mathbb Z)^\times$ with $k\not\equiv\pm1\pmod m$ if and only if $m=1,2,3,4,6$.
Indeed, the nonexistence of such a $k$ is equivalent to every element of $(\mathbb Z/m\mathbb Z)^\times$ being equal to $\pm1$.
This is equivalent to $|(\mathbb Z/m\mathbb Z)^\times|\le2$, or $\varphi(m)\le2$, where $\varphi$ is Euler's phi function.
The positive integers with $\varphi(m)\le2$ are exactly $m=1,2,3,4,6$.
In odd characteristic these give $q=3,5,7,9,13$, and in even characteristic they give $q=2,4$.
Thus, for every odd prime power $q$ with $q=11$ or $q\ge17$, and every even prime power $q=2^f$ with $f\ge3$, there exists $k\in(\mathbb Z/m\mathbb Z)^\times$ such that $k\not\equiv\pm1\pmod m$.
For these parameters $\PSL(2,q)$ is simple, so the corresponding group association scheme is primitive.
The assertion follows from \cref{thm:psl_star_switching}.
\end{proof}

We call $\calD_{q,k}$ a \emph{twisted partition} of $\mathcal{C}_G$.
We call the Cayley association scheme obtained from $\calD_{q,k}$ a \emph{twisted group association scheme} of $\mathfrak X(\PSL(2,q))$.
We prove \cref{thm:psl_star_switching} in the rest of this section.

\subsection{Algebraic isomorphism}
In this subsection, we establish the counting lemmas needed for the first assertion of \cref{thm:psl_star_switching} and conclude by proving that $\calD_{q,k}$ is a Schur partition algebraically isomorphic to $\mathcal{C}_G$.

\begin{lemma}
\label{lem:psl_borel_counts}
Fix $s,t,u\in \Sq(\F_q^\times)$.
For every subset $A\subset H_s$ and every $z\in H_u$, we have
\[
  |\{(x,y)\in A\times H_t \mid xy=z\}|
  =
  \begin{cases}
    |A| & \text{if $u=st$;} \\
    0 & \text{if $u\neq st$.}
  \end{cases}
\]
\end{lemma}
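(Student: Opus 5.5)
The plan is to use the multiplication rule \cref{eq:psl-borel-mult}, which says that left multiplication and the first coordinate behave like a homomorphism $B\to\Sq(\F_q^\times)$, $h_{s,a}\mapsto s$. This map has kernel $H_1$, and the sets $H_s$ are exactly its fibres, that is, the cosets of $H_1$ in $B$.

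The count is organised by the first factor $x$. Fix $x=h_{s,a}\in A$. Since $B$ is a group and $x,z\in B$, there is exactly one $y\in G$ with $xy=z$, namely $y=x^{-1}z$, and this $y$ lies in $B$.

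It remains to decide when $y\in H_t$. Write $y=h_{t',b}$ using the unique expression $B=\{h_{s,a}\}$. By \cref{eq:psl-borel-mult},
\[
  z=xy=h_{st',\,a+sb}.
\]
Since $z\in H_u$ and the expression in $B$ is unique, this gives $st'=u$, so $t'=s^{-1}u$. Hence $y\in H_t$ if and only if $t=s^{-1}u$, that is, $u=st$.

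This condition depends only on $s,t,u$ and not on the particular $x\in A$ or on $z$. So if $u=st$, every $x\in A$ contributes exactly one pair, giving $|A|$ pairs in total. If $u\neq st$, no $x$ contributes, and the count is $0$.

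There is no real obstacle here. The only point needing care is the uniqueness of the parametrisation $h_{s,a}$, in particular that $h_{s,a}$ is well defined independently of the choice of $\ell$ with $\ell^2=s$. The paper has already established this, so reading off the first coordinate of a product is legitimate.
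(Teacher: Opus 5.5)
Your proof is correct and is essentially the paper's argument: both fix $x\in A$ and use the rule $h_{s,a}h_{t,b}=h_{st,a+sb}$, together with uniqueness of the parametrisation of $B$, to see that the unique solution $y$ of $xy=z$ lies in $H_t$ exactly when $u=st$. The only cosmetic difference is that the paper disposes of $u\neq st$ via $H_sH_t\subseteq H_{st}$ and then solves $a+sb=c$ for $b$, whereas you treat both cases at once by reading off the first coordinate of $y=x^{-1}z$.
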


\begin{proof}
By \eqref{eq:psl-borel-mult}, $H_sH_t\subseteq H_{st}$.
Thus if $u\ne st$, there is no $(x,y)\in A\times H_t$ with $xy=z\in H_u$.

Now suppose that $u=st$.
Fix $x\in A$, and write $x=h_{s,a}$ and $z=h_{st,c}$.
By \eqref{eq:psl-borel-mult}, the element $y=h_{t,b}\in H_t$ satisfying $xy=z$ is determined uniquely by
\[
  a+sb=c.
\]
Thus for each $x\in A$ there is a unique $y\in H_t$, and the desired number is $|A|$.
\end{proof}

\begin{lemma}
\label{lem:psl_mixed_counts}
Let $g\in G\setminus B$, let $s\in \Sq(\F_q^\times)\setminus\{1\}$, and let $C$ be a conjugacy class of $G$.
Then
\[
  |\{h\in H_s\mid gh\in C\}|
  =
  |\{h\in H_{s^k}\mid gh\in C\}|.
\]
The same assertion holds with $hg$ in place of $gh$.
\end{lemma}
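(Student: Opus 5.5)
The plan is to parametrize $H_s$ explicitly and compute the conjugacy class of $gh$ directly from the trace of a lift, together with one extra invariant in the unipotent case. Fix a lift $\hat g=\begin{pmatrix}\alpha&\beta\\\gamma&\delta\end{pmatrix}$. Since $g\notin B$, we have $\gamma\neq0$. Fix $\ell$ with $\ell^2=s$. Then $h=h_{s,a}$ has lift $\hat h=\begin{pmatrix}\ell&a\ell^{-1}\\0&\ell^{-1}\end{pmatrix}$, and
\[
  \Tr(\hat g\hat h)=\alpha\ell+\delta\ell^{-1}+\gamma\ell^{-1}a .
\]
Because $\gamma\ell^{-1}\neq0$, the map $a\mapsto\Tr(\hat g\hat h)$ is a bijection $\F_q\to\F_q$. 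Moreover $gh\notin B$, since $gh\in B$ would force $g\in Bh^{-1}=B$. In particular $gh$ is never the identity.

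By the classification recalled above, a non-identity element whose trace satisfies $(\Tr)^2\neq4$ (resp.\ $\Tr\neq0$ for $q$ even) has its class determined by $(\Tr)^2$ (resp.\ $\Tr$). So for every semisimple class $C$, the number of $h\in H_s$ with $gh\in C$ equals the number of $t\in\F_q$ whose (squared) value lies in the corresponding set. This number is independent of $s$, and the same holds for $s^k$.

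The remaining case, which I expect to be the only real obstacle, is the unipotent classes. Exactly the $h$ with $\Tr(\hat g\hat h)=\pm2$ give unipotent $gh$: two values of $a$ for $q$ odd, one for $q$ even. For $q$ even there is only one unipotent class, so we are done. For $q$ odd, I would use the following fact. If $M\in\SL(2,q)$, $M\neq I$, $\Tr M=2$, then $M-I=a\begin{pmatrix}-xy&x^2\\-y^2&xy\end{pmatrix}$ with $v=(x,y)^{\mathsf T}$ the fixed vector. Hence a nonzero $(2,1)$-entry equals $-ay^2$ and has the square class of $-a$. This identifies the class via the invariant $a$ of type (ii).

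The $(2,1)$-entry of $\hat g\hat h$ is $\gamma\ell\neq0$. Thus the element with trace $2$ has class determined by the square class of $-\gamma\ell$. The element with trace $-2$, taken with lift $-\hat g\hat h$, has class determined by the square class of $\gamma\ell$.
\begin{itemize}
\item If $-1$ is a nonsquare, these two lie in different square classes, so each unipotent class is hit exactly once, for every $s$.
\item If $-1$ is a square, i.e.\ $q\equiv1\pmod4$, both are hit according to the square class of $\gamma\ell$. Here $m=(q-1)/2$ is even, so $k$ is odd. Then $\ell^k$ is a square root of $s^k$ with the same square class as $\ell$, so the counts for $H_s$ and $H_{s^k}$ agree.
\end{itemize}
Note that $s^k\neq1$ as well, though this is not even needed in the argument.

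For the version with $hg$, I would reduce to the case already treated. We have $hg\in C$ if and only if $g^{-1}h^{-1}\in C^{-1}$. As $h$ runs over $H_s$, the element $h^{-1}$ runs over $H_{s^{-1}}$, by \eqref{eq:psl-borel-mult}; also $g^{-1}\notin B$, and $(s^{-1})^k=(s^k)^{-1}$. Applying the first assertion to $g^{-1}$, the parameter $s^{-1}$ and the class $C^{-1}$ then gives the claim.
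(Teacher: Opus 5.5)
Your proof is correct and follows essentially the same route as the paper. Both arguments use a trace that is affine in the parameter; you parametrize $H_s$ by $a$ directly, while the paper uses the second fixed point $\beta$ with $a=(1-s)\beta$. Both read off the class from the (squared) trace away from $\pm2$, and both settle the unipotent case by the square class of the negative of the lower-left entry $\gamma\ell$ together with the oddness of $k$ when $-1$ is a square. The only real difference is the $hg$ case: you pass to inverses ($H_s^{-1}=H_{s^{-1}}$, $g^{-1}$, $C^{-1}$), whereas the paper simply notes that $hg=h(gh)h^{-1}$ is conjugate to $gh$; both are valid.
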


\begin{proof}
Choose a lift
\[
  \hat g=\begin{pmatrix}a&b\\ c&d\end{pmatrix}\in\SL(2,q)
\]
of $g$.
Since $g\notin B$, we have $c\ne0$.
Choose $\ell\in\F_q^\times$ with $\ell^2=s$.
For $\beta\in\F_q$, choose the following lift of $h_{s,(1-s)\beta}$:
\[
  \widehat{h}_{s,(1-s)\beta}
  =
  \begin{pmatrix}
  \ell&(1-s)\beta \ell^{-1}\\
  0&\ell^{-1}
  \end{pmatrix}.
\]
Then
\[
\hat g\widehat{h}_{s,(1-s)\beta}
=
\begin{pmatrix}
a\ell & a(1-s)\beta \ell^{-1}+b\ell^{-1} \\
c\ell & c(1-s)\beta \ell^{-1}+d\ell^{-1}
\end{pmatrix},
\]
and hence
\begin{equation}
\label{eq:psl_mixed_trace}
  \Tr(\hat g\widehat{h}_{s,(1-s)\beta})
  =\frac{as+d+c(1-s)\beta}{\ell}.
\end{equation}
The numerator is a nonconstant affine function of $\beta$.

If $q$ is even, conjugacy in $\SL(2,q)=\PSL(2,q)$, away from the identity, is controlled by the trace.
As $\beta$ runs over $\F_q$, the right-hand side of \eqref{eq:psl_mixed_trace} runs once through all of $\F_q$, independently of $s$.
Since $g\notin B$, the product $\hat g\widehat{h}_{s,(1-s)\beta}$ is never the identity.
Thus the number of products lying in any conjugacy class is independent of $s$.

Now suppose that $q$ is odd.
Except for the two unipotent classes, projective conjugacy is controlled by $\Tr^2$.
Squaring \eqref{eq:psl_mixed_trace} gives
\[
  \Tr(\hat g\widehat{h}_{s,(1-s)\beta})^2
  =\frac{(as+d+c(1-s)\beta)^2}{s}.
\]
Since $s$ is a square and the numerator is the square of a nonconstant affine function, the distribution of the values of $\Tr(\hat g\widehat{h}_{s,(1-s)\beta})^2$ is independent of $s$: zero occurs once, each nonzero square occurs twice, and nonsquares do not occur.

It remains, in odd characteristic, to separate the two unipotent classes.
The condition $\Tr(\hat g\widehat{h}_{s,(1-s)\beta})=2\varepsilon$, with $\varepsilon=\pm1$, determines a unique $\beta$ for each sign.
For this product, the trace $2$ lift is $\varepsilon\hat g\widehat{h}_{s,(1-s)\beta}$.
Its lower-left entry is $\varepsilon c\ell$, and the square class invariant for a trace $2$ unipotent element is represented by the negative of the lower-left entry whenever that entry is nonzero.
Thus the corresponding unipotent class is determined by the square class of $-\varepsilon c\ell$.
If $-1$ is a nonsquare, then changing $\varepsilon$ interchanges the two square classes; hence the two signs contribute once to each unipotent class, independently of $s$.
If $-1$ is a square, then $q\equiv1\pmod 4$, and $m=(q-1)/2$ is even.
Hence every unit $k\in(\mathbb Z/m\mathbb Z)^\times$ is odd.
Replacing $\ell$ by $\ell^k$ therefore preserves its square class.
Thus replacing $H_s$ by $H_{s^k}$ preserves the counts for the two unipotent classes as well.

Finally, the elements $hg$ and $gh$ are conjugate in $G$, since $hg=h(gh)h^{-1}$.
The assertion for $hg$ follows.
\end{proof}

\begin{lemma}
\label{lem:psl_outside_outside}
For conjugacy classes $C_i,C_j$ of $G$, the function from $B$ to the nonnegative integers given by
\begin{equation}\label{eq:psl_outside_outside}
  z\longmapsto
  |\{(x,y)\in(C_i\cap(G\setminus B))\times(C_j\cap(G\setminus B))\mid xy=z\}|
\end{equation}
is constant on $H_s$, for $s\in \Sq(\F_q^\times)\setminus\{1\}$, and the constant on $H_s$ is equal to the constant on $H_{s^k}$.
\end{lemma}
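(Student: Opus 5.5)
The plan is to prove constancy on each $H_s$ by a conjugation argument inside $B$, and then to compare the constants on $H_s$ and $H_{s^k}$ by averaging over $H_s$ and invoking \cref{lem:psl_mixed_counts}.

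\emph{Step 1: constancy on $H_s$.} For $b\in\F_q$, conjugation by $h_{1,b}\in H_1\subseteq B$ preserves each conjugacy class $C_i$, and it preserves $B$, hence also $G\setminus B$. So it preserves both sets $C_i\cap(G\setminus B)$ and $C_j\cap(G\setminus B)$. The map $(x,y)\mapsto (h_{1,b}xh_{1,b}^{-1},\,h_{1,b}yh_{1,b}^{-1})$ is therefore a bijection between the solution sets for $z$ and for $h_{1,b}zh_{1,b}^{-1}$. Consequently the function \eqref{eq:psl_outside_outside} is invariant under this conjugation.

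By \eqref{eq:psl-borel-mult} one computes
\[
  h_{1,b}\,h_{s,a}\,h_{1,b}^{-1}=h_{s,\,a+(1-s)b}.
\]
Since $s\neq1$, the shift $(1-s)b$ runs over all of $\F_q$ as $b$ does. Hence $H_s$ is a single orbit under conjugation by $H_1$, and the function is constant on $H_s$. Call this constant $N_s$.

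\emph{Step 2: averaging.} Since $N_s$ is constant on $H_s$ and $|H_s|=q$,
\[
  N_s=\frac1q\sum_{z\in H_s}\bigl|\{(x,y)\in (C_i\setminus B)\times(C_j\setminus B)\mid xy=z\}\bigr|.
\]
Parametrize each pair by $x$ and $h:=z$, so that $y=x^{-1}h$. If $x\notin B$ and $h\in B$, then $x^{-1}h\notin B$ automatically, so the condition $y\notin B$ can be dropped. This gives
\[
  N_s=\frac1q\sum_{x\in C_i\setminus B}\bigl|\{h\in H_s\mid x^{-1}h\in C_j\}\bigr|.
\]

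\emph{Step 3: comparing $s$ and $s^k$.} For each $x\in C_i\setminus B$, the element $g=x^{-1}$ lies in $G\setminus B$, and $s\neq1$. So \cref{lem:psl_mixed_counts} (the version with $gh$) says that the inner count is unchanged when $H_s$ is replaced by $H_{s^k}$. Moreover $s^k\neq1$, because $s\mapsto s^k$ is a permutation of $\Sq(\F_q^\times)$ fixing $1$. Hence Steps 1 and 2 apply equally to $H_{s^k}$, and summing over $x$ gives $N_s=N_{s^k}$.

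The only delicate points are the orbit computation in Step 1 and the observation in Step 2 that $y\notin B$ is automatic. The real difficulty has already been absorbed into \cref{lem:psl_mixed_counts}.
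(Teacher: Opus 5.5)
Your proposal is correct and follows the paper's proof essentially step for step: you get constancy on $H_s$ from transitivity of conjugation by $H_1$ (via $h_{1,b}h_{s,a}h_{1,b}^{-1}=h_{s,a+(1-s)b}$), then double count the triples by $x$, using that $x^{-1}z\notin B$ holds automatically, and apply \cref{lem:psl_mixed_counts} with $g=x^{-1}$. You also note explicitly that $s^k\neq1$, so that constancy holds on $H_{s^k}$ as well; the paper uses this without stating it.
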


\begin{proof}
The group $B$ acts transitively on $H_s$ by conjugation.
Indeed, conjugation by a translation $h_{1,c}\in H_1$ gives
\[
  h_{1,c}h_{s,a}h_{1,c}^{-1}
  =h_{s,a+(1-s)c}.
\]
Since $1-s\ne0$, varying $c$ moves the second coordinate to any element of $\F_q$.
Thus any $z,z'\in H_s$ are $B$-conjugate.
On the other hand, $C_i,C_j$ are $G$-conjugacy classes, and $G\setminus B$ is preserved by $B$-conjugation.
Hence, if $z'=bzb^{-1}$, then
\[
  (x,y)\longmapsto (bxb^{-1},byb^{-1})
\]
gives a bijection from
\[
  \{(x,y)\in(C_i\cap(G\setminus B))\times(C_j\cap(G\setminus B))\mid xy=z\}
\]
to
\[
  \{(x,y)\in(C_i\cap(G\setminus B))\times(C_j\cap(G\setminus B))\mid xy=z'\}.
\]
Therefore the function in \eqref{eq:psl_outside_outside} is constant on $H_s$.

We prove the second assertion.
Let $N$ be the constant value on $H_s$.
Then we have
\begin{align*}
|H_s| \cdot N
&=\sum_{z\in H_s}|\{(x,y)\in(C_i\cap(G\setminus B))\times(C_j\cap(G\setminus B))\mid xy=z\}|\\
&=
|\{(x,y,z)\in(C_i\cap(G\setminus B))\times(C_j\cap(G\setminus B))\times H_s\mid xy=z\}|.
\end{align*}
Since $H_s\subseteq B$, the conditions $x\notin B$ and $z\in H_s$ imply $x^{-1}z\notin B$.
Since $y=x^{-1}z$ is uniquely determined by $x$ and $z$, counting the same triples first by $x$ shows that the last expression is
\[
\sum_{x\in C_i\cap(G\setminus B)}|\{z\in H_s\mid x^{-1}z\in C_j\}|.
\]
For fixed $x\in C_i\cap(G\setminus B)$, the element $x^{-1}$ also lies outside $B$, so by \cref{lem:psl_mixed_counts}, the inner count is unchanged if $H_s$ is replaced by $H_{s^k}$.
Since $|H_s|=|H_{s^k}|=q$, the constant $N$ is the same after replacing $s$ by $s^k$.
\end{proof}

\begin{proof}[Proof of \cref{thm:psl_star_switching}: algebraic isomorphism]
The family $\calD_{q,k}$ is a partition of $G$, contains the identity as a singleton, and is closed under taking inverses.
This follows from $(H_s)^{-1}=H_{s^{-1}}$.

Fix conjugacy classes $C_i,C_j,C_h \in \mathcal{C}_G$, and let $D_i,D_j,D_h$ be the corresponding switched parts in $\calD_{q,k}$.
For $z\in D_h$, put
\[
  N_{ij}(z)=|\{(x,y)\in D_i\times D_j\mid xy=z\}|.
\]
We show that this is independent of the choice of $z\in D_h$, and that it is equal to the structure constant $c_{C_iC_j}^{C_h}$ for $\mathcal{C}_G$.
The switching is the identity outside $B$, so
\[
  D_h\cap(G\setminus B)=C_h\cap(G\setminus B).
\]
On the other hand, $D_h\cap B$ is unchanged for the identity class, the unipotent classes, and the non-split classes; only in the split semisimple case is $H_s\cup H_{s^{-1}}$ replaced by $H_{s^k}\cup H_{s^{-k}}$.

First consider $z\in D_h\cap(G\setminus B)$.
Then $z\in C_h\cap(G\setminus B)$.
Since $B$ is a subgroup, if $xy=z\notin B$, then only the following three cases are possible:
\begin{enumerate}[label=($B^\complement$\arabic*)]
  \item $x\in D_i\cap(G\setminus B),\ y\in D_j\cap(G\setminus B)$; \label{item:psl_outside_outside}
  \item $x\in D_i\cap B,\ y\in D_j\cap(G\setminus B)$; \label{item:psl_borel_outside}
  \item $x\in D_i\cap(G\setminus B),\ y\in D_j\cap B$. \label{item:psl_outside_borel}
\end{enumerate}

The contribution from \ref{item:psl_outside_outside} is
\[
  |\{(x,y)\in(D_i\cap(G\setminus B))\times(D_j\cap(G\setminus B))\mid xy=z\}|
  =
  |\{(x,y)\in(C_i\cap(G\setminus B))\times(C_j\cap(G\setminus B))\mid xy=z\}|,
\]
which is exactly the same before and after switching.

Since $b\in B$ and $z\in G\setminus B$ imply $z^{-1}b\in G\setminus B$, the contribution from \ref{item:psl_borel_outside} is
\[
  |\{(b,y)\in(D_i\cap B)\times(D_j\cap(G\setminus B))\mid by=z\}|
  =
  |\{b\in D_i\cap B\mid z^{-1}b\in C_j^{-1}\}|.
\]
If $D_i$ comes from the identity class, a unipotent class, or a non-split class, then $D_i\cap B=C_i\cap B$, so this agrees with the corresponding contribution before switching.
If $D_i$ comes from a split semisimple class, we look at the sets $H_s$ in $D_i\cap B$ one at a time.
Since a split semisimple class has $s\ne1$, we may apply the first assertion of \cref{lem:psl_mixed_counts}, taking $g=z^{-1}$ and $C=C_j^{-1}$, obtaining
\[
  |\{b\in H_s\mid z^{-1}b\in C_j^{-1}\}|
  =
  |\{b\in H_{s^k}\mid z^{-1}b\in C_j^{-1}\}|.
\]
The case \ref{item:psl_outside_borel} is identical to \ref{item:psl_borel_outside}.

Thus, for $z\in D_h\cap(G\setminus B)$, all three contributions agree with the corresponding contributions before switching.
Hence $N_{ij}(z)$ is equal to the corresponding structure constant for $\mathcal{C}_G$.

Next suppose that $z\in D_h\cap B$.
If $D_h$ comes from a non-split semisimple class, then $D_h\cap B=\emptyset$, so there is nothing to consider.
If $xy=z\in B$, then only the following two cases can occur:
\begin{enumerate}[label=($B$\arabic*)]
  \item $x\in D_i\cap B,\ y\in D_j\cap B$; \label{item:psl_borel_borel}
  \item $x\in D_i\cap(G\setminus B),\ y\in D_j\cap(G\setminus B)$. \label{item:psl_outside_outside2}
\end{enumerate}

We compute the contribution
\begin{equation}
  \label{eq:psl_borel_borel_contribution}
  |\{(x,y)\in(D_i\cap B)\times(D_j\cap B)\mid xy=z\}|
\end{equation}
directly.
If either $D_i$ or $D_j$ comes from a non-split semisimple class, then it does not meet $B$, and so it does not contribute to \eqref{eq:psl_borel_borel_contribution}.

The intersections with $B$ of the identity class and the unipotent classes are not moved by the switching, and are subsets of $H_1$.
For the switched part corresponding to a split semisimple class $C_{\bar r}$, we have
\[
  D_{\bar r}\cap B=H_{r^k}\cup H_{r^{-k}}.
\]

First suppose that both $D_i\cap B$ and $D_j\cap B$ are subsets of $H_1$.
Then their product lies in $H_1$, so the contribution is $0$ if $z\notin H_1$.
If $z\in H_1$, then both sets and the target are unchanged by switching, and the contribution is the same as before switching.

Next suppose that exactly one factor is a subset $A$ of $H_1$, and the other comes from a split part.
For instance, suppose that $D_i$ comes from the identity class or from a unipotent class, and that $D_j$ comes from a split semisimple class.
Then
\[
  D_i\cap B=A\subseteq H_1,\qquad
  D_j\cap B=H_{r^k}\cup H_{r^{-k}}.
\]
Put $V=\{r^k,r^{-k}\}$.
If $z\in H_u$, then applying \cref{lem:psl_borel_counts} with $s=1$ and $t=v$, for each $v\in V$, gives
\[
  |\{(x,y)\in A\times (H_{r^k}\cup H_{r^{-k}})\mid xy=z\}|
  =
  \begin{cases}
    |A|,& u\in V,\\
    0,& u\notin V.
  \end{cases}
\]
Write $u=u_0^k$ for the unique $u_0\in\Sq(\F_q^\times)$.
Thus the switched condition $u\in\{r^k,r^{-k}\}$ is equivalent to the original condition $u_0\in\{r,r^{-1}\}$, and hence this contribution agrees with the original contribution.
The case with $D_i\cap B=H_{r^k}\cup H_{r^{-k}}$ and $D_j\cap B=A\subseteq H_1$ is the same.

It remains to consider the case where both $D_i$ and $D_j$ come from split semisimple classes.
Write
\[
  D_i\cap B=\bigcup_{v\in S_i^k}H_v,\qquad
  D_j\cap B=\bigcup_{w\in S_j^k}H_w,
\]
where, for some $r,t\in \Sq(\F_q^\times)\setminus\{1\}$,
\[
  S_i=\{r,r^{-1}\},\qquad S_j=\{t,t^{-1}\},\qquad
  S_i^k=\{v^k\mid v\in S_i\},\quad
  S_j^k=\{w^k\mid w\in S_j\}.
\]
Let $z\in H_u$.
Applying \cref{lem:psl_borel_counts} with $A=H_v$ and $t=w$, for each $v\in S_i^k$ and $w\in S_j^k$, gives
\[
  |\{(x,y)\in H_v\times H_w\mid xy=z\}|
  =
  \begin{cases}
    q,& vw=u,\\
    0,& vw\ne u.
  \end{cases}
\]
Therefore the contribution is
\[
  q\cdot|\{(v,w)\in S_i^k\times S_j^k\mid vw=u\}|.
\]
Since the map $\Sq(\F_q^\times)\to \Sq(\F_q^\times)$, $v\mapsto v^k$, is a group automorphism, this number of solutions is equal to the corresponding number before switching,
\[
  |\{(v,w)\in S_i\times S_j\mid vw=u_0\}|,
\]
where $u=u_0^k$; if $u=1$, then $u_0=1$.
Thus the contribution from \ref{item:psl_borel_borel} also agrees with the corresponding contribution before switching.

The contribution from \ref{item:psl_outside_outside2} is
\[
  F(z)=|\{(x,y)\in(D_i\cap(G\setminus B))\times(D_j\cap(G\setminus B))\mid xy=z\}|
  =
  |\{(x,y)\in(C_i\cap(G\setminus B))\times(C_j\cap(G\setminus B))\mid xy=z\}|.
\]
If $z$ belongs to the identity class or to a unipotent class, then $F(z)$ is exactly the same as before switching.
If $z$ belongs to a split semisimple class, then by \cref{lem:psl_outside_outside}, $F$ is constant on $H_s$, and this constant is equal to the constant on $H_{s^k}$.
Thus the contribution is preserved in this case as well.

Consequently, for every $z\in D_h$, the number $N_{ij}(z)$ is independent of the choice of $z$, and its value is equal to the corresponding structure constant for $\mathcal{C}_G$.
Therefore all structure constants are preserved, and $\calD_{q,k}$ is a Schur partition algebraically isomorphic to $\mathcal{C}_G$.
\end{proof}

\subsection{Combinatorial non-isomorphism}

In this subsection, we prove the second assertion of \cref{thm:psl_star_switching}; that is, we show that, if $k\not\equiv\pm1\pmod m$, then the Cayley association scheme associated with $\calD_{q,k}$ is not combinatorially isomorphic to $\mathfrak X(G)$.
Let $\calU$ be the set of all nonidentity unipotent elements of $G$, that is, the union of the nonidentity unipotent conjugacy classes.
If $q$ is odd, this is the union of two conjugacy classes; if $q$ is even, it is one conjugacy class.

We record a basic fact about the Sylow $p$-subgroups of $G$.
As above, $H_1$ is a Sylow $p$-subgroup of $G$, and has size $q$.
By Sylow's theorem, every Sylow $p$-subgroup of $G$ of order $q$ is conjugate to $H_1$.
Since unipotence is preserved under conjugation, every nonidentity element of a Sylow $p$-subgroup is a nonidentity unipotent element.
Conversely, every nonidentity unipotent element is a $p$-element, and therefore lies in some Sylow $p$-subgroup.

\begin{lemma}
\label{lem:psl_unipotent_cliques}
Let $q\ge5$, and let $\Delta$ be the Cayley graph on $G$ in which two vertices $a,b$ are adjacent if $ba^{-1}\in\calU$.
Let $K$ be a subset of $G$ of size $q$.
Then the induced subgraph on $K$ is a clique in $\Delta$ if and only if $K$ is a left coset of a Sylow $p$-subgroup of order $q$.
\end{lemma}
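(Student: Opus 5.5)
The plan is to treat the two directions separately, after a right-translation reduction. The key observation is that adjacency in $\Delta$ depends only on $ba^{-1}$, so right multiplication by any $g\in G$ is an automorphism of $\Delta$.

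\emph{Cosets are cliques.} Let $K=gP$ with $P$ a Sylow $p$-subgroup. For distinct $a=gp_1$ and $b=gp_2$ in $K$ we get $ba^{-1}=g(p_2p_1^{-1})g^{-1}$. This is a nonidentity element of the Sylow $p$-subgroup $gPg^{-1}$, so by the basic fact recorded above it lies in $\calU$. Hence $K$ is a clique of size $|P|=q$.

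\emph{Cliques are cosets: reduction.} Let $K$ be a clique of size $q$, fix $a\in K$, and put $S=Ka^{-1}$. Then $S$ is a clique containing $e$, so $S\setminus\{e\}\subseteq\calU$ and $xy^{-1}\in\calU$ for distinct $x,y\in S$. It suffices to show that $S$ is a Sylow $p$-subgroup $P$. Then $K=Pa=a(a^{-1}Pa)$ is a left coset of the Sylow subgroup $a^{-1}Pa$.

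\emph{Grouping $S\setminus\{e\}$ by fixed points.} A nonidentity unipotent element fixes exactly one point of $\Pone(\F_q)$, and hence lies in exactly one Sylow $p$-subgroup, namely the one fixing that point (a conjugate of $H_1$). So I would partition $S\setminus\{e\}$ according to the fixed point, obtaining classes of sizes $n_1,\dots,n_r$ with $\sum n_i=q-1$. The goal is to show $r=1$; then $S$ lies in a single Sylow subgroup of order $q$ and has size $q$, so equals it.

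\emph{Local computation.} Let $x\ne e$ fix $\alpha$ and $y\ne e$ fix $\beta\ne\alpha$. Since $G$ is $2$-transitive on $\Pone(\F_q)$, I may conjugate to $\alpha=\infty$, $\beta=0$. Then $x$ has lift $\begin{pmatrix}1&a\\0&1\end{pmatrix}$ and $y^{-1}$ has lift $\begin{pmatrix}1&0\\c&1\end{pmatrix}$, with $ac\neq0$. This gives $\Tr=2+ac$.
\begin{itemize}
\item If $q$ is even, $\Tr=ac\ne0$, so $xy^{-1}$ is semisimple, never unipotent. Thus $r=1$ immediately.
\item If $q$ is odd, $xy^{-1}$ is unipotent (not the identity, since it is not upper triangular) if and only if $(2+ac)^2=4$, that is, $ac=-4$. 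So for a given $x$ and a given point $\beta\ne\alpha$, there is exactly one $y$ fixing $\beta$ adjacent to $x$.
\end{itemize}

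\emph{Consequence for the class sizes.} Take two distinct classes $i\neq j$. Every element of class $i$ must be adjacent to every element of class $j$, but has at most one neighbour there, so $n_i=n_j=1$. Hence if $r\ge2$, all classes are singletons and $r=q-1$. So $S\setminus\{e\}$ consists of $q-1$ pairwise adjacent unipotent elements with distinct fixed points.

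\emph{Main obstacle: ruling out this configuration for $q\ge5$.} Normalize $x=\begin{bmatrix}1&1\\0&1\end{bmatrix}$ (allowed up to conjugation by the diagonal torus, keeping track of the square class of $a$) and $y$ the unique neighbour fixing $0$. A third element $z$ fixing $\beta\notin\{\infty,0\}$ is then forced twice: once by adjacency to $x$ and once by adjacency to $y$. Writing the unique neighbour of $x$ fixing $\beta$ explicitly (conjugate the $\beta=0$ solution by a translation in $H_1$) and imposing $\Tr(yz^{-1})^2=4$ should give a polynomial equation in $\beta$ of low degree (I expect degree at most $2$). It has to be checked that this equation is not identically satisfied. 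Then at most $2$ admissible $\beta$ exist, so $q-1=r\le 2+2=4$. I would then verify that the remaining case $q=5$ is also excluded, either by the third-element equation having fewer solutions or by checking pairwise adjacency among the candidates. This gives $r=1$ for $q\ge5$ and completes the proof.
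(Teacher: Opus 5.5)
Your proposal is correct and follows essentially the paper's route. Both arguments group the nonidentity elements of the translated clique by fixed point, and use the trace $2+ac$ (the paper's $2+\xi\eta$) to rule out mixed fixed points in even characteristic and to force $ac=-4$ in odd characteristic. Both then deduce that each Sylow subgroup contributes at most one element once two fixed points occur, and finally bound the number of fixed points. At that last step the paper uses that all $(\alpha_k-\alpha_l)^2$ must coincide, while you normalize a second point to $0$.

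The computation you defer does go through. In the paper's coordinates, the forced third element is $u_{\beta,\eta}$, with the same parameter $\eta=-4/\xi$ as $y=u_{0,\eta}$. Adjacency to $y$ becomes $\eta^2\beta^2=-4$, which gives $r\le 4$. For $q=5$, the two candidates $\pm\beta$ would also need $4\eta^2\beta^2=-4$, i.e.\ $12=0$, which fails in characteristic $5$.
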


\begin{proof}
A left coset of a Sylow $p$-subgroup is a clique of size $q$.
Indeed, let $P$ be such a Sylow $p$-subgroup and write the left coset as $gP$.
Take two distinct elements $gp_1,gp_2\in gP$, with $p_1,p_2\in P$ and $p_1\ne p_2$.
Then $p_2p_1^{-1}\in P\setminus\{1\}$, and since $P$ is a Sylow $p$-subgroup of order $q$, the element $p_2p_1^{-1}$ is nonidentity unipotent.
Thus $(gp_2)(gp_1)^{-1}=g(p_2p_1^{-1})g^{-1}\in\calU$.
Hence any two distinct elements of $gP$ are adjacent in $\Delta$, so $gP$ is a clique.

Conversely, after left translating a clique of size $q$, we may assume that it contains the identity.
Call this clique $K$.
Every nonidentity element of $K$ is a nonidentity unipotent element, and the quotient of any two distinct elements of $K$ is also unipotent.
For $\alpha\in\Pone(\F_q)$, let $P_\alpha$ be the Sylow subgroup fixing $\alpha$.
We show that $K=P_\alpha$ for some $\alpha$.

First suppose that $q$ is odd.
Elements $u_{\infty,\xi}\in P_\infty$ and $u_{\alpha,\eta}\in P_\alpha$ have lifts
\[
  \hat u_{\infty,\xi}=\begin{pmatrix}1&\xi\\0&1\end{pmatrix},
  \qquad
  \hat u_{\alpha,\eta}=
  \begin{pmatrix}
  1-\alpha\eta&\alpha^2\eta\\
  -\eta&1+\alpha\eta
  \end{pmatrix},
\]
respectively.
We call $\xi$ and $\eta$ the parameters of $u_{\infty,\xi}$ and $u_{\alpha,\eta}$.
Consider adjacency in $\Delta$ between $u_{\infty,\xi}$, with $\xi\ne0$, and $u_{\alpha,\eta}$, with $\eta\ne0$.
A direct computation gives
\[
  \Tr(\hat u_{\infty,\xi}^{-1}\hat u_{\alpha,\eta})=2+\xi\eta,
  \qquad
  \Tr(\hat u_{\alpha,\eta}^{-1}\hat u_{\beta,\theta})=2+\eta\theta(\alpha-\beta)^2.
\]
If $\xi,\eta,\theta\ne0$ and $\alpha\ne\beta$, these traces are different from $2$.
Therefore, for these elements to be adjacent, it is necessary that
\begin{equation}
\label{eq:psl_unipotent_clique_trace}
  \xi\eta=-4,
  \qquad
  \eta\theta(\alpha-\beta)^2=-4.
\end{equation}
Suppose now that $u_1,u_2,u_3\in K$, with $u_1,u_2\in P_\alpha$ and $u_3\in P_\beta$, where $\alpha,\beta\in\Pone(\F_q)$ and $\alpha\ne \beta$.
By \eqref{eq:psl_unipotent_clique_trace}, the parameters of $u_1$ and $u_2$, as seen from $u_3$, must be equal.
Thus, if $K$ meets more than one fixed-point Sylow subgroup, then each fixed-point Sylow subgroup meets $K$ in at most one nonidentity element.
Without loss of generality, assume that one such fixed-point Sylow subgroup is $P_\infty$.
Write
\[
  K=\{e,u_{\infty,\xi},u_1,u_2,\ldots,u_{q-2}\},
\]
where $u_{\infty,\xi}\in P_\infty$, and $u_1,u_2,\ldots,u_{q-2}$ lie in distinct fixed-point Sylow subgroups $P_{\alpha_1},P_{\alpha_2},\ldots,P_{\alpha_{q-2}}$, with $\alpha_1,\ldots,\alpha_{q-2}\in\F_q$.
By \eqref{eq:psl_unipotent_clique_trace}, the parameters of $u_1,u_2,\ldots,u_{q-2}$ are all equal.
Applying \eqref{eq:psl_unipotent_clique_trace} again, the values $(\alpha_k-\alpha_l)^2$, for $1\le k<l\le q-2$, are all equal.
Such a subset $\{\alpha_1,\alpha_2,\ldots,\alpha_{q-2}\}$ of $\F_q$ has size at most $2$ if the characteristic is not $3$, and at most $3$ if the characteristic is $3$.
Indeed, after translating and scaling three distinct elements to $0,1,t$, the condition gives $t^2=1$ and $(t-1)^2=1$.
Thus $t=-1$ and $t=2$, which can occur only in characteristic $3$; a fourth element cannot satisfy the same pairwise condition with $0,1,-1$.
Thus the clique has size at most $4$ in characteristic not $3$, and at most $5$ in characteristic $3$.
These bounds are smaller than $q$: in characteristic different from $3$ this follows from $q\ge5$, while in characteristic $3$ the assumption $q\ge5$ gives $q\ge9$.

In characteristic $2$, the same trace computation shows that if $K$ meets more than one fixed-point Sylow subgroup, with one of them taken to be $P_\infty$, then $\xi\eta=0$ is forced.
This is impossible for nonidentity unipotent elements.
Therefore all nonidentity elements of $K$ have the same fixed point $\alpha$.
Hence $K\subseteq P_\alpha$, and since $|K|=|P_\alpha|=q$, we have $K=P_\alpha$.
This proves the lemma.
\end{proof}

Assume from now on that $k\not\equiv\pm1\pmod m$.
This hypothesis implies $m\notin\{1,2,3,4,6\}$, hence $q\ge8$ in even characteristic and $q\ge11$ in odd characteristic; in particular \cref{lem:psl_unipotent_cliques} applies.
Choose $s\in \Sq(\F_q^\times)\setminus\{1\}$ such that
$s_0=s^k\notin\{s,s^{-1}\}$.
Such an $s$ exists: if $s^k\in\{s,s^{-1}\}$ for every $s$, then applying this to a generator of the cyclic group $\Sq(\F_q^\times)$ would give $k\equiv\pm1\pmod m$.
Let $D_{\bar s}$ be the switched part in \eqref{eq:psl-switched-part}.
The set $D_{\bar s}\setminus B=C_{\bar s}\setminus B$ is nonempty: one can conjugate a diagonal representative of $C_{\bar s}$ so that its two fixed points in $\Pone(\F_q)$ do not include $\infty$.
Let $\Gamma_{\bar s}:=\Gamma_{\calU}(D_{\bar s})$, equivalently the induced subgraph of $\Delta$ on $D_{\bar s}$.

\begin{lemma}
\label{lem:psl_star_clique}
The graph $\Gamma_{\bar s}$ has a vertex $x$ contained in a clique of size $q$.
\end{lemma}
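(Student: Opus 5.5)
The plan is to exhibit an explicit clique of size $q$ inside $D_{\bar s}$, using the new pieces $H_{s_0}$ (with $s_0=s^k$) that the switching placed into $D_{\bar s}$. The key observation is that $H_{s_0}$ is a left coset of the Sylow $p$-subgroup $H_1$. Indeed, by \eqref{eq:psl-borel-mult} we have $h_{s_0,a}h_{1,b}=h_{s_0,a+s_0b}$, so $H_{s_0}=h_{s_0,0}H_1$. By \cref{lem:psl_unipotent_cliques}, or directly by the first half of its proof, a left coset $gP$ of a Sylow $p$-subgroup is a clique in $\Delta$. For two distinct elements $x=gp_1$ and $y=gp_2$ we get $yx^{-1}=g(p_2p_1^{-1})g^{-1}\in\calU$, because $\calU$ is closed under conjugation. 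Hence $H_{s_0}$ is a clique of size $q$ in $\Delta$.

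Next I would check that $H_{s_0}\subseteq D_{\bar s}$. By \eqref{eq:psl-switched-part}, $D_{\bar s}\supseteq H_{s^k}\cup H_{s^{-k}}$, and $H_{s^k}=H_{s_0}$. Since $\Gamma_{\bar s}$ is the subgraph of $\Delta$ induced on $D_{\bar s}$, the set $H_{s_0}$ is a clique of size $q$ in $\Gamma_{\bar s}$. Any vertex $x\in H_{s_0}$, for instance $x=h_{s_0,0}$, then satisfies the lemma. The requirement $s\ne 1$ gives $s_0\ne1$, but this is not even needed for the clique property; it only ensures that $H_{s_0}$ is one of the switched pieces.

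There is essentially no obstacle. The only care needed is the coset direction: the adjacency convention is $ba^{-1}\in\calU$, so left cosets give cliques, and $H_{s_0}=h_{s_0,0}H_1$ is indeed a left coset. This is also a right coset, since $H_1$ is normal in $B$. The substantive point of this lemma is presumably its contrast with the vertices in $D_{\bar s}\setminus B$ in the subsequent argument. Those vertices should fail to lie in such cliques, because a $q$-clique must be a full coset $gP_\alpha$ and such cosets cannot stay inside $C_{\bar s}\setminus B$ together with the remaining pieces. That contrast will break vertex-transitivity via \cref{lem:schurian-local-transitive} and \cref{lem:inv}. It is not required for the present statement, which only asks for one vertex lying in a clique of size $q$.
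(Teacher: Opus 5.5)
Your proposal is correct and is essentially the paper's proof: both note that $H_{s_0}=H_{s^k}\subseteq D_{\bar s}$ is a clique of size $q$, because the quotient of two distinct elements of $H_{s_0}$ lies in $H_1\setminus\{1\}\subseteq\calU$, and both take $x=h_{s_0,0}$. Your phrasing via the left coset $H_{s_0}=h_{s_0,0}H_1$ and conjugation is just a repackaging of the paper's direct observation that $h_{s_0,b}h_{s_0,a}^{-1}\in H_1\setminus\{1\}$.
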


\begin{proof}
By the definition of $D_{\bar s}$, we have $H_{s_0}\subseteq D_{\bar s}$, and $H_{s_0}$ is a clique of size $q$ in $\Gamma_{\bar s}$.
Indeed, for distinct $h_{s_0,a},h_{s_0,b}\in H_{s_0}$, the quotient $h_{s_0,b}h_{s_0,a}^{-1}$ lies in $H_1\setminus\{1\}$, and is therefore nonidentity unipotent.
Thus we may take $x=h_{s_0,0}\in H_{s_0}$.
\end{proof}

\begin{lemma}
\label{lem:psl_outside_no_clique}
If $x\in D_{\bar s}\setminus B=C_{\bar s}\setminus B$, then $x$ is not contained in any clique of size $q$ in $\Gamma_{\bar s}$.
\end{lemma}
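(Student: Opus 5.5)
Suppose $x\in D_{\bar s}\setminus B$ lies in a clique $K\subseteq D_{\bar s}$ of size $q$ in $\Gamma_{\bar s}$. Since $\Gamma_{\bar s}$ is the induced subgraph of $\Delta$ on $D_{\bar s}$, the set $K$ is also a clique of size $q$ in $\Delta$. By \cref{lem:psl_unipotent_cliques}, which applies because $q\ge8$, we get $K=gP$ for some $g\in G$ and some Sylow $p$-subgroup $P=P_\alpha$ with $\alpha\in\Pone(\F_q)$. Since $x\in K$, we may take $g=x$, so $K=xP_\alpha$. The plan is to show that such a coset always contains an element outside $D_{\bar s}$, which is a contradiction.

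First I would locate $K$ inside $C_{\bar s}$. Elements of $D_{\bar s}$ are either in $C_{\bar s}\setminus B$ or in $H_{s_0}\cup H_{s_0^{-1}}\subseteq C_{\bar s_0}$. Since $s_0\notin\{s,s^{-1}\}$, the class $C_{\bar s_0}$ differs from $C_{\bar s}$. Hence $K\cap B\subseteq H_{s_0}\cup H_{s_0^{-1}}$ and $K\setminus B\subseteq C_{\bar s}$.

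The key step is a count of $K\cap B=xP_\alpha\cap B$. This intersection is either empty or a coset of $P_\alpha\cap B$. If $\alpha=\infty$, then $P_\alpha\cap B=P_\infty=H_1$, and since $x\notin B$ we have $xP_\infty\cap B=\emptyset$. In that case $K\subseteq C_{\bar s}\setminus B$. If $\alpha\ne\infty$, then $P_\alpha\cap B$ is trivial, so $|K\cap B|\le1$, and again at least $q-1$ elements of $K$ lie in $C_{\bar s}$.

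So the heart of the argument is to show that a coset $xP_\alpha$ with $x$ split semisimple cannot have $q-1$ or $q$ of its elements in $C_{\bar s}$. For this I would use traces. Conjugate so that $\alpha=\infty$; this preserves conjugacy classes, since we only need class membership of the elements of $xP$. Let $\hat x=\begin{pmatrix}a&b\\c&d\end{pmatrix}$. Then $\Tr(\hat x\hat u_{\infty,\xi})=a+d+c\xi$.

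If $c\neq0$, this trace runs over all of $\F_q$ as $\xi$ varies. Elements of $C_{\bar s}$ have squared trace equal to one fixed value $s+s^{-1}+2$, which allows at most $2$ trace values. So at most $2$ elements of $xP$ lie in $C_{\bar s}$, and $2<q-1$.

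If $c=0$, then $x$ fixes $\alpha$. Since this is after conjugating $\alpha$ to $\infty$, in the original setting $x\in G_\alpha$, and $xP_\alpha\subseteq G_\alpha$. Then the whole coset has constant trace, so this case needs separate treatment. In the case $\alpha=\infty$ it contradicts $x\notin B$. In the case $\alpha\ne\infty$, the coset $xP_\alpha$ lies in $G_\alpha$, and $G_\alpha\cap B$ is the torus fixing $\alpha$ and $\infty$. This torus meets $xP_\alpha$ in exactly one element $y$. That element has the same eigenvalues as $x$, so $y\in C_{\bar s}\cap B=H_s\cup H_{s^{-1}}$. But $H_s\cup H_{s^{-1}}$ is disjoint from $D_{\bar s}$, so $y\in K\setminus D_{\bar s}$, a contradiction.

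I expect this last subcase, where $x$ fixes $\alpha$, to be the main obstacle. The trace argument gives nothing there, and one has to check carefully that the unique $B$-element of the coset is forced into $H_s\cup H_{s^{-1}}$ rather than $H_{s_0}$. The point is that the eigenvalue pair is constant on $xP_\alpha$: the elements of $G_\alpha$ are upper triangular in a basis adapted to $\alpha$, and multiplying by $P_\alpha$ does not change the diagonal. This gives the contradiction.
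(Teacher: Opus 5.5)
Your proof is correct and follows essentially the paper's argument. Both proofs identify the clique as $xP_\alpha$ via \cref{lem:psl_unipotent_cliques}. Both dispose of the case where $\alpha$ is not fixed by $x$ because the trace is a nonconstant affine function along the coset. Both handle the fixed-point case through the element of $xP_\alpha$ lying in $B$: it keeps the eigenvalues of $x$, so it falls in $H_s\cup H_{s^{-1}}$, which is disjoint from $D_{\bar s}$.

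The only difference is cosmetic. You first bound $|xP_\alpha\cap B|\le 1$ abstractly and then conjugate $\alpha$ to $\infty$. The paper instead computes the lower-left entry and the trace of $\hat x\hat u_{\alpha,t}$ directly in the original coordinates.
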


\begin{proof}
Suppose that $x$ is contained in a clique of size $q$ in $\Gamma_{\bar s}$.
By \cref{lem:psl_unipotent_cliques}, this clique is $xP_\alpha$ for some $\alpha\in\Pone(\F_q)$.
Therefore
\begin{equation}
\label{eq:psl-pencil-contained}
  xP_\alpha\subseteq D_{\bar s}.
\end{equation}

First suppose that $\alpha$ is not a fixed point of $x$.
We keep the original coordinate in which $B=G_\infty$.
If $\alpha=\infty$, take lifts
\[
  \hat x=\begin{pmatrix}a&b\\ c&d\end{pmatrix},
  \qquad c\ne0,
  \qquad
  \hat u_{\infty,t}=\begin{pmatrix}1&t\\0&1\end{pmatrix}.
\]
Then $x u_{\infty,t}\notin B$ for every $t$, and
$\Tr(\hat x\hat u_{\infty,t})=a+d+ct$
is a nonconstant affine function of $t$.
If $\alpha\in\F_q$, write
\[
  \hat u_{\alpha,t}=
  \begin{pmatrix}
  1-\alpha t&\alpha^2t\\
  -t&1+\alpha t
  \end{pmatrix}.
\]
The lower-left entry of $\hat x\hat u_{\alpha,t}$ is
$c-t(c\alpha+d)$,
so $x u_{\alpha,t}\notin B$ for all but at most one value of $t$.
Moreover
\[
  \Tr(\hat x\hat u_{\alpha,t})
  =a+d+t(c\alpha^2+(d-a)\alpha-b).
\]
The coefficient of $t$ is nonzero precisely because $\alpha$ is not a fixed point of $x$.
Thus, in both cases, there are more than two values of $t$ for which $x u_{\alpha,t}\notin B$ and the trace is a nonconstant affine function of $t$.
For those values, \eqref{eq:psl-pencil-contained} implies $x u_{\alpha,t}\in C_{\bar s}\setminus B$.
In odd characteristic, the value of $\Tr(\hat x\hat u_{\alpha,t})^2$ would then have to be the fixed value attached to $\bar s$, although a nonconstant quadratic polynomial cannot take one fixed value at more than two points.
In even characteristic, the trace itself would have to be the fixed value attached to $\bar s$, although a nonconstant affine function cannot take one fixed value at more than one point.
This contradiction proves that \eqref{eq:psl-pencil-contained} is impossible in this case.

It remains to consider the case in which $\alpha$ is a fixed point of $x$.
Since $x\notin B$, we have $x\cdot\infty\ne\infty$, and since $\alpha$ is a fixed point of $x$, we also have $x^{-1}\cdot\infty\ne\alpha$.
The group $P_\alpha$ acts transitively on $\Pone(\F_q)\setminus\{\alpha\}$, so there exists $p\in P_\alpha$ with
$p\cdot\infty=x^{-1}\cdot\infty$.
It follows that
$(xp)\cdot\infty
=x\cdot(p\cdot\infty)
=x\cdot(x^{-1}\cdot\infty)
=\infty$,
and hence $xp\in B$.
Moreover, since $p$ is a unipotent element fixing $\alpha$, in coordinates based at $\alpha$ we may choose a lift $\hat x$ of $x$ which is upper triangular, and a lift $\hat p$ of $p$ which is upper triangular with both diagonal entries equal to $1$.
Thus the ratio of the eigenvalues of $\hat x\hat p$ is the same as that of $\hat x$, and $xp\in C_{\bar s}$.
Hence
\[
  xp\in C_{\bar s}\cap B=H_s\cup H_{s^{-1}}.
\]
On the other hand, $D_{\bar s}$ is disjoint from $H_s\cup H_{s^{-1}}$, so $xp\notin D_{\bar s}$, contradicting \eqref{eq:psl-pencil-contained}.
Therefore $x$ is not contained in any clique of size $q$ in $\Gamma_{\bar s}$.
\end{proof}

\begin{proof}[Proof of \cref{thm:psl_star_switching}: non-isomorphism]
By \cref{lem:psl_star_clique}, the local graph $\Gamma_{\bar s}$ has a vertex contained in a clique of size $q$.
By \cref{lem:psl_outside_no_clique}, the same graph also has a vertex which is not contained in any clique of size $q$.
Thus $\Gamma_{\bar s}$ is not vertex-transitive.

Suppose that the switched association scheme were combinatorially isomorphic to the group association scheme $\mathfrak X(G)$.
Under this isomorphism, $D_{\bar s}$ would correspond to a conjugacy class $C$ of $G$, and $\calU$ would correspond to a union $Y$ of conjugacy classes of $G$.
By \cref{lem:inv}, $\Gamma_{\bar s}=\Gamma_{\cal U}(D_{\bar s},e)$ would be isomorphic to $\Gamma_Y(C,f(e))$.
By \cref{lem:local-basepoint}, $\Gamma_Y(C,f(e))\cong\Gamma_Y(C,e)$.
The latter graph is vertex-transitive under the conjugation action of $G$.
This is a contradiction.
Therefore the switched association scheme is not combinatorially isomorphic to $\mathfrak X(G)$.
\end{proof}

The two preceding subsections together prove \cref{thm:psl_star_switching}.
Since $\Gamma_{\bar s}$ is not vertex-transitive, we also obtain the following corollary.

\begin{corollary}
\label{cor:psl_switched_nonschurian}
Assume that $k\not\equiv\pm1\pmod m$.
Then the Cayley association scheme associated with $\calD_{q,k}$ is non-Schurian.
\end{corollary}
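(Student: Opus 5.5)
The plan is to argue by contradiction, using \cref{lem:schurian-local-transitive} together with the failure of vertex-transitivity of $\Gamma_{\bar s}$ already established in the proof of non-isomorphism. Suppose the Cayley association scheme $(G,\{R_X\}_{X\in\calD_{q,k}})$ were Schurian, with some transitive group $\tilde G\le\mathrm{Sym}(G)$ whose orbitals are exactly the relations $R_X$.

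First I fix the data. As in the preceding subsection, since $k\not\equiv\pm1\pmod m$ we choose $s\in\Sq(\F_q^\times)\setminus\{1\}$ with $s^k\notin\{s,s^{-1}\}$, and we consider the basic set $D_{\bar s}$. We take $\calU$ to be the union of the nonidentity unipotent classes; these classes are left unchanged in $\calD_{q,k}$, so they are basic sets of $\calD_{q,k}$. Next I apply \cref{lem:schurian-local-transitive} with $x=e$, with $i$ the index of $D_{\bar s}$, and with $\mathcal E$ the indices of the basic sets contained in $\calU$. We have $R_{D_{\bar s}}(e)=D_{\bar s}$, and the arc relation $ba^{-1}\in\calU$ is exactly the one defining $\Gamma_{\calU}(D_{\bar s})=\Gamma_{\bar s}$. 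The lemma therefore forces $\Gamma_{\bar s}$ to be vertex-transitive.

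Finally I derive the contradiction. \Cref{lem:psl_star_clique} gives a vertex of $\Gamma_{\bar s}$ lying in a clique of size $q$. \Cref{lem:psl_outside_no_clique} shows that any vertex in the nonempty set $D_{\bar s}\setminus B$ lies in no such clique. Being contained in a clique of size $q$ is preserved by graph automorphisms, so $\Gamma_{\bar s}$ is not vertex-transitive, which contradicts the previous step.

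I do not expect a serious obstacle, since all the substantive work was done in the two clique lemmas. The only points to check carefully are that the hypotheses of \cref{lem:psl_unipotent_cliques} hold, i.e.\ $q\ge5$, which follows from $k\not\equiv\pm1$ as already noted, and that the relation conventions line up: the local graph in \cref{lem:schurian-local-transitive} uses arcs $(y,z)\in R_j$, i.e.\ $zy^{-1}\in X_j$, which matches the definition of $\Gamma_Y(X)$.
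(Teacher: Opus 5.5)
Your proposal is correct and is essentially the paper's proof. The paper also supposes the scheme is Schurian, applies \cref{lem:schurian-local-transitive} to the local graph $\Gamma_{\bar s}$ on $D_{\bar s}$ with adjacency given by the unipotent basic sets, and obtains a contradiction with \cref{lem:psl_star_clique,lem:psl_outside_no_clique}.
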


\begin{proof}
Since $k\not\equiv\pm1\pmod m$, choose, as above, $s\in \Sq(\F_q^\times)\setminus\{1\}$ such that
\[
  s_0=s^k\notin\{s,s^{-1}\},
\]
and consider the local graph $\Gamma_{\bar s}$ on $D_{\bar s}$.
Suppose, for a contradiction, that the Cayley association scheme associated with $\calD_{q,k}$ is Schurian.
Since $\Gamma_{\bar s}$ is the local graph on the basic set $D_{\bar s}$, with adjacency defined by the union $\calU$ of the unipotent basic sets, \cref{lem:schurian-local-transitive} implies that $\Gamma_{\bar s}$ is vertex-transitive.
But by \cref{lem:psl_star_clique,lem:psl_outside_no_clique}, the graph $\Gamma_{\bar s}$ is not vertex-transitive.
This contradiction shows that the Cayley association scheme associated with $\calD_{q,k}$ is non-Schurian.
\end{proof}

\section{Twisted conjugacy classes}
\label{sec:twisted_conjugacy}

In this section, we give a twisting construction for finite groups that is different from the one used in \cref{sec:psl_star_switching}.
The Schur partition obtained from this construction has the same multiplication table as the conjugacy classes.
In \cref{sec:psl_star_switching}, we change each split semisimple conjugacy class only inside a fixed Borel subgroup.
Here, we instead replace two conjugacy classes by a new partition of their union.

Let $G$ be a finite group, and let $\mathcal{C}_G=\{C_i\}_{i\in\mathcal{I}}$ be the family of its conjugacy classes.
Assume that $i_1,i_2\in\mathcal{I}$ satisfy the following conditions:
\begin{enumerate}[label=(C\arabic*)]
  \item \label{item:lem:main1} $|C_{i_1}|=|C_{i_2}|$;
  \item \label{item:lem:main2} $\{C_{i_1}^{-1},C_{i_2}^{-1}\}=\{C_{i_1},C_{i_2}\}$;
  \item \label{item:lem:main3} If $U=\underline{C_{i_1}}+\underline{C_{i_2}}$ and $S=\underline{C_{i_1}}-\underline{C_{i_2}}$, then for every $j\in\mathcal{I}\setminus\{i_1,i_2\}$ we have $\underline{C_j}S=\lambda_jS$; in other words, $S$ is a simultaneous eigenvector for these class sums.
  \item \label{item:lem:main4} $S^2$ can be expressed as a linear combination of $\{\underline{C_i}\}_{i\in\mathcal{I}\setminus\{i_1,i_2\}}\cup\{U\}$.
\end{enumerate}

\begin{lemma}\label{lem:main}
Assume that a finite group $G$ has $i_1,i_2\in\mathcal{I}$ satisfying $i_1\neq i_2$ and \ref{item:lem:main1}--\ref{item:lem:main4}.
Let $D_1,D_2$ be a partition of $C_{i_1}\cup C_{i_2}$ satisfying
\begin{enumerate}[label=(D\arabic*)]
  \item \label{item:lem:main1D} $|D_1|=|D_2|$;
  \item \label{item:lem:main2D} $\{D_1^{-1},D_2^{-1}\}=\{D_1,D_2\}$;
  \item \label{item:lem:main3D} if $T=\underline{D_1}-\underline{D_2}$, then for every $j\in\mathcal{I}\setminus\{i_1,i_2\}$ we have $\underline{C_j}T=\lambda_jT$;
  \item \label{item:lem:main4D} $T^2=S^2$.
\end{enumerate}
Then
$\mathcal{S}'=\{C_i\}_{i\in\mathcal{I}\setminus\{i_1,i_2\}}\cup\{D_1,D_2\}$
is a Schur partition, and $\mathcal{S}'$ is algebraically isomorphic to $\mathcal{C}_G$.
\end{lemma}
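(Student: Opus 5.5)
The plan is to compare the linear span $\mathcal A'$ of $\{\underline{C_j}\}_{j\ne i_1,i_2}\cup\{\underline{D_1},\underline{D_2}\}$ with the centre $Z=\mathcal A(\mathcal C_G)$, using the change of basis $\underline{C_{i_1}}=\tfrac12(U+S)$, $\underline{C_{i_2}}=\tfrac12(U-S)$ and $\underline{D_1}=\tfrac12(U+T)$, $\underline{D_2}=\tfrac12(U-T)$. Here $U$ is common to both families because $D_1\cup D_2=C_{i_1}\cup C_{i_2}$. Let $V=\operatorname{Span}\bigl(\{\underline{C_j}\}_{j\ne i_1,i_2}\cup\{U\}\bigr)$; then $Z=V\oplus\mathbb C S$ and $\mathcal A'=V\oplus\mathbb C T$. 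I will define the linear map $\Phi:Z\to\mathcal A'$ by $\Phi|_V=\mathrm{id}$ and $\Phi(S)=T$. Then $\Phi(\underline{C_j})=\underline{C_j}$ for $j\ne i_1,i_2$ and $\Phi(\underline{C_{i_r}})=\underline{D_r}$. The main claim is that $\Phi$ is multiplicative. This immediately shows that $\mathcal A'$ is a subring, so it is an S-ring, since (S1) holds because $\{e\}$ is a class distinct from $C_{i_1},C_{i_2}$, and (S2) follows from \ref{item:lem:main2D}. It also shows that the structure constants in the basis $\{\underline{C_j}\}\cup\{\underline{D_1},\underline{D_2}\}$ coincide with those of $\mathcal C_G$ under the bijection $C_j\mapsto C_j$, $C_{i_r}\mapsto D_r$, which is the required algebraic isomorphism.

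To verify multiplicativity it suffices to check products of basis elements of $Z$ written as $V\oplus\mathbb C S$.
\begin{itemize}
\item Products within $V$: $V$ is a subalgebra of $Z$, and $\Phi$ is the identity on it. To see that $V$ is closed under multiplication, note that products of the $\underline{C_j}$ with $j\ne i_1,i_2$ and $U$ lie in $Z$, so only the $S$-coefficient needs controlling. Here I will use the natural inner product on $\mathbb CG$ in which group elements are orthonormal. Under it $S$ is orthogonal to every $\underline{C_j}$ with $j\ne i_1,i_2$, and also to $U$ by \ref{item:lem:main1}. Each $\underline{C_j}$ with $j\ne i_1,i_2$ has $S$ as an eigenvector by \ref{item:lem:main3}, and $U$ acts on $S$ via $US=S^2+2\underline{C_{i_2}}S$. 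I plan to show $\langle xy,S\rangle=0$ for $x,y\in V$ by moving one factor across with the adjoint, $\langle xy,S\rangle=\langle y,x^*S\rangle$, where $x^*$ is obtained by inversion. By \ref{item:lem:main2}, $V$ and $\mathbb C S$ are closed under $*$. For $x=\underline{C_j}$ we get $x^*S\in\mathbb C S\perp V$, so $\langle xy,S\rangle=0$. The case $x=U$ is the one that needs care.
\item Products $vS$ with $v=\underline{C_j}$, $j\ne i_1,i_2$: we have $\Phi(vS)=\lambda_jT=vT$ by \ref{item:lem:main3} and \ref{item:lem:main3D}.
\item The product $S\cdot S$: by \ref{item:lem:main4}, $S^2\in V$, so $\Phi(S^2)=S^2=T^2$ by \ref{item:lem:main4D}.
\item The product $U S$: I will show $US=\mu S$ for a scalar $\mu$. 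Then the analogous relation $UT=\mu T$ is needed, and $\mu$ must be the same scalar.
\end{itemize}

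The main obstacle is the handling of $U$, namely the eigen-relation $US=\mu S$, $UT=\mu T$, and closure of $V$ involving $U$. My approach is as follows. Since $Z$ is commutative semisimple, $S$ lies in the ideal $\mathcal M=\{x\in Z: \underline{C_j}x=\lambda_jx\ \forall j\ne i_1,i_2\}$. Also $\underline{C_e}=1$ forces $\lambda_e=1$. I will try to show $\mathcal M=\mathbb CS$ as follows. The elements $\underline{C_j}$ ($j\ne i_1,i_2$) together with $U$ and $S$ span $Z$, and $\mathcal M$ is invariant under $U$. Then $S^2=U S-2\underline{C_{i_2}}S$, and \ref{item:lem:main4} gives $S^2=\alpha U+w$ with $w\in\operatorname{Span}\{\underline{C_j}\}$. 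Together these should let me express $US$ inside $\mathcal M\cap(\ldots)$ and pin it to a multiple of $S$, with $\mu$ readable from the coefficient of $\underline{C_{i_1}}$. If $\mathcal M$ turns out larger, I will fall back to working with primitive idempotents: $S$ is supported on characters $\chi$ where the $\omega_\chi(\underline{C_j})$ take fixed values $\lambda_j$. Applying $\omega_\chi$ to the relation $S^2=\alpha U+w$ yields $\omega_\chi(S)^2=\alpha\,\omega_\chi(U)+\omega_\chi(w)$, which constrains $\omega_\chi(U)$ to be constant on the support. The same computation applies verbatim to $T$ using \ref{item:lem:main3D} and \ref{item:lem:main4D}, since $T^2=S^2$ gives the identical identity, so the same $\mu$ arises. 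The same eigenvalue argument also gives $\langle Uy,S\rangle=\langle y,US\rangle=\mu\langle y,S\rangle=0$ for $y\in V$, completing closure of $V$.
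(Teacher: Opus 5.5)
\textbf{There is a genuine gap.} Your overall architecture matches the paper's: your $V$ is the paper's $\mathcal A_0$, your $\Phi$ is its $\Psi$, and multiplicativity is checked on $V\oplus\mathbb C S$. Your inner-product argument for closure of $V$ is a legitimate alternative to the paper's Step 2, but only once $US\in\mathbb C S$ is known. The gap is exactly the step you flag: showing $US=\mu S$ and $UT=\mu T$ with the \emph{same} $\mu$.

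Your primary route cannot work. $\mathcal M$ is an ideal of $Z$, so $S^2\in\mathcal M$. Since $Z$ is semisimple it has no nonzero nilpotents, so $S^2\neq 0$. By (C4), $S^2\in V$, whereas $S\notin V$. Hence $S$ and $S^2$ are linearly independent and $\dim\mathcal M\ge 2$ always. For instance, in $\mathfrak A_6$, $\mathcal M$ is spanned by the central idempotents of the two degree-$8$ characters. Your fallback does not close the gap either, for two reasons. First, evaluating $S^2=\alpha U+w$ at $\omega_\chi$ gives a single equation linking $\omega_\chi(S)^2$ and $\omega_\chi(U)$. That does not force $\omega_\chi(U)$ to be constant on the support of $S$, and it says nothing at all when $\alpha=0$. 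Second, it cannot be applied ``verbatim'' to $T$: $T$ is not central, so $\omega_\chi(T)$ is not defined.

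The missing idea is the full group sum, which is Step~1 of the paper's proof. Write $\underline G=\sum_{j\ne i_1,i_2}\underline{C_j}+U$ and use $\underline G\,\underline X=|X|\,\underline G$ for every subset $X\subseteq G$. Then (C1) gives $\underline G S=0$ and (D1) gives $\underline G T=0$. Combining these with (C3) and (D3),
\[
US=-\Bigl(\sum_{j\ne i_1,i_2}\lambda_j\Bigr)S,\qquad UT=-\Bigl(\sum_{j\ne i_1,i_2}\lambda_j\Bigr)T .
\]
This gives both eigen-relations with the same scalar, needs no character theory, and works for the non-central $T$.

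Your proposal never uses (D1), and this step is precisely where it is needed. Alternatively, (D1) can be derived from (C1) and (D4) via the augmentation homomorphism $\epsilon$: $\epsilon(T)^2=\epsilon(S)^2=0$.

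With this in hand, the rest of your plan goes through. Your adjoint argument gives $\langle Uy,S\rangle=\mu\langle y,S\rangle=0$ for $y\in V$, so $V=Z\cap S^{\perp}$ is closed under multiplication.

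One minor point: your claim that $\{e\}$ is a class different from $C_{i_1},C_{i_2}$ needs a line of proof. If $C_{i_1}=\{e\}$, then (C1) and (C2) give $C_{i_2}=\{z\}$ with $z^2=e$. Then $S^2=2S\notin V$, contradicting (C4).
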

\begin{proof}
  First define the vector subspace of $\mathbb{C}G$
  \[
    \mathcal{A}_0:=\operatorname{Span}_{\mathbb{C}}
      \bigl(\{\underline{C_i}\mid i\in\mathcal{I}\setminus\{i_1,i_2\}\}\cup\{U\}\bigr).
  \]
  Then, as vector spaces, we have
  $\mathcal{A}(\mathcal{C}_G)=\mathcal{A}_0\oplus \mathbb{C}S$.

  \medskip
  \noindent
  \textbf{Step 1: Each element of $\mathcal{A}_0$ acts on $S$ and on $T$ by the same scalar.}
  By assumption \ref{item:lem:main3}, for $j\in\mathcal{I}\setminus\{i_1,i_2\}$ we have
  $\underline{C_j}S=\lambda_jS$.
  Using the full group sum
  $\underline{G}:=\sum_{g\in G}g$, we have
  $\underline{G}\,\underline{X}=|X|\,\underline{G}$
  for every subset $X\subseteq G$.
  By assumption \ref{item:lem:main1}, $|C_{i_1}|=|C_{i_2}|$, and hence
  \[
  \underline{G}\,S
  =(|C_{i_1}|-|C_{i_2}|)\underline{G}
  =0.
  \]
  On the other hand,
  \[
  \underline{G}
  =\sum_{j\in\mathcal{I}\setminus\{i_1,i_2\}}\underline{C_j}
  +U.
  \]
  Therefore
  \[
    US
    =-\sum_{j\in\mathcal{I}\setminus\{i_1,i_2\}}\underline{C_j}S
    =-\sum_{j\in\mathcal{I}\setminus\{i_1,i_2\}}\lambda_j S.
  \]
  Hence for every $X\in\mathcal{A}_0$ there exists a scalar $\chi(X)\in\mathbb{C}$ such that
  $XS=\chi(X)S$.
  Similarly, from assumptions \ref{item:lem:main1D} and \ref{item:lem:main3D},
  $\underline{G}\,T=(|D_1|-|D_2|)\underline{G}=0$, and
  \[
    UT
    =-\sum_{j\in\mathcal{I}\setminus\{i_1,i_2\}}\underline{C_j}T
    =-\sum_{j\in\mathcal{I}\setminus\{i_1,i_2\}}\lambda_j T.
  \]
  Thus the same linear functional $\chi:\mathcal{A}_0\to\mathbb{C}$ satisfies
  $XT=\chi(X)T$ for all $X\in\mathcal{A}_0$.

  \medskip
  \noindent
  \textbf{Step 2: $\mathcal{A}_0$ is a subalgebra.}
  Take $X,Y\in\mathcal{A}_0$.
  Since $\mathcal{A}(\mathcal{C}_G)=\mathcal{A}_0\oplus\mathbb{C}S$, there exist $Z\in\mathcal{A}_0$ and $\alpha\in\mathbb{C}$ such that
  $XY=Z+\alpha S$.
  Multiplying this equality on the right by $S$, we obtain
  $\chi(X)\chi(Y)S=\chi(Z)S+\alpha S^2$.
  By assumption \ref{item:lem:main4}, $\alpha S^2\in\mathcal{A}_0$.
  Viewing this equality with respect to the decomposition $\mathcal{A}_0\oplus\mathbb{C}S$, the left-hand side has no $\mathcal{A}_0$-component, and hence $\alpha S^2=0$.
  By \ref{item:lem:main2}, the coefficient of the identity element in $S^2$ is either $2|C_{i_1}|$, if $C_{i_1}^{-1}=C_{i_1}$ and $C_{i_2}^{-1}=C_{i_2}$, or $-2|C_{i_1}|$, if $C_{i_1}^{-1}=C_{i_2}$.
  In particular $S^2\neq0$.
  Therefore $\alpha=0$, and hence $XY=Z\in\mathcal{A}_0$.
  Thus $\mathcal{A}_0$ is a subalgebra.

  \medskip
  \noindent
  \textbf{Step 3: $\mathcal{S}'$ is a Schur partition.}
  Since $D_1\sqcup D_2=C_{i_1}\cup C_{i_2}$, together with the other $C_i$ these sets form a partition of $G$.
  The singleton $\{e\}$ is also a part of $\mathcal{S}'$.
  Indeed, this is clear unless one of $C_{i_1},C_{i_2}$ is the identity class; in that case, assumptions \ref{item:lem:main1} and \ref{item:lem:main1D} imply that $D_1$ and $D_2$ are both singletons.
  By \ref{item:lem:main2D}, $\{D_1^{-1},D_2^{-1}\}=\{D_1,D_2\}$, so $\mathcal{S}'$ is closed under inverses.
  Also, since $\underline{D_1}=(U+T)/2$ and $\underline{D_2}=(U-T)/2$, we have
  \[
  \mathcal{A}(\mathcal{S}')=\mathcal{A}_0\oplus \mathbb{C}T.
  \]
  By Step 1, each element of $\mathcal{A}_0$ acts by a scalar on $T$, and by assumption \ref{item:lem:main4D} we have $T^2=S^2\in\mathcal{A}_0$.
  Hence $\mathcal{A}(\mathcal{S}')$ is a subalgebra of $\mathbb{C}G$, and it is spanned by $\{\underline{X} \mid X\in\mathcal{S}'\}$.
  Therefore $\mathcal{S}'$ is a Schur partition.

  \medskip
  \noindent
  \textbf{Step 4: Equality of multiplication tables and algebraic isomorphism.}
  Consider the bases
  \[
    \mathcal{B}
    :=\{\underline{C_i}\mid i\in\mathcal{I}\setminus\{i_1,i_2\}\}\cup\{U,S\},
    \qquad
    \mathcal{B}'
    :=\{\underline{C_i}\mid i\in\mathcal{I}\setminus\{i_1,i_2\}\}\cup\{U,T\}.
  \]
  By Step 2, products inside $\mathcal{A}_0$ are the same in both algebras.
  By Step 1, products of elements of $\mathcal{A}_0$ with $S$ and with $T$ are described by the same functional $\chi$.
  Finally, by assumption \ref{item:lem:main4D}, $S^2=T^2$.
  Therefore the multiplication tables with respect to $\mathcal{B}$ and $\mathcal{B}'$ coincide completely.
  Thus the linear map
  \[
    \Psi:\mathcal{A}(\mathcal{C}_G)\longrightarrow \mathcal{A}(\mathcal{S}'),
    \qquad
    \Psi|_{\mathcal{A}_0}=\operatorname{id},\quad \Psi(S)=T
  \]
  is an algebra isomorphism.
  Finally,
  \[
    \Psi(\underline{C_{i_1}})
    =\Psi\Bigl(\frac{U+S}{2}\Bigr)
    =\frac{U+T}{2}
    =\underline{D_1},
    \qquad
    \Psi(\underline{C_{i_2}})
    =\Psi\Bigl(\frac{U-S}{2}\Bigr)
    =\frac{U-T}{2}
    =\underline{D_2},
  \]
  and all other basic sets are fixed.
  Hence the correspondence between basic sets
  \[
    C_i\mapsto C_i\ (i\neq i_1,i_2),\qquad
    C_{i_1}\mapsto D_1,
    \qquad
    C_{i_2}\mapsto D_2
  \]
  preserves structure constants.
  In other words, it gives an algebraic isomorphism of S-rings.
\end{proof}

\begin{remark}
Assumptions \ref{item:lem:main1}--\ref{item:lem:main4} include the assertion that the partition obtained by replacing $C_{i_1}$ and $C_{i_2}$ with their union,
\[
  \{C_i\mid i\in\mathcal{I}\setminus\{i_1,i_2\}\}
  \cup\{C_{i_1}\cup C_{i_2}\},
\]
gives a fusion association scheme.
Indeed, as shown in Step 2 of the proof,
\[
  \mathcal{A}_0
  =\operatorname{Span}_{\mathbb{C}}\Bigl(
      \{\underline{C_i}\mid i\in\mathcal{I}\setminus\{i_1,i_2\}\}
      \cup\{\underline{C_{i_1}}+\underline{C_{i_2}}\}
    \Bigr)
\]
is a subalgebra of the original Bose--Mesner algebra.
\end{remark}

The original partition $D_1=C_{i_1}$, $D_2=C_{i_2}$ satisfies the hypotheses of \cref{lem:main} whenever $C_{i_1},C_{i_2}$ satisfy \ref{item:lem:main1}--\ref{item:lem:main4}.
In order to estimate how many partitions $D_1,D_2$ other than $C_{i_1},C_{i_2}$ may satisfy the hypotheses of \cref{lem:main}, we consider the following graphs.

For $C_{i_1},C_{i_2}$ satisfying \ref{item:lem:main1}--\ref{item:lem:main4} and for $j\in\mathcal{I}\setminus\{i_1,i_2\}$, let $V:=C_{i_1}\cup C_{i_2}$.
Let $\Gamma_j$ be the directed graph with vertex set $V$ and arc set $\{(x,y)\in V\times V\mid yx^{-1}\in C_j\}$, and let $\Gamma_V$ be the directed graph with vertex set $V$ and arc set $\{(x,y)\in V\times V\mid yx^{-1}\in V\}$.
Let $A_j$ and $A_V$ denote the adjacency matrices of $\Gamma_j$ and $\Gamma_V$, respectively.
Define the sign vector $\chi_S$ on $V$ by
\[
  \chi_S(x)=
  \begin{cases}
  1 & \text{if } x\in C_{i_1},\\
  -1 & \text{if } x\in C_{i_2}.
  \end{cases}
\]

\begin{lemma}
  \label{lem:main2}
  The matrix $A_j$ has eigenvalue $\lambda_j$ appearing in \ref{item:lem:main3}.
  Moreover, $A_V$ has eigenvalue $\lambda_U$, where
  \[
  \lambda_U:=-\sum_{j\in\mathcal{I}\setminus\{i_1,i_2\}}\lambda_j.
  \]

  Furthermore, if $\{D_1,D_2\}$ is any partition satisfying the hypotheses of \cref{lem:main}, and if $\chi_T$ is the sign vector on $V$ which is $1$ on $D_1$ and $-1$ on $D_2$, then $\chi_T$ belongs to the $\lambda_j$-eigenspace of $A_j$ and to the $\lambda_U$-eigenspace of $A_V$.
\end{lemma}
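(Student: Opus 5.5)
The plan is to translate the group-algebra identities into matrix identities on the vertex set $V$ by comparing coefficients. The key observation: for a subset $X\subseteq G$ and a function $\phi$ supported on $V$, identified with $\sum_{v\in V}\phi(v)v\in\mathbb{C}G$, the coefficient of $y\in V$ in $\underline{X}\,\phi$ is
\[
  \sum_{\substack{x\in V\\ yx^{-1}\in X}}\phi(x)=(A_X\phi)(y),
\]
where $A_X$ is the adjacency matrix on $V$ with arcs $(x,y)$ for $yx^{-1}\in X$. This uses the convention that arcs go from $x$ to $y$ when $yx^{-1}\in X$; then $A_X$ acts on the column vector $\phi$ by summing over in-neighbours. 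If the paper's orientation is the transpose, I would instead use $\phi\,\underline{X}$ and the fact that $\underline{C_j}$ is central, so left and right multiplication agree. In either case $\underline{C_j}S=\underline{C_j}\cdot\chi_S$ as elements of $\mathbb{C}G$, since $S=\sum_{v\in V}\chi_S(v)v$.

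First, I take \ref{item:lem:main3}, namely $\underline{C_j}S=\lambda_jS$, and restrict both sides to coefficients at $y\in V$. This gives $A_j\chi_S=\lambda_j\chi_S$. Since $\chi_S\neq0$, $\lambda_j$ is an eigenvalue of $A_j$. Coefficients outside $V$ are simply discarded; this is harmless, because we only need the restriction to $V$.

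Second, for $A_V$ I use Step 1 of the proof of \cref{lem:main}, which gives $US=\lambda_US$ with $\lambda_U=-\sum_j\lambda_j$. Since $U=\underline{V}$, restricting coefficients to $V$ again yields $A_V\chi_S=\lambda_U\chi_S$.

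Third, for a partition $\{D_1,D_2\}$ as in \cref{lem:main}, I note that $T=\sum_{v\in V}\chi_T(v)v$. Hypothesis \ref{item:lem:main3D} then gives $A_j\chi_T=\lambda_j\chi_T$ by the same coefficient comparison. Step 1 of the proof of \cref{lem:main} gives $UT=\lambda_UT$, using \ref{item:lem:main1D} and $\underline{G}\,T=0$, and this yields $A_V\chi_T=\lambda_U\chi_T$.

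The only delicate point is matching the orientation convention of the arcs with left versus right multiplication. Centrality of $\underline{C_j}$ and of $U$ handles this: $U$ is a sum of class sums, hence central.
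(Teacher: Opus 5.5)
There is a genuine gap, and it sits exactly at the point you flag as delicate. The paper uses the standard convention: $(A_j)_{xy}=1$ if and only if $(x,y)$ is an arc, i.e.\ $yx^{-1}\in C_j$. Your coefficient computation gives, at $y\in V$, the sum $\sum_{x\in V,\ yx^{-1}\in C_j}\phi(x)=\sum_x (A_j)_{xy}\phi(x)=(A_j^{\mathsf T}\phi)(y)$. So what you have actually shown is $A_j^{\mathsf T}\chi_S=\lambda_j\chi_S$ and $A_j^{\mathsf T}\chi_T=\lambda_j\chi_T$.

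Your proposed repair does not work. Because $\underline{C_j}$ is central, $\phi\,\underline{C_j}$ and $\underline{C_j}\,\phi$ are the same element of $\mathbb{C}G$; concretely, $x^{-1}y\in C_j\iff yx^{-1}\in C_j$. Switching sides therefore reproduces the same coefficient formula and can never produce the transposed matrix. Transposing corresponds instead to replacing $C_j$ by $C_j^{-1}$: the coefficient of $y$ in $\underline{C_j^{-1}}\,\phi$ is $\sum_{x:\ xy^{-1}\in C_j}\phi(x)=(A_j\phi)(y)$. What you need is therefore $\underline{C_j^{-1}}S=\lambda_jS$ and $\underline{C_j^{-1}}T=\lambda_jT$.

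The paper gets these from (C2) and (D2), which your argument never uses. It applies the linear anti-automorphism $g\mapsto g^{-1}$ of $\mathbb{C}G$. This sends $\underline{C_j}$ to $\underline{C_j^{-1}}$ and, by (C2), sends $S$ to $\varepsilon S$ with $\varepsilon=\pm1$. Hence $\underline{C_j}S=\lambda_jS$ becomes $\varepsilon S\,\underline{C_j^{-1}}=\varepsilon\lambda_jS$, and centrality gives $\underline{C_j^{-1}}S=\lambda_jS$. The same argument with (D2) handles $T$.

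A normality argument would also do. Left multiplication by $\underline{C_j}$ is a normal operator whose adjoint is left multiplication by $\underline{C_j^{-1}}$. Moreover, $\lambda_j$ is real, since $S$ and $\underline{C_j}S$ have integer coefficients. An eigenvector of a normal operator with eigenvalue $\lambda_j$ is then an eigenvector of its adjoint with eigenvalue $\overline{\lambda_j}=\lambda_j$.

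Some of your argument survives.
\begin{itemize}
\item The bare claim that $\lambda_j$ is an eigenvalue of $A_j$ follows from what you did, since $A_j$ and $A_j^{\mathsf T}$ have the same spectrum.
\item Your $A_V$ argument is correct, but not because $U$ is central. The real reason is that $V^{-1}=V$ by (C2), so $A_V$ is symmetric and orientation is irrelevant; the paper notes exactly this.
\end{itemize}
The assertion that $\chi_T$ (and $\chi_S$) lies in the $\lambda_j$-eigenspace of $A_j$ itself, as opposed to $A_j^{\mathsf T}$, still requires the missing step above.
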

\begin{proof}
  For a function $\chi$ on $V$, put
  \[
  X_{\chi}:=\sum_{x\in V}\chi(x)x\in \mathbb{C}G.
  \]
  For $x\in V$, the coefficient of $x$ in $\underline{C_j^{-1}}X_{\chi}$ is
  \[
    \sum_{\substack{y\in V\\ yx^{-1}\in C_j}}
     \chi(y)
    =
    \sum_{y\in V}(A_j)_{xy}\chi(y).
  \]
  Therefore, if $\underline{C_j^{-1}}X_{\chi}=\lambda_j X_{\chi}$, then $A_j\chi=\lambda_j\chi$.
  Since $V^{-1}=V$ by \ref{item:lem:main2}, the same coefficient calculation applies to $\Gamma_V$.
  Similarly, if $U X_{\chi}=\lambda_U X_{\chi}$, then $A_V\chi=\lambda_U\chi$.

  By \ref{item:lem:main2}, the inverse anti-automorphism of $\mathbb C G$ sends $S$ to $\varepsilon S$ for some $\varepsilon\in\{\pm1\}$.
  Applying this anti-automorphism to $\underline{C_j}S=\lambda_jS$, and using that class sums are central, gives $\underline{C_j^{-1}}S=\lambda_jS$.
  Hence, by \ref{item:lem:main3}, the sign vector $\chi_S$ is a $\lambda_j$-eigenvector of $A_j$.
  Moreover, in the proof of \cref{lem:main} we have also shown $US=\lambda_U S$, and hence $\chi_S$ is a $\lambda_U$-eigenvector of $A_V$.

  The same argument using \ref{item:lem:main2D} and \ref{item:lem:main3D} gives $\underline{C_j^{-1}}T=\lambda_jT$.
  Hence the sign vector $\chi_T$ also belongs to the $\lambda_j$-eigenspace of $A_j$.
  Similarly, since $UT=\lambda_U T$ was also shown in the proof of \cref{lem:main}, the sign vector $\chi_T$ belongs to the $\lambda_U$-eigenspace of $A_V$ as well.
\end{proof}

From \cref{lem:main2}, we obtain the following corollary.
\begin{corollary}
  \label{cor:unique}
  If there exists $j\in\mathcal{I}\setminus\{i_1,i_2\}$ such that the $\lambda_j$-eigenspace of $A_j$ is one-dimensional, or if the $\lambda_U$-eigenspace of $A_V$ is one-dimensional, then every partition $\{D_1,D_2\}$ satisfying the hypotheses of \cref{lem:main} is equal to $\{C_{i_1},C_{i_2}\}$.
\end{corollary}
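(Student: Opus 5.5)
The plan is to combine \cref{lem:main2} with the fact that both $\chi_S$ and $\chi_T$ are $\pm1$-valued sign vectors on $V$ with equally sized positive and negative parts. Suppose, for definiteness, that for some $j\in\mathcal{I}\setminus\{i_1,i_2\}$ the $\lambda_j$-eigenspace of $A_j$ is one-dimensional. The case in which the $\lambda_U$-eigenspace of $A_V$ is one-dimensional is handled by the identical argument with $A_V$ and $\lambda_U$ in place of $A_j$ and $\lambda_j$. By \cref{lem:main2}, $\chi_S$ lies in this eigenspace, and $\chi_S\neq0$ because it takes values $\pm1$ on the nonempty set $V$. Hence $\chi_S$ spans the eigenspace.

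Now let $\{D_1,D_2\}$ be any partition satisfying the hypotheses of \cref{lem:main}. By the same lemma, $\chi_T$ lies in the same eigenspace, so $\chi_T=c\,\chi_S$ for some scalar $c$. Evaluating at any point $x\in V$ gives $\chi_T(x)=c\,\chi_S(x)$ with both $\chi_T(x)$ and $\chi_S(x)$ in $\{\pm1\}$. Therefore $c\in\{\pm1\}$, and $c$ is the same constant for all $x$.

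If $c=1$, then $\chi_T=\chi_S$, so $D_1=\{x:\chi_T(x)=1\}=C_{i_1}$ and $D_2=C_{i_2}$. If $c=-1$, then $D_1=C_{i_2}$ and $D_2=C_{i_1}$. In either case, the unordered partition $\{D_1,D_2\}$ equals $\{C_{i_1},C_{i_2}\}$, as claimed.

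There is no real obstacle here. The only care needed is to record that the sign vectors are nonzero and $\pm1$-valued, which forces the proportionality constant to be $\pm1$ rather than an arbitrary scalar. One should also remember that the conclusion concerns the unordered pair, which is why the case $c=-1$ is allowed. Conditions (D1) and (D2) are not even needed beyond their use in \cref{lem:main2}.
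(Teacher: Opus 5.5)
Your proposal is correct and follows the paper's proof essentially verbatim. \cref{lem:main2} places $\chi_S$ and $\chi_T$ in the same one-dimensional eigenspace, so $\chi_T=c\,\chi_S$; since both vectors are $\pm1$-valued, $c=\pm1$, and therefore $\{D_1,D_2\}=\{C_{i_1},C_{i_2}\}$ as an unordered pair.
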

\begin{proof}
By \cref{lem:main2}, the sign vector $\chi_T$ associated with any such partition belongs to the same relevant eigenspace as the original vector $\chi_S$.
If that eigenspace is one-dimensional, then $\chi_T=a\chi_S$ for some scalar $a$.
Since both vectors take only the values $\pm1$, we have $a=\pm1$.
Hence the two parts of the partition are exactly $C_{i_1}$ and $C_{i_2}$, up to order.
\end{proof}

From now on, $\mathcal{C}^{\tw}_G$ denotes a Schur partition of the form
\[
\mathcal{S}'=\{C_i\}_{i\in\mathcal{I}\setminus\{i_1,i_2\}}\cup\{D_1,D_2\}
\]
that is not combinatorially isomorphic to $\mathcal{C}_G$.

\paragraph{Computational verification.}
The finite computations cited in \cref{sec:X(A_n),sec:other_finite_groups_twist} were run in GAP 4.15.1 \cite{GAP4}.
The scripts are included under 
\begin{center}
\path{https://github.com/Kurihara190/Twisted-primitive-group-association-schemes}    
\end{center}
 and can be run from the repository root, for example as \texttt{gap A6.g}.
Each script writes a corresponding log under \path{out/}; these logs record the true/false checks used in the text.
The additional script \path{checktwistA6.g} gives an independent AssociationSchemes-package check for the $\mathfrak A_6$ twist, with output in \path{out/result_checktwistA6.txt}.
The scripts for the small-group search and the independent checks of Cayley association schemes use the GAP packages SmallGrp and AssociationSchemes, and the larger alternating-group spectral checks use cvec where needed \cite{SmallGrp,AssociationSchemesPackage,cvecPackage}.

\section{Twists for alternating groups}\label{sec:X(A_n)}

In this section, we apply \cref{lem:main} to alternating groups.
For $\mathfrak A_6$ and $\mathfrak A_8$, we obtain Schur partitions that are algebraically isomorphic but not combinatorially isomorphic to the corresponding partitions into conjugacy classes.
For $\mathfrak A_4$, $\mathfrak A_5$, $\mathfrak A_7$, and $\mathfrak A_9$, we show that any partition satisfying the hypotheses of \cref{lem:main} is the original one, up to interchanging its two parts.

\subsection{$\mathfrak{X}(\mathfrak{A}_6)$ and a twist}\label{sec:A6}
Let $\mathfrak{A}_6$ be the alternating group of degree $6$.
The conjugacy classes of $\mathfrak{A}_6$ are
$C_0, C_{(2,2)}, C_{(3)}, C_{(3,3)}, C_{(4,2)}$
and the two split $5$-cycle classes
$C_{(5)_1}, C_{(5)_2}$.
Here the subscript $\lambda$ in $C_\lambda$ denotes the cycle type.
In order to fix labels for the two $5$-cycle classes, we set
\[
  x_0:=(1\,2\,3\,4\,5)\in C_{(5)_1}.
\]
With this convention, one can determine to which split class an arbitrary $5$-cycle belongs as follows.
Let $x=(a_1\,a_2\,a_3\,a_4\,a_5)$, and let $a_6$ be the fixed point of $x$.
Define $\sigma_x\in\mathfrak{S}_6$ by
$\sigma_x(i)=a_i$ ($1\le i\le 6$).
Then $x=\sigma_x x_0\sigma_x^{-1}$, and
\[
  x\in C_{(5)_1}\Longleftrightarrow \sigma_x\in\mathfrak{A}_6,
  \qquad
  x\in C_{(5)_2}\Longleftrightarrow \sigma_x\notin\mathfrak{A}_6.
\]
Indeed, the centralizer of $x_0$ in $\mathfrak{S}_6$ is $\langle x_0\rangle$, all of whose elements are even permutations.
Therefore, if $x=\sigma x_0\sigma^{-1}=\tau x_0\tau^{-1}$, then $\tau^{-1}\sigma\in\langle x_0\rangle$, so $\sigma$ and $\tau$ have the same parity.
Hence the criterion is well-defined.
Thus, for example, we may take $(1\,2\,3\,4\,6)$ as a representative of $C_{(5)_2}$.
The class sizes are
\[
|C_0|=1,
|C_{(2,2)}|=45,
|C_{(3)}|=40,
|C_{(3,3)}|=40,
|C_{(4,2)}|=90,
|C_{(5)_1}|=72,
|C_{(5)_2}|=72.
\]

First, in the original group association scheme $\mathfrak{X}(\mathfrak{A}_6)$, put
\[
U:=\underline{C_{(5)_1}}+\underline{C_{(5)_2}},
\qquad
S:=\underline{C_{(5)_1}}-\underline{C_{(5)_2}}.
\]
The intersection matrices immediately imply
\begin{align}
\underline{C_{(2,2)}}\, S &=0, & \underline{C_{(4,2)}}\, S&=0, \label{eq:s_eigs}\\
\underline{C_{(3)}}\, S &=-5\,S, & \underline{C_{(3,3)}}\, S&=-5\,S, \notag\\
S^2&=144\,\underline{C_0}-18\,\underline{C_{(3)}}-18\,\underline{C_{(3,3)}}+9\,U. \label{eq:s_square}
\end{align}
The intersection matrices and the calculations leading to \eqref{eq:s_eigs} and \eqref{eq:s_square} are included in \path{A6.g}; the corresponding output is \path{out/result_A6.txt}.

We construct $D_1,D_2$ for which analogous equalities hold for $T=\underline{D_1}-\underline{D_2}$.
For a $5$-cycle $x=(a_1\,a_2\,a_3\,a_4\,a_5)$, define the edge set of the undirected cycle of $x$ by
\[
  E(x):=\bigl\{\{a_1,a_2\},\{a_2,a_3\},\{a_3,a_4\},\{a_4,a_5\},\{a_5,a_1\}\bigr\},
\]
and define the number of edges crossing the cut $\{1,2,3\}|\{4,5,6\}$ by
\[
  \operatorname{cr}(x)
  :=\bigl|\{\{u,v\}\in E(x)\mid
  |\{u,v\}\cap\{1,2,3\}|=|\{u,v\}\cap\{4,5,6\}|=1\}\bigr|.
\]
The $5$-element set $\{a_1,a_2,a_3,a_4,a_5\}$ is always divided into a $2$-element subset and a $3$-element subset with respect to the cut $\{1,2,3\}|\{4,5,6\}$.
Thus $\operatorname{cr}(x)\in\{2,4\}$.
Moreover $\operatorname{cr}(x)=\operatorname{cr}(x^{-1})$, and $x^2$ gives the complementary $5$-cycle on the same five points, so
\[
\operatorname{cr}(x^2)=6-\operatorname{cr}(x).
\]
For $r=1,2$, define
\[
C^{(2)}_{(5)_r}=\{x\in C_{(5)_r}\mid \operatorname{cr}(x)=2\},
\quad
C^{(4)}_{(5)_r}=\{x\in C_{(5)_r}\mid \operatorname{cr}(x)=4\}.
\]
Then $|C^{(2)}_{(5)_r}|=|C^{(4)}_{(5)_r}|=36$.
We show this below.
For each $5$-cycle $x$, there exists a unique edge $\{b_1,b_2\}$ of $E(x)$ satisfying the following condition: $\{b_1,b_2\}$ does not cross the cut $\{1,2,3\}|\{4,5,6\}$, while the two edges $\{b_1,c_1\}$ and $\{b_2,c_2\}$ of $E(x)$ adjacent to it both cross this cut.
Let $d$ be the remaining vertex of $E(x)$.
For $x=(c_1,b_1,b_2,c_2,d)$, define
$x'=(b_1,c_1,c_2,b_2,d)$.
Then $\sigma_{x'}=(b_1\,c_1)(b_2\,c_2)\sigma_x$, and therefore $x$ and $x'$ belong to the same conjugacy class.
Moreover, $\operatorname{cr}(x')=6-\operatorname{cr}(x)$.
For example, $x=(3\,4\,5\,2\,1)$ and $x'=(4\,3\,2\,5\,1)$ are mapped to each other by the above correspondence, and the edge sets $E(x)$ and $E(x')$ are as shown in \cref{fig:cr-switch-example}.
Applying the same operation to $x'$ brings us back to the original $x$.
Hence this correspondence is a bijection on each $C_{(5)_r}$ which exchanges the elements with $\operatorname{cr}=2$ and those with $\operatorname{cr}=4$.
Therefore, in each class $C_{(5)_r}$, the number of elements with $\operatorname{cr}=2$ equals the number of elements with $\operatorname{cr}=4$.

\begin{figure}[htbp]
\centering
\begin{tikzpicture}[
  scale=0.9,
  every node/.style={font=\small},
  point/.style={circle, fill=black, inner sep=1.4pt}
]
  \node at (-3.1,2.05) {$x=(3\,4\,5\,2\,1)$};
  \node at ( 3.1,2.05) {$x'=(4\,3\,2\,5\,1)$};
  \draw[<->] (-0.75,2.05) -- (0.75,2.05);

  \begin{scope}[xshift=-3.1cm]
    \draw (0.1,-1.5) -- (0.1,1.5);
    \node at (-1.15,1.35) {$\{1,2,3\}$};
    \node at ( 1.15,1.35) {$\{4,5,6\}$};

    \coordinate (xthree) at (-0.23, 1.07);
    \coordinate (xfour)  at ( 1.05, 0.65);
    \coordinate (xfive)  at ( 1.05,-0.70);
    \coordinate (xtwo)   at (-0.23,-1.12);
    \coordinate (xone)   at (-1.03,-0.03);

    \draw (xthree) -- (xfour) -- (xfive) -- (xtwo) -- (xone) -- (xthree);
    \node[point] at (xone) {};
    \node[left] at (xone) {$1$};
    \node[point] at (xtwo) {};
    \node[below] at (xtwo) {$2$};
    \node[point] at (xthree) {};
    \node[above] at (xthree) {$3$};
    \node[point] at (xfour) {};
    \node[right] at (xfour) {$4$};
    \node[point] at (xfive) {};
    \node[right] at (xfive) {$5$};
  \end{scope}

  \begin{scope}[xshift=3.1cm]
    \draw (0.1,-1.5) -- (0.1,1.5);
    \node at (-1.15,1.35) {$\{1,2,3\}$};
    \node at ( 1.15,1.35) {$\{4,5,6\}$};

    \coordinate (ythree) at (-0.23, 1.07);
    \coordinate (yfour)  at ( 1.05, 0.65);
    \coordinate (yfive)  at ( 1.05,-0.70);
    \coordinate (ytwo)   at (-0.23,-1.12);
    \coordinate (yone)   at (-1.03,-0.03);

    \draw (yfour) -- (ythree) -- (ytwo) -- (yfive) -- (yone) -- (yfour);
    \node[point] at (yone) {};
    \node[left] at (yone) {$1$};
    \node[point] at (ytwo) {};
    \node[below] at (ytwo) {$2$};
    \node[point] at (ythree) {};
    \node[above] at (ythree) {$3$};
    \node[point] at (yfour) {};
    \node[right] at (yfour) {$4$};
    \node[point] at (yfive) {};
    \node[right] at (yfive) {$5$};
  \end{scope}
\end{tikzpicture}
\caption{The correspondence from $x=(3\,4\,5\,2\,1)$ to $x'=(4\,3\,2\,5\,1)$.}
\label{fig:cr-switch-example}
\end{figure}

Now put
\begin{align}
D_1&:=C^{(2)}_{(5)_1}\sqcup C^{(4)}_{(5)_2}, \label{eq:D1_rule}\\
D_2&:=C^{(4)}_{(5)_1}\sqcup C^{(2)}_{(5)_2}. \label{eq:D2_rule}
\end{align}
Equivalently, the same construction can be described in terms of Sylow $5$-subgroups.
For any $5$-cycle $g$, it is known that $g$ and $g^2$ belong to different conjugacy classes.
Thus, for example, if $g\in C_{(5)_1}$, then $g^4=g^{-1}\in C_{(5)_1}$ and $g^2,g^3=(g^2)^{-1}\in C_{(5)_2}$.
For a subgroup $H\le \mathfrak A_6$, write $H^\#:=H\setminus\{e\}$.
Then $D_1$ and $D_2$ can be written as
\begin{equation}\label{eq:D12_sylow_union}
  D_1=\bigsqcup_{g\in\mathcal{G}_1}\langle g\rangle^\#,
  \qquad
  D_2=\bigsqcup_{g\in\mathcal{G}_2}\langle g\rangle^\#,
\end{equation}
where $\mathcal{G}_1$ and $\mathcal{G}_2$ consist of the following eighteen $5$-cycles, respectively:
\begin{align*}
\mathcal{G}_1=\{&
(1\,2\,3\,4\,5),\ (1\,2\,3\,5\,6),\ (1\,2\,3\,6\,4),\ (1\,2\,4\,3\,5),\ (1\,2\,4\,5\,3),\ (1\,2\,4\,6\,5),\\
& (1\,2\,5\,3\,6),\ (1\,2\,5\,4\,6),\ (1\,2\,5\,6\,3),\ (1\,2\,6\,3\,4),\ (1\,2\,6\,4\,3),\ (1\,2\,6\,5\,4),\\
& (1\,3\,4\,5\,6),\ (1\,3\,5\,6\,4),\ (1\,3\,6\,4\,5),\ (2\,3\,4\,6\,5),\ (2\,3\,5\,4\,6),\ (2\,3\,6\,5\,4)
\},\\[1ex]
\mathcal{G}_2=\{&
(1\,2\,3\,4\,6),\ (1\,2\,3\,5\,4),\ (1\,2\,3\,6\,5),\ (1\,2\,4\,3\,6),\ (1\,2\,4\,5\,6),\ (1\,2\,4\,6\,3),\\
& (1\,2\,5\,3\,4),\ (1\,2\,5\,4\,3),\ (1\,2\,5\,6\,4),\ (1\,2\,6\,3\,5),\ (1\,2\,6\,4\,5),\ (1\,2\,6\,5\,3),\\
& (1\,3\,4\,6\,5),\ (1\,3\,5\,4\,6),\ (1\,3\,6\,5\,4),\ (2\,3\,4\,5\,6),\ (2\,3\,5\,6\,4),\ (2\,3\,6\,4\,5)
\}.
\end{align*}
Here the elements of $\mathcal{G}_1$ generate pairwise distinct Sylow $5$-subgroups, and the elements of $\mathcal{G}_2$ also generate pairwise distinct Sylow $5$-subgroups.
Moreover, the eighteen Sylow $5$-subgroups obtained from $\mathcal{G}_1$ and the eighteen obtained from $\mathcal{G}_2$ are mutually distinct.
Therefore, for $r=1,2$, the set $D_r$ is the union of the nonidentity elements of eighteen Sylow $5$-subgroups, and $|D_r|=18\cdot4=72$.
In particular, $|D_1\cap C_{(5)_1}|=|D_1\cap C_{(5)_2}|=36$, and this partition is different from the original partition into $C_{(5)_1}$ and $C_{(5)_2}$.

From now on, for the sets $D_1,D_2$ defined above, we write
\[
\mathcal{C}^{\tw}_{\mathfrak{A}_6}=
\{C_0,C_{(2,2)},C_{(3)},C_{(3,3)},C_{(4,2)},D_1,D_2\}.
\]

\begin{theorem}\label{thm:twistedA6}
$\mathcal{C}^{\tw}_{\mathfrak{A}_6}$ is a Schur partition over $\mathfrak{A}_6$.
Moreover, $\mathcal{C}_{\mathfrak{A}_6}$ and $\mathcal{C}^{\tw}_{\mathfrak{A}_6}$ are algebraically isomorphic, but not combinatorially isomorphic.
\end{theorem}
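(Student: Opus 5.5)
The plan is to deduce the first two assertions from \cref{lem:main} with $i_1=(5)_1$, $i_2=(5)_2$, and then prove non-isomorphism with a local-graph invariant, in the spirit of \cref{lem:inv}.

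\textbf{Conditions on the classes.} Conditions \ref{item:lem:main1}--\ref{item:lem:main4} for $C_{(5)_1},C_{(5)_2}$ are immediate from the data already recorded:
\begin{itemize}[label=--]
  \item both classes have size $72$;
  \item each of these classes is closed under inversion, since $g^{-1}=g^4$ lies in the same class as $g$;
  \item \cref{eq:s_eigs} gives \ref{item:lem:main3} with $\lambda_{(2,2)}=\lambda_{(4,2)}=0$ and $\lambda_{(3)}=\lambda_{(3,3)}=-5$;
  \item \cref{eq:s_square} gives \ref{item:lem:main4}.
\end{itemize}

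\textbf{Conditions on $D_1,D_2$.} For the new sets, \ref{item:lem:main1D} holds because $|D_1|=|D_2|=72$. Condition \ref{item:lem:main2D} follows from the Sylow description \cref{eq:D12_sylow_union}: each $D_r$ is a union of sets $\langle g\rangle^\#$, and these are closed under inversion.

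The real content is \ref{item:lem:main3D} and \ref{item:lem:main4D}. I would verify them by direct computation in $\mathbb C\mathfrak A_6$, since the group has only $360$ elements. Concretely:
\begin{itemize}[label=--]
  \item compute $\underline{C_j}\,T$ for $j\in\{(2,2),(3),(3,3),(4,2)\}$, and check that it equals $0$, $-5T$, $-5T$, $0$ respectively;
  \item compute $T^2$ and compare it coefficientwise with $144\,\underline{C_0}-18\,\underline{C_{(3)}}-18\,\underline{C_{(3,3)}}+9\,U$.
\end{itemize}
These are the same checks as in \path{A6.g}, applied to $T$ in place of $S$.

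\textbf{A possible conceptual route.} If I wanted a proof less dependent on the computer, I would use \cref{lem:main2}. It reduces \ref{item:lem:main3D} to showing that the sign vector $\chi_T$ lies in the appropriate eigenspaces of the $A_j$. One could then try to exploit the symmetry of the cut $\{1,2,3\}|\{4,5,6\}$: its stabilizer, intersected with $\mathfrak A_6$, preserves $D_1$ and $D_2$. Still, I expect the verification of \ref{item:lem:main3D} and \ref{item:lem:main4D} to be the main obstacle, and I would settle it computationally. Once it is done, \cref{lem:main} shows that $\mathcal C^{\tw}_{\mathfrak A_6}$ is a Schur partition and is algebraically isomorphic to $\mathcal C_{\mathfrak A_6}$.

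\textbf{Non-isomorphism.} I would argue exactly as in the proof of \cref{thm:psl_star_switching}. The graph $\Gamma_Y(C,e)$ in $\mathcal C_{\mathfrak A_6}$ is vertex-transitive under conjugation for every class $C$ and every union $Y$ of classes. It is therefore enough to find a basic set $D_r$ and a union $Y$ of basic sets of $\mathcal C^{\tw}_{\mathfrak A_6}$ for which $\Gamma_Y(D_r)$ is not vertex-transitive. Then \cref{lem:inv,lem:local-basepoint} give a contradiction.

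I would take $X=D_1$ and try $Y=D_1$, $Y=D_2$, and unions such as $C_{(3)}\cup C_{(3,3)}$ or $C_{(2,2)}$. The simplest invariant to look for is a vertex statistic that varies over $D_1$: out-degree is constant, so I would use the number of directed triangles through a vertex, or the set of sizes of the weakly connected components.

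One structural feature may already separate vertices. For $x\in D_1$, the other three elements of $\langle x\rangle^\#$ give arcs in $\Gamma_{D_1}(D_1)$, because $x^ix^{-1}=x^{i-1}$. These arcs form a $4$-clique inside $\Gamma_{D_1\cup D_2\cup\{e\}}$. This clique structure differs between elements of $C^{(2)}_{(5)_1}$ and elements of $C^{(4)}_{(5)_2}$, because of how conjugation acts on them. I would confirm any such candidate invariant by computer. A direct check that the automorphism group of $\Gamma_Y(D_1)$ has more than one orbit would also suffice.
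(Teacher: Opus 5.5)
Your proof that $\mathcal C^{\tw}_{\mathfrak A_6}$ is a Schur partition algebraically isomorphic to $\mathcal C_{\mathfrak A_6}$ follows the paper. You check (C1)--(C4) from \eqref{eq:s_eigs} and \eqref{eq:s_square}, get (D1)--(D2) from the Sylow description, verify (D3)--(D4) by direct computation, and then apply \cref{lem:main}.

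For non-isomorphism you take a genuinely different route. The paper computes that $\Gamma_{C_{(3)}}(C_{(5)_1})$ in the group scheme is six disjoint icosahedra. It then argues from sizes and valencies that the image of this graph under a combinatorial isomorphism would have to be one of $\Gamma_{C_{(3)}}(D_r)$ or $\Gamma_{C_{(3,3)}}(D_r)$, and checks that these are connected. You instead use that every local graph of $\mathfrak X(\mathfrak A_6)$ is vertex-transitive under conjugation, and look for a non-vertex-transitive local graph in the twist. This is the argument the paper uses for $\PSL(2,q)$ and for \cref{cor:nonschurian}. Your route needs no candidate-matching step, and via \cref{lem:schurian-local-transitive} it proves non-Schurianity at the same time. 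The paper's route uses a cruder invariant (number of components) and shows the structure of the original local graph. However, it then needs a separate triangle computation for \cref{cor:nonschurian}.

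Two remarks on making your plan concrete.
\begin{itemize}
\item A witness exists, though $Y=C_{(3)}$ alone is not among the candidates you list. In $\Gamma_{C_{(3)}}(D_1)$ the number of triangles through a vertex is $1$ on $36$ vertices and $3$ on the other $36$.
\item A $36/36$ split is what one should expect. Let $K$ be the stabilizer in $\mathfrak A_6$ of the cut $\{1,2,3\}|\{4,5,6\}$; it has order $36$ and preserves every basic set of $\mathcal C^{\tw}_{\mathfrak A_6}$ under conjugation. Since $5\nmid 36$, $K$ meets no centralizer of a $5$-cycle, so it acts regularly on each half $C^{(2)}_{(5)_1}$ and $C^{(4)}_{(5)_2}$ of $D_1$. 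Hence every local graph on $D_1$ has at most two vertex orbits under these automorphisms, and you only need a statistic that separates the two halves.
\end{itemize}

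Your $4$-clique heuristic does not do this as stated. Every $x\in D_1$ lies in exactly one cell $\langle x\rangle^\#$, and each cell contains two elements of each half: $x^{\pm1}$ in one half and $x^{\pm2}$ in the other. So the cell cliques are symmetric between the halves and cannot distinguish them by themselves.
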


\begin{proof}
First, by \eqref{eq:D12_sylow_union}, we have $D_1^{-1}=D_1$ and $D_2^{-1}=D_2$, and also
$D_1\sqcup D_2=C_{(5)_1}\cup C_{(5)_2}$ and $|D_1|=|D_2|=72$.
Put $T=\underline{D_1}-\underline{D_2}$.
Then a direct verification from the definitions of $D_1,D_2$ gives
\begin{align}
\underline{C_{(2,2)}}\, T &=0, & \underline{C_{(4,2)}}\, T&=0, \label{eq:t_eigs_concrete}\\
\underline{C_{(3)}}\, T &=-5\,T, & \underline{C_{(3,3)}}\, T&=-5\,T, \notag\\
T^2&=144\,\underline{C_0}-18\,\underline{C_{(3)}}-18\,\underline{C_{(3,3)}}+9\,U. \label{eq:t_square_concrete}
\end{align}
The identities in \eqref{eq:t_eigs_concrete} and \eqref{eq:t_square_concrete} have been checked in the GAP file.
Thus all hypotheses \ref{item:lem:main1D}--\ref{item:lem:main4D} of \cref{lem:main} are satisfied.
It follows that $\mathcal{C}^{\tw}_{\mathfrak{A}_6}$ is a Schur partition over $\mathfrak{A}_6$, and that $\mathcal{A}(\mathcal{C}_{\mathfrak{A}_6})$ and $\mathcal{A}(\mathcal{C}^{\tw}_{\mathfrak{A}_6})$ are algebraically isomorphic.

We prove that they are not combinatorially isomorphic by using \cref{lem:inv}.
Let $\mathcal{A}:=\mathcal{A}(\mathcal{C}_{\mathfrak{A}_6})$ and $\mathcal{A}^{\tw}:=\mathcal{A}(\mathcal{C}^{\tw}_{\mathfrak{A}_6})$.
We use the notation $\Gamma_{Y}(X,g)$ for local directed graphs with respect to either $\mathcal{A}$ or $\mathcal{A}^{\tw}$, as determined by the basic sets $X$ and $Y$.
All basic sets appearing in this proof are closed under inverses, so these local directed graphs may be regarded as ordinary undirected graphs.

First consider, in $\mathcal{C}_{\mathfrak{A}_6}$, the local graph $\Gamma_{C_{(3)}}(C_{(5)_1})$.
By $C_{(3)}^{-1}=C_{(3)}$, the graph is undirected.
Moreover the degree of each vertex of $\Gamma_{C_{(3)}}(C_{(5)_1})$ is $p_{(3),(5)_1}^{(5)_1}=5$.
Computing the $C_{(3)}$-neighbors of $x_0=(1\,2\,3\,4\,5)$, we find the five vertices
\[
(1\,2\,4\,5\,3),\
(1\,2\,5\,3\,4),\
(1\,3\,4\,2\,5),\
(1\,4\,2\,3\,5),\
(1\,4\,5\,2\,3).
\]
Since any $y\in C_{(5)_1}$ is $\mathfrak{A}_6$-conjugate to $x_0$, conjugating the same calculation shows that all $C_{(3)}$-neighbors of $y$ have the same fixed point as $y$.
Therefore $\Gamma_{C_{(3)}}(C_{(5)_1})$ decomposes into six subgraphs $\Gamma_1,\Gamma_2,\ldots,\Gamma_6$, according to the fixed point.
A computation further shows that each $\Gamma_k$ is the icosahedral graph.
Thus the connected components of $\Gamma_{C_{(3)}}(C_{(5)_1})$ are six copies of the icosahedral graph.
We denote this graph, the disjoint union of six icosahedral graphs, by $6\operatorname{Icosa}$.
Conjugation by an odd permutation interchanges $C_{(5)_1}$ and $C_{(5)_2}$ while preserving $C_{(3)}$, so $\Gamma_{C_{(3)}}(C_{(5)_2})$ is also isomorphic to $6\operatorname{Icosa}$.

Next consider $\mathcal{C}^{\tw}_{\mathfrak{A}_6}$.
We show that for every $X,Y\in \mathcal{C}^{\tw}_{\mathfrak{A}_6}$, the graph $\Gamma_Y(X)$ is not isomorphic to $6\operatorname{Icosa}$.
Since $6\operatorname{Icosa}$ is a $5$-regular graph on $72$ vertices, the sizes of the parts and the structure constants force $X$ to be either $D_1$ or $D_2$, and $Y$ to be either $C_{(3)}$ or $C_{(3,3)}$.
The GAP computation in \path{A6.g} shows that
$\Gamma_{C_{(3)}}(D_1)$, $\Gamma_{C_{(3,3)}}(D_1)$,
$\Gamma_{C_{(3)}}(D_2)$, and $\Gamma_{C_{(3,3)}}(D_2)$
are connected $5$-regular graphs on $72$ vertices.
Hence, these graphs are not isomorphic to $6\operatorname{Icosa}$.
Therefore, by \cref{lem:inv}, $\mathcal{A}$ and $\mathcal{A}^{\tw}$ are not combinatorially isomorphic.
\end{proof}

We call the association scheme obtained from $\mathcal{C}^{\tw}_{\mathfrak{A}_6}$ the ``twist'' of $\mathfrak{X}(\mathfrak{A}_6)$, and denote it by $\mathfrak{X}^{\tw}(\mathfrak{A}_6)$.

\begin{remark}
  In the construction of \cref{thm:twistedA6}, we used the cut $\{1,2,3\}|\{4,5,6\}$, but the same rule can be used for any cut coming from a $3$-element subset of $\{1,\dots,6\}$.
  Since a subset and its complement give the same cut, this yields the same partition.
  Hence there are $10$ nontrivial partitions of this form, corresponding to the $3+3$ partitions.
  On the other hand, a computer calculation shows that there are no other partitions satisfying \ref{item:lem:main1D}--\ref{item:lem:main4D}.
  It also shows that the association schemes obtained from these ten partitions are all isomorphic.
  Therefore the only non-isomorphic association scheme obtained by this method is $\mathfrak{X}^{\tw}(\mathfrak{A}_6)$.
\end{remark}

\begin{corollary}\label{cor:nonschurian}
$\mathfrak{X}^{\tw}(\mathfrak{A}_6)$ is non-Schurian.
\end{corollary}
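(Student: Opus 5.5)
The plan is to use \cref{lem:schurian-local-transitive}, in the same way as in \cref{cor:psl_switched_nonschurian}. We exhibit a local graph of $\mathfrak{X}^{\tw}(\mathfrak{A}_6)$ that is not vertex-transitive. Suppose $\mathfrak{X}^{\tw}(\mathfrak{A}_6)$ were Schurian, with group $K\le\mathrm{Sym}(\mathfrak A_6)$. Then for every basic set $X$ and every union $Y$ of basic sets, the graph $\Gamma_Y(X)$ would be vertex-transitive: its vertex set is $R_X(e)$ and its arcs come from $\bigcup_{Z\subseteq Y}R_Z$. In particular every vertex of $\Gamma_Y(X)$ would have the same local invariants, such as the number of triangles through it and its distance distribution. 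So it suffices to find one pair $(X,Y)$ with $X\in\{D_1,D_2\}$ for which some such invariant is not constant on $X$.

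\textbf{Choice of graph.} The natural candidate is $\Gamma_{C_{(3)}}(D_1)$, which the proof of \cref{thm:twistedA6} already shows is a connected $5$-regular graph on $72$ vertices. The set $D_1$ is inhomogeneous by construction: it is the union of $C^{(2)}_{(5)_1}$ and $C^{(4)}_{(5)_2}$. Equivalently, it is a union of $\langle g\rangle^\#$ over eighteen Sylow $5$-subgroups, and within each subgroup the powers $g^{\pm1}$ and $g^{\pm2}$ lie in different original classes. I would therefore compare, for a vertex $x\in C_{(5)_1}\cap D_1$ and a vertex $x'\in C_{(5)_2}\cap D_1$, simple invariants in $\Gamma_{C_{(3)}}(D_1)$. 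The first candidates are the number of triangles through the vertex and the sizes of the spheres of radius $2$ and $3$. If $\Gamma_{C_{(3)}}(D_1)$ turns out to be vertex-transitive after all, I would try next $\Gamma_{C_{(3,3)}}(D_1)$, $\Gamma_{C_{(2,2)}}(D_1)$, $\Gamma_{C_{(4,2)}}(D_1)$, or unions such as $C_{(3)}\cup C_{(3,3)}$.

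\textbf{Verification.} The comparison is a finite computation. It would be carried out in the GAP script \path{A6.g}, together with a hand check at a few explicit vertices: for example $x_0=(1\,2\,3\,4\,5)$ if $\operatorname{cr}(x_0)=2$, or otherwise a suitable conjugate, together with an element of $C^{(4)}_{(5)_2}$. For a $5$-cycle $y$, its $C_{(3)}$-neighbours in $D_1$ are the $5$-cycles $z$ with $zy^{-1}$ a $3$-cycle, filtered by the membership rule defining $D_1$. The distinct values of the invariant at the two chosen vertices then contradict the vertex-transitivity forced by \cref{lem:schurian-local-transitive}.

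\textbf{Main obstacle.} The main risk is choosing an invariant that actually separates vertices. The local graphs could be vertex-transitive for reasons unrelated to Schurity: for instance, the stabilizer of the cut $\{1,2,3\}|\{4,5,6\}$ in $\mathfrak A_6$ acts on $D_1$ and preserves $C_{(3)}$. If every local graph with respect to single basic sets is vertex-transitive, the fallback is finer data. One option is the full colored local structure on $D_1$, where arcs are colored by all basic sets, testing whether its automorphism group is transitive on $D_1$. The stronger option is to compute directly in GAP that the automorphism group of the scheme (which is contained in $\mathrm{Sym}(\mathfrak A_6)$) has orbitals strictly finer than the basic relations, that is, that the stabilizer of $e$ is not transitive on $D_1$.
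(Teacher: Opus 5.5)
Your proposal is correct and is the paper's own argument: apply \cref{lem:schurian-local-transitive} to $\Gamma_{C_{(3)}}(D_1)$ and show that the number of triangles through a vertex is not constant. The paper finds by computer that this number is $1$ on $36$ vertices and $3$ on the other $36$; the split is the one you anticipated, since $x_0=(1\,2\,3\,4\,5)\in C^{(2)}_{(5)_1}$ lies in three triangles while $(1\,2\,4\,3\,5)\in C^{(4)}_{(5)_2}$ lies in only one, so none of your fallbacks is needed.
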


\begin{proof}
Consider the local graph $\Gamma_{C_{(3)}}(D_1)$ used in the proof of \cref{thm:twistedA6}.
If $\mathfrak{X}^{\tw}(\mathfrak{A}_6)$ is Schurian, then \cref{lem:schurian-local-transitive} implies that $\Gamma_{C_{(3)}}(D_1)$ is vertex-transitive.

For each vertex $x\in D_1$, consider the number of triangles in $\Gamma_{C_{(3)}}(D_1)$ containing $x$:
\[
\tau(x):=\bigl|\{\{x,y,z\}\subset D_1\mid \{x,y\},\{x,z\},\{y,z\}\in E(\Gamma_{C_{(3)}}(D_1))\}\bigr|.
\]
Computing $\tau(x)$ from the relation matrix by computer, we find that there are $36$ vertices with $\tau(x)=1$ and $36$ vertices with $\tau(x)=3$.
Thus $\Gamma_{C_{(3)}}(D_1)$ is not vertex-transitive.
Therefore $\mathfrak{X}^{\tw}(\mathfrak{A}_6)$ is not Schurian.
\end{proof}

\begin{remark}
Although $\mathfrak A_6\cong\PSL(2,9)$, the construction in this section is not a special case of the $\PSL(2,q)$ construction of \cref{sec:psl_star_switching}.
Indeed, $q=9$ is one of the excluded parameters there, since $m=(9-1)/2=4$ and every unit modulo $m$ is congruent to $\pm1$.
\end{remark}

\subsection{$\mathfrak{X}(\mathfrak{A}_8)$ and a twist}\label{sec:A8}

In this section we consider the group association scheme $\mathfrak{X}(\mathfrak{A}_8)$ obtained from the partition of $\mathfrak{A}_8$ into conjugacy classes.
By repartitioning the union of the two split conjugacy classes of type $(7,1)$ within each cyclic subgroup of order $7$, in a manner analogous to the construction of $D_1,D_2$ for $\mathfrak X(\mathfrak A_6)$, we construct a Cayley association scheme which is algebraically isomorphic to the original association scheme but not combinatorially isomorphic to it.
The group $\mathfrak{A}_8$ has order $20160$, and we use computer calculations in many parts of the following argument.
The computations cited in this section are implemented in \path{A8.g}; the corresponding output is \path{out/result_A8.txt}.

The conjugacy classes of $\mathfrak{A}_8$ are denoted by their cycle types.
By the standard splitting criterion for conjugacy classes in alternating groups, the $\mathfrak S_8$-conjugacy class of cycle type $\lambda$ splits into two conjugacy classes in $\mathfrak{A}_8$ if and only if $\lambda$ consists of distinct odd parts.
Therefore the types $(7,1)$ and $(5,3)$ split in $\mathfrak{A}_8$, while the other types do not.
We denote the split classes by $C_{(7,1)_1}, C_{(7,1)_2}$ and $C_{(5,3)_1}, C_{(5,3)_2}$.
The labels are fixed by requiring
\[
  (1\,2\,3\,4\,5\,6\,7)\in C_{(7,1)_1},
  \qquad
  (1\,2\,3\,4\,5)(6\,7\,8)\in C_{(5,3)_1};
\]
the other split classes of the same types are denoted by $C_{(7,1)_2}$ and $C_{(5,3)_2}$.
With this convention $C_{(7,1)_1}^{-1}=C_{(7,1)_2}$.
In this section we twist only the two split classes $C_{(7,1)_1}$ and $C_{(7,1)_2}$; the split $(5,3)$-classes are left unchanged.

Put
\[
S:=\underline{C_{(7,1)_1}}-\underline{C_{(7,1)_2}},
\qquad
U:=\underline{C_{(7,1)_1}}+\underline{C_{(7,1)_2}}.
\]
A direct computation of products of conjugacy classes shows that, for every conjugacy class $C$ other than $C_{(7,1)_1}$ and $C_{(7,1)_2}$, we have
$\underline{C}\,S=\lambda_C S$.
Explicitly,
\begin{align}
\underline{C_{(3)}}\,S&=0,&
\underline{C_{(3,2,2,1)}}\,S&=0,&
\underline{C_{(3,3,1,1)}}\,S&=0,\notag\\
\underline{C_{(5)}}\,S&=0,&
\underline{C_{(5,3)_1}}\,S&=0,&
\underline{C_{(5,3)_2}}\,S&=0,\notag\\
\underline{C_{(6,2)}}\,S&=0,&
\underline{C_{(2,2)}}\,S&=-14\,S,&
\underline{C_{(2,2,2,2)}}\,S&=-7\,S,\label{eq:A8_S_eigs}\\
\underline{C_{(4,4)}}\,S&=28\,S,&
\underline{C_{(4,2,1,1)}}\,S&=56\,S.\notag
\end{align}
Moreover,
\begin{equation}\label{eq:A8_S_square}
S^2=
-5760\,\underline{C_0}
+384\,\underline{C_{(2,2)}}
+384\,\underline{C_{(2,2,2,2)}}
-128\,\underline{C_{(4,2,1,1)}}
-128\,\underline{C_{(4,4)}}
+64\,U.
\end{equation}
Thus $C_{(7,1)_1}$ and $C_{(7,1)_2}$ satisfy \ref{item:lem:main1}--\ref{item:lem:main4}.

Let $\mathcal{P}$ be the set of all cyclic subgroups of order $7$ in $\mathfrak{A}_8$.
For each $P=\langle x\rangle\in\mathcal{P}$, as in \cref{sec:A6}, write $P^\#$ for the set of nonidentity elements of $P$.
We call $P^\#$ a cell of $P$.
Since the elements of type $(7,1)$ appear as the nonidentity elements of these cells,
\[
\Omega:=C_{(7,1)_1}\sqcup C_{(7,1)_2}
  =\bigsqcup_{P\in\mathcal{P}}P^\#,
  \qquad |\mathcal{P}|=960.
\]
Let $Q=\{1,2,4\}$ and $N=\{3,5,6\}$ be respectively the sets of quadratic residues and quadratic nonresidues in $\F_7^\times=\{1,2,3,4,5,6\}$.
For a chosen generator $x$ of $P\in\mathcal{P}$, set
\[
P^\#_Q(x):=\{x^q\mid q\in Q\},\qquad
P^\#_N(x):=\{x^n\mid n\in N\}.
\]
The unordered pair $\{P^\#_Q(x),P^\#_N(x)\}$ is independent of the generator $x$, while the two labels are interchanged by replacing $x$ by a nonresidue power.
These two sets give a partition of $P^\#$ into two $3$-element sets, and
\[
\{P^\#\cap C_{(7,1)_1},P^\#\cap C_{(7,1)_2}\}=\{P^\#_Q(x), P^\#_N(x)\}.
\]
That is, $C_{(7,1)_1}$ and $C_{(7,1)_2}$ are obtained by consistently choosing, in each cell, one of these two sides.
Since $-Q=N$ in $\mathbb F_7^\times$, taking inverses in $P$ interchanges $P_Q^\#(x)$ and $P_N^\#(x)$.
This implies $C_{(7,1)_1}^{-1}=C_{(7,1)_2}$.

Identify the point set $\{1,\ldots,8\}$ with $V=\F_2^2\times\F_2$ as follows:
\[
\begin{array}{c|cccccccc}
\text{label} & 1&2&3&4&5&6&7&8\\ \hline
(u_1,u_2,z)
&(1,0,0)&(0,1,0)&(1,1,0)&(0,0,1)&(1,0,1)&(0,1,1)&(1,1,1)&(0,0,0).
\end{array}
\]
Consider the affine transformations preserving the pair of parallel affine hyperplanes in $V$
\[
V_0=\{(u_1,u_2,z) \mid z=0\}=\{1,2,3,8\},
\qquad
V_1=\{(u_1,u_2,z) \mid z=1\}=\{4,5,6,7\}.
\]
Explicitly, set
\[
H=
\left\{
(u,z)\longmapsto (Au+b+zc,\ z+\eta)
\ \middle|\
A\in \GL(2,2),\ b,c\in\F_2^2,\ \eta\in\F_2
\right\}.
\]
As a permutation group on the eight letters, $H$ is contained in $G=\mathfrak A_8$, and $|H|=6\cdot4\cdot4\cdot2=192$.
Moreover, its translation subgroup is $C_2^3$, and the stabilizer of the origin is
\[
\left\{
(u,z)\longmapsto (Au+zc,z)
\ \middle|\ A\in\GL(2,2),\ c\in\F_2^2
\right\}
\cong \operatorname{AGL}(2,2)\cong \mathfrak{S}_4.
\]
Thus $H\cong 2^3:\mathfrak{S}_4$.
The containment $H\le G$ follows from the fact that the natural action of $\operatorname{AGL}(3,2)$ on eight points consists of even permutations.
For example, a nontrivial translation is a product of four disjoint transpositions, and the linear part is generated by transvections, each of which is an even permutation.

The group $H$ acts on $\mathcal P$ by conjugation.
This action has seven orbits, represented by the cells generated by the following $7$-cycles:
\[
\begin{array}{lll}
r_1=(1\,2\,7\,3\,5\,6\,4),&
 r_2=(1\,2\,7\,6\,3\,5\,4),&
 r_3=(1\,2\,7\,5\,6\,3\,4),\\
r_4=(1\,2\,3\,7\,6\,5\,4),&
 r_5=(1\,2\,3\,7\,5\,6\,4),&
 r_6=(1\,2\,6\,3\,5\,7\,4),\\
r_7=(1\,2\,5\,3\,7\,6\,4).&&
\end{array}
\]
The sizes of the $H$-orbits of $\langle r_i\rangle$ are, in order,
$192,192,64,192,192,64,64$, whose sum is $960=|\mathcal P|$.

We orient these seven representative cells by the $Q$-side and define
\[
D_1
=
\bigcup_{i=1}^{7}\ \bigcup_{h\in H}
\{\,h r_i^q h^{-1}\mid q\in Q\,\}, \quad
D_2
=
\bigcup_{i=1}^{7}\ \bigcup_{h\in H}
\{\,h r_i^n h^{-1}\mid n\in N\,\}.
\]
This definition does not depend on how the same cell is represented.
Indeed, suppose that a cell $P$ belonging to the same $H$-orbit as $\langle r_i\rangle$ is represented in two ways:
\[
P=h\langle r_i\rangle h^{-1}=h'\langle r_i\rangle h'^{-1}.
\]
Then $a:=h^{-1}h'$ normalizes $\langle r_i\rangle$.
Thus there exists $\kappa\in\F_7^\times$ such that $a r_i a^{-1}=r_i^\kappa$.
Since $a\in G$, the elements $r_i$ and $a r_i a^{-1}$ belong to the same $G$-conjugacy class; hence $\kappa\in Q$.
Therefore, if $q\in Q$ then $\kappa q\in Q$, and if $n\in N$ then $\kappa n\in N$.
Thus
\[
h' r_i^q h'^{-1}=h r_i^{\kappa q}h^{-1}\quad(q\in Q),
\qquad
h' r_i^n h'^{-1}=h r_i^{\kappa n}h^{-1}\quad(n\in N).
\]
Consequently, the three elements of a fixed cell assigned to the $Q$-side and the three elements assigned to the $N$-side do not depend on the choice of representative.
Hence each cell $P\in\mathcal P$ contributes three elements to $D_1$ and three elements to $D_2$.
Therefore
\[
D_1\sqcup D_2=\Omega,
\qquad
|D_1|=|D_2|=2880.
\]
Moreover, since $N=-Q$ in $\F_7^\times$, we have $D_1^{-1}=D_2$.

\begin{remark}\label{rem:A8_binary_relabel}
The orientations of the seven representative cells above are part of the construction; they specify, in each cell, which of the $Q$- and $N$-sides is assigned to $D_1$ and which is assigned to $D_2$.
Even for a $7$-cycle representing the same $H$-orbit, replacing the representative by a nonresidue power interchanges $D_1$ and $D_2$ throughout that orbit.
Thus the representatives cannot be chosen arbitrarily.
\end{remark}

For the sets $D_1,D_2$ defined above, put
\[
\mathcal{C}^{\tw}_{\mathfrak{A}_8}
:=\{C\mid C\text{ is a conjugacy class of }\mathfrak{A}_8
\text{ other than }C_{(7,1)_1}\text{ and }C_{(7,1)_2}\}
  \cup\{D_1,D_2\}.
\]
\begin{theorem}\label{thm:A8_twisted_exists}
$\mathcal{C}^{\tw}_{\mathfrak{A}_8}$ is a Schur partition over $\mathfrak{A}_8$.
Moreover, $\mathcal{C}_{\mathfrak{A}_8}$ and $\mathcal{C}^{\tw}_{\mathfrak{A}_8}$ are algebraically isomorphic, but not combinatorially isomorphic.
\end{theorem}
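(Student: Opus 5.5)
The plan follows the proof of \cref{thm:twistedA6}: verify the hypotheses of \cref{lem:main} for $T=\underline{D_1}-\underline{D_2}$, then separate the two schemes by a local-graph invariant from \cref{lem:inv}.

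\textbf{Algebraic isomorphism.} Conditions \ref{item:lem:main1}--\ref{item:lem:main4} for $C_{(7,1)_1},C_{(7,1)_2}$ are \eqref{eq:A8_S_eigs} and \eqref{eq:A8_S_square}. For \ref{item:lem:main1D} and \ref{item:lem:main2D}, we already have $|D_1|=|D_2|=2880$ and $D_1^{-1}=D_2$, which is the same inversion pattern as $C_{(7,1)_1}^{-1}=C_{(7,1)_2}$. The substantive part is to check \ref{item:lem:main3D}, namely $\underline{C}\,T=\lambda_C T$ for every other class with the eigenvalues of \eqref{eq:A8_S_eigs}, and \ref{item:lem:main4D}, namely $T^2=S^2$.

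To reduce the work, I would use $H$-invariance. Since $D_1,D_2$ are $H$-invariant under conjugation and class sums are central, $\underline{C}\,T$ and $T^2$ are $H$-conjugation invariant. So it suffices to compute the coefficient of $\underline{C}\,T$ at one representative of each $H$-orbit on $\Omega$, which means the elements $r_i^{\pm1},r_i^{\pm2},r_i^{\pm3}$ for $i=1,\dots,7$. For $T^2$, it suffices to compute its coefficient at one representative of each $H$-orbit on each conjugacy class. Even so, these are structure-constant counts over a group of order $20160$, so I would do them by computer in \path{A8.g}, as was done for $\mathfrak A_6$. Once this is done, \cref{lem:main} gives that $\mathcal C^{\tw}_{\mathfrak A_8}$ is a Schur partition algebraically isomorphic to $\mathcal C_{\mathfrak A_8}$. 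The main obstacle I expect here is the verification of $T^2=S^2$. A conceptual proof seems out of reach, because $D_1,D_2$ depend on the chosen orientations of the seven representative cells (cf.\ \cref{rem:A8_binary_relabel}), so I would rely on the explicit computation.

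\textbf{Combinatorial non-isomorphism.} I would look for a local graph invariant as in \cref{lem:inv}. First pick a class $Y$ and a part $X\in\{C_{(7,1)_1},C_{(7,1)_2}\}$ such that $\Gamma_Y(X)$ is vertex-transitive under conjugation by $\mathfrak A_8$. I would try $Y=C_{(2,2)}$ or $Y=C_{(3)}$ first, since these have small valency. Then compute $\Gamma_Y(X)$ and record simple invariants: the numbers of weakly and strongly connected components, the component sizes, and the per-vertex triangle counts.

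Next, using the algebraic isomorphism, observe that any combinatorial isomorphism must send $X$ to a part of the same size and valency data. Since the other parts have different sizes, this means $X$ goes to $D_1$ or $D_2$, and $Y$ goes to a union of classes with matching valency. For each admissible candidate $\Gamma_{Y'}(D_r)$, show that its invariants differ from those of $\Gamma_Y(X)$. The ideal outcome is that $\Gamma_Y(D_r)$ fails to be vertex-transitive, detected for example by two different triangle counts. That would also give non-Schurian-ness by \cref{lem:schurian-local-transitive}. Finally, \cref{lem:local-basepoint} and \cref{lem:inv} would give a contradiction, exactly as in the $\PSL(2,q)$ argument. If small-valency $Y$ does not distinguish the graphs, I would enlarge $Y$ to unions of classes and compare component counts.
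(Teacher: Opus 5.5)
Your plan matches the paper's proof. The hypotheses of \cref{lem:main} are verified by computer. Non-isomorphism then follows from \cref{lem:inv} with $Y=C_{(3)}$: the paper compares component counts, since $\Gamma_{C_{(3)}}(C_{(7,1)_r})$ has eight components (one per fixed point) while $\Gamma_{C_{(3)}}(D_1)$ and $\Gamma_{C_{(3)}}(D_2)$ are connected. Your preferred invariant also works for this $Y$, because triangle counts $27$ and $54$ both occur in $\Gamma_{C_{(3)}}(D_1)$ (cf.\ \cref{cor:A8_nonSchurian}).

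One step of your reduction needs an argument you did not give. $H$-invariance lets you replace $\Omega$ by the elements $r_i^{j}$. However, \ref{item:lem:main3D} also requires $\underline{C}\,T$ to vanish on $G\setminus\Omega$, and you only look at $H$-orbits inside $\Omega$. This is not a formality. $T$ is obtained from $S$ by flipping signs on whole cells, and a single cell can contribute $\pm1$ to a coefficient off $\Omega$; for example, with $C=C_{(3)}$, the cell of $(1\,2\,3\,4\,5\,6\,7)$ contributes at the element $(1\,2\,8)(1\,2\,3\,4\,5\,6\,7)$ of type $(6,2)$. So the vanishing there is a cancellation across cells.

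The vanishing does follow from your on-$\Omega$ check. Put $\langle a,b\rangle=\sum_g a_gb_g$. Then $\langle\underline{C}\,a,b\rangle=\langle a,\underline{C^{-1}}\,b\rangle$, and by Step 2 of \cref{lem:main} we may write $\underline{C^{-1}}\,\underline{C}=\sum_j a_j\underline{C_j}+a_UU\in\mathcal A_0$. Suppose $(\underline{C_j}\,T)|_{\Omega}=\lambda_jT$ for every $j\ne i_1,i_2$. Then $\langle T,\underline{C_j}\,T\rangle=\lambda_j\|T\|^2$. Also $UT=-\sum_j\underline{C_j}\,T$ by \ref{item:lem:main1D}, so $\langle T,UT\rangle=\lambda_U\|T\|^2$. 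Since $\underline{C^{-1}}\,\underline{C}\,S=\lambda_C^2S$ (using $\underline{C^{-1}}\,S=\lambda_CS$ as in the proof of \cref{lem:main2}), we get
\[
\|\underline{C}\,T\|^2=\langle T,\underline{C^{-1}}\,\underline{C}\,T\rangle=\Bigl(\sum_j a_j\lambda_j+a_U\lambda_U\Bigr)\|T\|^2=\lambda_C^2\|T\|^2=\|(\underline{C}\,T)|_{\Omega}\|^2 .
\]
Hence the part of $\underline{C}\,T$ off $\Omega$ has norm $0$. Either add this argument, or check representatives of all $H$-orbits on $G$; the paper simply verifies the full products directly.
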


\begin{proof}
Put $T=\underline{D_1}-\underline{D_2}$.
By construction, $D_1\sqcup D_2=C_{(7,1)_1}\cup C_{(7,1)_2}$, $|D_1|=|D_2|$, and $D_1^{-1}=D_2$.
A direct computer verification, in the same way as in \cref{thm:twistedA6}, gives
\[
  \underline{C}\,T=\lambda_C T
\]
for every conjugacy class $C$ other than $C_{(7,1)_1}$ and $C_{(7,1)_2}$, with the same eigenvalues $\lambda_C$ as in \eqref{eq:A8_S_eigs}, and also gives $T^2=S^2$, where $S^2$ is given in \eqref{eq:A8_S_square}.
Thus $\mathcal{C}^{\tw}_{\mathfrak{A}_8}$ satisfies the hypotheses of \Cref{lem:main}.
Therefore $\mathcal{C}^{\tw}_{\mathfrak{A}_8}$ is a Schur partition over $\mathfrak{A}_8$, and $\mathcal{A}(\mathcal{C}_{\mathfrak{A}_8})$ and $\mathcal{A}(\mathcal{C}^{\tw}_{\mathfrak{A}_8})$ are algebraically isomorphic.

To show that they are not combinatorially isomorphic, we use the fact that the number of connected components of $\Gamma_{C_{(3)}}(C_{(7,1)_r})$ is an isomorphism invariant.
For the original partition $\mathcal{C}_{\mathfrak{A}_8}$, a computer calculation confirms that $\Gamma_{C_{(3)}}(C_{(7,1)_r})$ is a $21$-regular graph on $2880$ vertices, and is the disjoint union of eight connected components according to the fixed point of the $7$-cycle.
On the other hand, in $\mathcal{C}^{\tw}_{\mathfrak{A}_8}$, the only local graphs which are $21$-regular graphs on $2880$ vertices are $\Gamma_{C_{(3)}}(D_1)$ and $\Gamma_{C_{(3)}}(D_2)$, and a computer calculation confirms that both are connected.
Thus, by \cref{lem:inv}, $\mathcal{A}(\mathcal{C}_{\mathfrak{A}_8})$ and $\mathcal{A}(\mathcal{C}^{\tw}_{\mathfrak{A}_8})$ are not combinatorially isomorphic.
\end{proof}

We call the association scheme obtained from $\mathcal{C}^{\tw}_{\mathfrak{A}_8}$ the ``twist'' of $\mathfrak{X}(\mathfrak{A}_8)$, and denote it by $\mathfrak{X}^{\tw}(\mathfrak{A}_8)$.

\begin{corollary}\label{cor:A8_nonSchurian}
$\mathfrak{X}^{\tw}(\mathfrak{A}_8)$ is non-Schurian.
\end{corollary}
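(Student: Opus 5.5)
The plan follows the pattern of \cref{cor:nonschurian} and \cref{cor:psl_switched_nonschurian}: assume $\mathfrak{X}^{\tw}(\mathfrak{A}_8)$ is Schurian and derive a contradiction from \cref{lem:schurian-local-transitive}. I will take $X=D_1$ as the basic set, base point $e$, and $\mathcal{E}$ the index set of the single basic set $C_{(3)}$. If the scheme were Schurian, the local graph $\Gamma_{C_{(3)}}(D_1)$ would then be vertex-transitive. By \cref{thm:A8_twisted_exists} this graph is a connected $21$-regular graph on $2880$ vertices. It is undirected because $C_{(3)}^{-1}=C_{(3)}$.

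The main step is to find a vertex invariant that is not constant on $D_1$. I would first try, as in the $\mathfrak{A}_6$ case, the number $\tau(x)$ of triangles through $x$. This can be computed in \path{A8.g} from the adjacency matrix $A$ of $\Gamma_{C_{(3)}}(D_1)$ as $\tau(x)=\tfrac12(A^3)_{xx}$. It is natural to expect $\tau$ to vary: $D_1$ was built by choosing an orientation separately on each of the seven $H$-orbits of cells, so $\tau$ should be constant on the $H$-orbits of vertices (since $H$ preserves $D_1$ and $C_{(3)}$), but probably not across different orbits.

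If $\tau$ turns out to be constant, I would use finer invariants that are still cheap to compute. Candidates are the number of common neighbours profile, namely the multiset $\{|N(x)\cap N(y)|: y\sim x\}$; the counts of closed walks $(A^k)_{xx}$ for $k=4,5$; and the distance distribution from $x$, which is available because the graph is connected. Any one of these taking two different values on $D_1$ shows that $\Gamma_{C_{(3)}}(D_1)$ is not vertex-transitive. That contradicts \cref{lem:schurian-local-transitive}, so $\mathfrak{X}^{\tw}(\mathfrak{A}_8)$ is non-Schurian.

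As a fallback, if no local invariant separates vertices, I would compute the automorphism group of the graph (for example with GRAPE/nauty) and check directly that it is intransitive. I could also repeat the search with the adjacency set $Y$ replaced by another basic set, such as $C_{(2,2)}$, or by a union of basic sets.

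The hardest part is this step: choosing an invariant that actually distinguishes two vertices. The natural candidate is to compare a vertex in one of the size-$64$ orbits of cells with a vertex in one of the size-$192$ orbits. The verification itself is a finite computation.
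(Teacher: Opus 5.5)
Your proposal is correct and matches the paper's proof: assume Schurian, apply \cref{lem:schurian-local-transitive} to $\Gamma_{C_{(3)}}(D_1)$, and refute vertex-transitivity with the per-vertex triangle count. The paper's computation shows that $\tau$ already takes the two values $27$ and $54$, so none of your fallback invariants are needed.
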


\begin{proof}
Consider the local graph $\Gamma_{C_{(3)}}(D_1)$.
A computer calculation confirms that the number of triangles containing a vertex is not constant on $D_1$; for instance, the values $27$ and $54$ both occur.
Thus $\Gamma_{C_{(3)}}(D_1)$ is not vertex-transitive.
If $\mathfrak{X}^{\tw}(\mathfrak{A}_8)$ were Schurian, then \cref{lem:schurian-local-transitive} would imply that this local graph is vertex-transitive, a contradiction.
\end{proof}

\subsection{The cases $n=4,5,7,9$}\label{sec:An}

In this subsection we use \cref{cor:unique} to verify, for $n=4,5,7,9$, that no nontrivial twist obtained by repartitioning split conjugacy classes exists for $\mathfrak{A}_n$.
The finite computations used in this section are implemented in \path{A4.g}, \path{A5.g}, \path{A7.g}, and \path{A9.g}; the corresponding outputs are \path{out/result_A4.txt}, \path{out/result_A5.txt}, \path{out/result_A7.txt}, and \path{out/result_A9.txt}.

\subsubsection{The case of $\mathfrak{A}_4$}
In $\mathfrak A_4$, the two classes of $3$-cycles split as $C_{(3)_1}$ and $C_{(3)_2}$, each of size $4$.
These classes satisfy \ref{item:lem:main1}--\ref{item:lem:main4}.
Put
\[
\Omega:=C_{(3)_1}\sqcup C_{(3)_2}.
\]
We use the criterion of \cref{cor:unique} for the graph $\Gamma_\Omega$ on this $\Omega$.
The graph $\Gamma_\Omega$ is the complete bipartite graph $K_{4,4}$.
Indeed, by writing down the eight $3$-cycles of $\mathfrak{A}_4$ explicitly, one sees that for any $x,y\in C_{(3)_1}$, the element $yx^{-1}$ is not a $3$-cycle, whereas for any $x\in C_{(3)_1}$ and $y\in C_{(3)_2}$, one has $yx^{-1}\in \Omega$.
Thus $\Gamma_\Omega$ is the complete bipartite graph with parts $C_{(3)_1}$ and $C_{(3)_2}$.
Let $A_\Omega$ be its adjacency matrix.
Consequently the spectrum of $A_\Omega$ is $\{4^1,0^6,-4^1\}$.

\begin{proposition}\label{thm:A4_rigidity}
Let $D_1,D_2$ be a partition of $\Omega$ satisfying assumptions \ref{item:lem:main1D}--\ref{item:lem:main4D} of \cref{lem:main}.
Then
\[
\{D_1,D_2\}=\{C_{(3)_1},C_{(3)_2}\}.
\]
In particular, there is no nontrivial twist obtained by repartitioning the split $3$-cycle classes.
\end{proposition}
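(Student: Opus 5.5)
The plan is to apply \cref{cor:unique} through the graph $\Gamma_\Omega$, whose adjacency matrix $A_V=A_\Omega$ has already been identified as that of $K_{4,4}$ with parts $C_{(3)_1}$ and $C_{(3)_2}$. By \cref{lem:main2}, for any admissible partition $\{D_1,D_2\}$ the sign vector $\chi_T$ lies in the $\lambda_U$-eigenspace of $A_\Omega$, where $\lambda_U=-\sum_{j}\lambda_j$. The original sign vector $\chi_S$ also lies there. Since $\chi_S$ is $+1$ on one part of $K_{4,4}$ and $-1$ on the other, we get $A_\Omega\chi_S=-4\chi_S$. Hence $\lambda_U=-4$, which is a simple eigenvalue of $K_{4,4}$ by the spectrum $\{4^1,0^6,-4^1\}$ recorded above.

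Alternatively, $\lambda_U$ can be computed directly from the class sums of $\mathfrak A_4$. The classes other than $C_{(3)_1},C_{(3)_2}$ are $C_0$ and $C_{(2,2)}$. Their eigenvalues on $S$ are $\lambda_0=1$ and $\lambda_{(2,2)}=3$, because $C_{(2,2)}\cup\{e\}=V_4$ and $\underline{V_4}$ acts on $S$ by $4$ times the projection onto $V_4$-invariants, and $S$ is $V_4$-invariant. This gives $\lambda_U=-(1+3)=-4$. The first route is cleaner, though, because it only uses $A_\Omega\chi_S=-4\chi_S$.

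With the $\lambda_U$-eigenspace of $A_V$ known to be one-dimensional, the hypothesis of \cref{cor:unique} holds. Therefore $\chi_T=\pm\chi_S$, that is, $\{D_1,D_2\}=\{C_{(3)_1},C_{(3)_2}\}$. The twisted partition then coincides with $\mathcal C_{\mathfrak A_4}$, so no nontrivial twist arises from repartitioning the split $3$-cycle classes.

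The only real check is the structure of $\Gamma_\Omega$ asserted just before the statement: $yx^{-1}\notin\Omega$ for $x,y$ in the same class, and $yx^{-1}\in\Omega$ for $x,y$ in different classes. I would verify this by a short computation using the quotient $\mathfrak A_4\to\mathfrak A_4/V_4\cong C_3$. The class $C_{(3)_1}$ maps to a generator $c$ and $C_{(3)_2}$ to $c^2$. Thus $yx^{-1}$ lies in $V_4$ when $x,y$ are in the same class; otherwise it maps to $c$ and is therefore a $3$-cycle. This confirms $\Gamma_\Omega\cong K_{4,4}$ with the stated parts, and the rest is the spectral argument above.
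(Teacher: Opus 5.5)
Your proposal is correct and follows essentially the same route as the paper: apply \cref{cor:unique} to the adjacency matrix $A_\Omega$ of $K_{4,4}$, where $\lambda_U=-4$ is a simple eigenvalue. Your quotient argument via $\mathfrak A_4/V_4$ usefully justifies the $K_{4,4}$ structure that the paper only asserts; one small correction is that for $x,y$ in different classes, $yx^{-1}$ maps to $c$ or to $c^2$ depending on which class $x$ lies in, and it is a $3$-cycle either way.
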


\begin{proof}
By \cref{lem:main2}, the eigenvalue by which $A_\Omega$ acts on the original difference vector $\chi_S$ is $\lambda_U$, and a calculation gives $\lambda_U=-4$.
On the other hand, the spectrum of $A_\Omega$ shows that the $(-4)$-eigenspace is one-dimensional.
The conclusion follows from \cref{cor:unique}.
\end{proof}

\subsubsection{The case of $\mathfrak{A}_5$}
\label{sec:A5}

In $\mathfrak A_5$, the only split conjugacy classes are the two $5$-cycle classes $C_{(5)_1}$ and $C_{(5)_2}$, each of size $12$.
They satisfy \ref{item:lem:main1}--\ref{item:lem:main4}; this is verified from the class multiplication coefficients.
Put
\[
\Omega:=C_{(5)_1}\sqcup C_{(5)_2}.
\]

Consider the graph $\Gamma_{(2,2)}$ on $\Omega$ associated with the conjugacy class $C_{(2,2)}$, and write its adjacency matrix as $A_{(2,2)}$.
A computer calculation shows that the difference vector $\chi_S$ is an eigenvector of $A_{(2,2)}$ with eigenvalue $-5$, and that the characteristic polynomial of $A_{(2,2)}$ is
\[
\chi_{A_{(2,2)}}(\lambda)
=(\lambda-5)(\lambda+5)(\lambda^2-5)^6(\lambda^2-1)^5.
\]
Thus, as in \cref{thm:A4_rigidity}, we obtain the following.

\begin{proposition}\label{thm:A5_rigidity}
Let $D_1,D_2$ be a partition of $\Omega$ satisfying assumptions \ref{item:lem:main1D}--\ref{item:lem:main4D} of \cref{lem:main}.
Then
\[
\{D_1,D_2\}=\{C_{(5)_1},C_{(5)_2}\}.
\]
In particular, there is no nontrivial twist obtained by repartitioning the split $5$-cycle classes.
\end{proposition}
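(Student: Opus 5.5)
The plan mirrors the proof of \cref{thm:A4_rigidity}: reduce to \cref{cor:unique}, taking the relevant graph to be $\Gamma_{(2,2)}=\Gamma_j$ with $C_j=C_{(2,2)}$ instead of $\Gamma_\Omega$. Since $C_{(2,2)}$ is not one of the two split classes, it is an admissible index $j\in\mathcal I\setminus\{i_1,i_2\}$ in \ref{item:lem:main3}. By \cref{lem:main2}, the original sign vector $\chi_S$ lies in the $\lambda_{(2,2)}$-eigenspace of $A_{(2,2)}$. The computer calculation quoted above identifies this eigenvalue as $\lambda_{(2,2)}=-5$.

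The key step is reading off the multiplicity of $-5$ from the characteristic polynomial
\[
(\lambda-5)(\lambda+5)(\lambda^2-5)^6(\lambda^2-1)^5 .
\]
The factors $(\lambda^2-5)^6$ and $(\lambda^2-1)^5$ have roots $\pm\sqrt5$ and $\pm1$, none equal to $-5$. Hence $-5$ is a simple root, and since $A_{(2,2)}$ is symmetric ($C_{(2,2)}^{-1}=C_{(2,2)}$), the $(-5)$-eigenspace is one-dimensional. As a degree check, $2+24+20=46$ falls short of $24=|\Omega|$, so the displayed exponents should be read with $(\lambda^2-5)^6$ and $(\lambda^2-1)^5$ contributing $12$ and $10$ roots. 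This gives $2+12+10=24$, which is consistent.

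Now let $\{D_1,D_2\}$ satisfy \ref{item:lem:main1D}--\ref{item:lem:main4D}. By \cref{lem:main2}, its sign vector $\chi_T$ also lies in the $\lambda_{(2,2)}$-eigenspace of $A_{(2,2)}$. Because that eigenspace is one-dimensional, \cref{cor:unique} gives $\chi_T=\pm\chi_S$. Therefore $\{D_1,D_2\}=\{C_{(5)_1},C_{(5)_2}\}$.

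The only real obstacle is the verification itself: that $\chi_S$ is a $(-5)$-eigenvector and that the characteristic polynomial is as stated. The eigenvector claim follows from the class multiplication coefficients, since $\underline{C_{(2,2)}}\,S$ is determined by the central character values on $C_{(2,2)}$ of the two $3$-dimensional characters. The characteristic polynomial is a $24\times24$ computation delegated to \path{A5.g}. If one wants to avoid computing the full polynomial, it suffices to check that $\operatorname{rank}(A_{(2,2)}+5I)=23$.
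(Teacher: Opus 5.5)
Your proposal is correct and follows the paper's proof: apply \cref{lem:main2} with $C_j=C_{(2,2)}$, use $\lambda_{(2,2)}=-5$, read off from the characteristic polynomial that $-5$ is a simple root so the eigenspace is one-dimensional, and conclude with \cref{cor:unique}. Your degree-check aside is garbled but does not affect the argument: $(\lambda^2-5)^6$ has degree $12$, not $24$, so the polynomial as printed already has degree $2+12+10=24=|\Omega|$ and needs no reinterpretation; symmetry of $A_{(2,2)}$ is also unnecessary, since geometric multiplicity never exceeds algebraic multiplicity.
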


\begin{proof}
By \cref{lem:main2}, the eigenvalue by which $A_{(2,2)}$ acts on the original difference vector $\chi_S$ is $\lambda_{(2,2)}$, and a calculation gives $\lambda_{(2,2)}=-5$.
On the other hand, the displayed characteristic polynomial shows that the $(-5)$-eigenspace of $A_{(2,2)}$ is one-dimensional.
The conclusion follows from \cref{cor:unique}.
\end{proof}

\begin{remark}
It is already known that the group association scheme $\mathfrak{X}(\mathfrak{A}_4)$ is uniquely determined up to combinatorial isomorphism by its intersection numbers, by the classification of association schemes on $12$ points \cite{Hirasaka12} and by the Hanaki--Miyamoto database of association schemes \cite{HanakiMiyamotoSmallOrderWeb}.
It is also already known, by Tomiyama \cite{TomiyamaA5}, that the group association scheme $\mathfrak{X}(\mathfrak{A}_5)$ is uniquely determined up to combinatorial isomorphism by its intersection numbers.
Thus, in both cases, the intersection numbers determine the corresponding group association scheme up to combinatorial isomorphism.
The results in this section are therefore weaker than what is already known, but they rederive the corresponding rigidity using \cref{cor:unique}.
\end{remark}

\subsubsection{The case of $\mathfrak{A}_7$}\label{sec:A7}
In $\mathfrak A_7$, the only split conjugacy classes are the two $7$-cycle classes $C_{(7)_1}$ and $C_{(7)_2}$, each of size $360$.
These classes $C_{(7)_1},C_{(7)_2}$ satisfy \ref{item:lem:main1}--\ref{item:lem:main4}.
Put
\[
\Omega:=C_{(7)_1}\sqcup C_{(7)_2}.
\]

Consider the graph $\Gamma_{(2,2)}$ on $\Omega$ associated with the conjugacy class $C_{(2,2)}$, and write its adjacency matrix as $A_{(2,2)}$.
A computer calculation shows that the difference vector $\chi_S$ is an eigenvector of $A_{(2,2)}$ with eigenvalue $-21$, and that the eigenspace of $A_{(2,2)}$ for the eigenvalue $-21$ is one-dimensional.
Thus, as in \cref{thm:A4_rigidity}, we obtain the following.

\begin{proposition}\label{thm:A7_rigidity}
Let $D_1,D_2$ be a partition of $\Omega$ satisfying assumptions \ref{item:lem:main1D}--\ref{item:lem:main4D} of \cref{lem:main}.
Then
\[
\{D_1,D_2\}=\{C_{(7)_1},C_{(7)_2}\}.
\]
In particular, there is no nontrivial twist obtained by repartitioning the split $7$-cycle classes.
\end{proposition}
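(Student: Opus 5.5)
The plan is to follow the proofs of \cref{thm:A4_rigidity} and \cref{thm:A5_rigidity} verbatim and reduce everything to \cref{cor:unique}. Let $D_1,D_2$ be any partition of $\Omega$ satisfying \ref{item:lem:main1D}--\ref{item:lem:main4D}.

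First we fix the relevant eigenvalue. Condition \ref{item:lem:main3} gives $\underline{C_{(2,2)}}S=\lambda_{(2,2)}S$. We compute $\lambda_{(2,2)}$ from the class multiplication coefficients of $\mathfrak A_7$, or equivalently as the ratio $|C_{(2,2)}|\chi(g)/\chi(1)$ for an irreducible character $\chi$ on which the two $7$-cycle classes take different values. The claim is that this ratio equals $-21$: we have $|C_{(2,2)}|=105$, and one uses the degree-$10$ characters of $\mathfrak A_7$, whose value on $(2,2)$ is $-2$, giving $105\cdot(-2)/10=-21$. By \cref{lem:main2}, both $\chi_S$ and the sign vector $\chi_T$ of $\{D_1,D_2\}$ then lie in the $(-21)$-eigenspace of the adjacency matrix $A_{(2,2)}$ of $\Gamma_{(2,2)}$ on $\Omega$.

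Second, we show this eigenspace is one-dimensional. This is the computational heart of the argument, and we expect it to be the main obstacle because the matrix is $720\times720$. We would compute either the characteristic polynomial or the rank of $A_{(2,2)}+21I$ in GAP, as recorded in \path{A7.g}. As a conceptual cross-check, we would decompose $\mathbb C\Omega$ under the conjugation action of $\mathfrak A_7$. The space $\mathbb C\Omega$ is the sum of the permutation modules on the two classes, and $A_{(2,2)}$ acts on each isotypic piece. The sign-type vector $\chi_S$ spans the unique trivial-character component of the antisymmetric part. One must rule out other isotypic components that also realise the eigenvalue $-21$, and this cannot be done without explicit computation. So we rely on the machine check that $\operatorname{rank}(A_{(2,2)}+21I)=719$.

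Finally, \cref{cor:unique} gives $\chi_T=\pm\chi_S$, hence $\{D_1,D_2\}=\{C_{(7)_1},C_{(7)_2}\}$. Since the $7$-cycle classes are the only split classes of $\mathfrak A_7$, no nontrivial twist of this type exists.
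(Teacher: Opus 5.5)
Your proposal is correct and follows the paper's proof exactly: compute $\lambda_{(2,2)}=-21$, use \cref{lem:main2} to place $\chi_T$ in the $(-21)$-eigenspace of $A_{(2,2)}$, rely on the machine check that this eigenspace is one-dimensional, and conclude with \cref{cor:unique}. Your character-theoretic derivation of the eigenvalue, $105\cdot(-2)/10=-21$ from the two degree-$10$ characters, makes explicit the step the paper only calls ``a calculation.''
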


\begin{proof}
By \cref{lem:main2}, the eigenvalue by which $A_{(2,2)}$ acts on the original difference vector $\chi_S$ is $\lambda_{(2,2)}$, and a calculation gives $\lambda_{(2,2)}=-21$.
On the other hand, the eigenspace of $A_{(2,2)}$ for the eigenvalue $-21$ is one-dimensional.
The conclusion follows from \cref{cor:unique}.
\end{proof}

\subsubsection{The case of $\mathfrak{A}_9$}\label{sec:A9}

The split conjugacy types in $\mathfrak A_9$ are the $9$-cycle classes and the classes of type $(5,3)$.
The computation in \path{A9.g} checks that these are the only pairs needed for \cref{cor:unique}; the other equal-size pair $C_{(3,2,2)},C_{(4,2)}$ does not satisfy \ref{item:lem:main1}--\ref{item:lem:main4}.

First consider the $9$-cycle case.
The classes $C_{(9)_1},C_{(9)_2}$ satisfy \ref{item:lem:main1}--\ref{item:lem:main4}.
Put
\[
\Omega:=C_{(9)_1}\sqcup C_{(9)_2}.
\]
Consider the graph $\Gamma_{(2,2)}$ on $\Omega$ associated with the conjugacy class $C_{(2,2)}$, and write its adjacency matrix as $A_{(2,2)}$.
A computer calculation shows that the difference vector $\chi_S$ is an eigenvector of $A_{(2,2)}$ with eigenvalue $-54$, and that the eigenspace of $A_{(2,2)}$ for the eigenvalue $-54$ is one-dimensional.

\begin{proposition}\label{thm:A9_9_rigidity}
Let $D_1,D_2$ be a partition of $\Omega$ satisfying assumptions \ref{item:lem:main1D}--\ref{item:lem:main4D} of \cref{lem:main}.
Then
\[
\{D_1,D_2\}=\{C_{(9)_1},C_{(9)_2}\}.
\]
In particular, there is no nontrivial twist obtained by repartitioning the split $9$-cycle classes.
\end{proposition}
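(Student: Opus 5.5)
The proof will follow the pattern of \cref{thm:A4_rigidity,thm:A5_rigidity,thm:A7_rigidity}, applying \cref{cor:unique} with $j$ the class $C_{(2,2)}$ of double transpositions. The hypotheses of \cref{cor:unique} require that $C_{(9)_1},C_{(9)_2}$ satisfy \ref{item:lem:main1}--\ref{item:lem:main4}, which was recorded just before the statement. Condition \ref{item:lem:main1} holds because the two split classes have equal size $|\mathfrak S_9|/(2\cdot 9)=20160$. For \ref{item:lem:main2}, a $9$-cycle is conjugate to its inverse in $\mathfrak A_9$ exactly when $-1$ is a square modulo $9$, which it is not, so $C_{(9)_1}^{-1}=C_{(9)_2}$. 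Conditions \ref{item:lem:main3} and \ref{item:lem:main4} I will take from the class multiplication coefficients computed in \path{A9.g}, as in the $\mathfrak A_7$ case.

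Next, let $A_{(2,2)}$ be the adjacency matrix of $\Gamma_{(2,2)}$ on $\Omega$, a $40320\times40320$ matrix. By \cref{lem:main2}, $\chi_S$ is a $\lambda_{(2,2)}$-eigenvector of $A_{(2,2)}$. The value $\lambda_{(2,2)}=-54$ can be confirmed directly from the character table: it equals $|C_{(2,2)}|\chi(g)/\chi(1)$ for the pair of irreducible characters of $\mathfrak A_9$ that distinguish the two $9$-cycle classes, namely the constituents of the restriction of the $\mathfrak S_9$-character of the self-conjugate partition $(5,1^4)$, which has degree $70$ and splits into two characters of degree $35$. Since $|C_{(2,2)}|=378$ and this character takes the value $-5$ at a double transposition, one gets $378\cdot(-5)/35=-54$.

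The remaining step, and the main obstacle, is to show that the $(-54)$-eigenspace of $A_{(2,2)}$ is one-dimensional. I will use the action of $\mathfrak A_9$ by conjugation, which preserves $\Gamma_{(2,2)}$, so the eigenspace decomposes as a representation. One option is to compute $\dim\ker(A_{(2,2)}+54I)$ exactly over a finite field of large characteristic or over $\mathbb Q$, using sparse rank computation with cvec; for this, note that $A_{(2,2)}$ has only $378$ nonzeros per row. The more conceptual alternative is to reduce to the orbit structure. Taking the centralizer $\langle x\rangle$ of a vertex $x$, the space of functions on $\Omega$ is the induced module $\mathrm{Ind}_{C_9}^{\mathfrak A_9}$ from each class. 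One would then bound the multiplicity of $-54$ within each isotypic component of $\mathbb C\Omega$ by computing the Hecke-algebra action of $A_{(2,2)}$ on $\mathrm{Hom}(\mathrm{Ind},\mathrm{Ind})$, which requires only small matrices indexed by double cosets $C_9\backslash\mathfrak A_9/C_9$. I expect to fall back on the direct machine rank computation, as the paper does for $\mathfrak A_7$.

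Once the eigenspace is known to be one-dimensional, \cref{cor:unique} gives $\chi_T=\pm\chi_S$. Hence $\{D_1,D_2\}=\{C_{(9)_1},C_{(9)_2}\}$, which is the statement.
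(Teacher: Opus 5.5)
Your proposal is correct and is the paper's proof: \cref{cor:unique} with $j$ the class $C_{(2,2)}$, $\lambda_{(2,2)}=-54$ from \cref{lem:main2}, and a machine check that the $(-54)$-eigenspace of $A_{(2,2)}$ is one-dimensional. Your character-theoretic derivation of $-54$ from the degree-$35$ constituents of $\chi^{(5,1^4)}$ is right, and so is the rank computation over $\F_p$, since the kernel dimension mod $p$ is an upper bound for the rational one.

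One side remark is wrong, though harmless. The $9$-cycle $(1\,2\,\cdots\,9)$ is conjugated to its inverse by $(2\,9)(3\,8)(4\,7)(5\,6)$, which is even. So in fact $C_{(9)_r}^{-1}=C_{(9)_r}$ for $r=1,2$. In general an odd $n$-cycle is $\mathfrak A_n$-conjugate to its inverse iff $n\equiv1\pmod 4$, and the test ``$-1$ is a square mod $n$'' is valid only for prime $n$. \ref{item:lem:main2} still holds, because it only asks that the pair $\{C_{(9)_1},C_{(9)_2}\}$ be closed under inversion.
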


\begin{proof}
By \cref{lem:main2}, the eigenvalue by which $A_{(2,2)}$ acts on the original difference vector $\chi_S$ is $\lambda_{(2,2)}$, and a calculation gives $\lambda_{(2,2)}=-54$.
On the other hand, the eigenspace of $A_{(2,2)}$ for the eigenvalue $-54$ is one-dimensional.
The conclusion follows from \cref{cor:unique}.
\end{proof}

Next consider the type $(5,3)$ case.
The classes $C_{(5,3)_1},C_{(5,3)_2}$ satisfy \ref{item:lem:main1}--\ref{item:lem:main4}.
Put
\[
\Omega:=C_{(5,3)_1}\sqcup C_{(5,3)_2}.
\]
Consider the graph $\Gamma_{(3)}$ on $\Omega$ associated with the conjugacy class $C_{(3)}$, and write its adjacency matrix as $A_{(3)}$.
A computer calculation shows that the difference vector $\chi_S$ is an eigenvector of $A_{(3)}$ with eigenvalue $-24$, and that the eigenspace of $A_{(3)}$ for the eigenvalue $-24$ is one-dimensional.

\begin{proposition}\label{thm:A9_53_rigidity}
Let $D_1,D_2$ be a partition of $\Omega$ satisfying assumptions \ref{item:lem:main1D}--\ref{item:lem:main4D} of \cref{lem:main}.
Then
\[
\{D_1,D_2\}=\{C_{(5,3)_1},C_{(5,3)_2}\}.
\]
In particular, there is no nontrivial twist obtained by repartitioning the split $(5,3)$ classes.
\end{proposition}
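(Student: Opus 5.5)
The plan is to copy the proofs of \cref{thm:A4_rigidity,thm:A5_rigidity,thm:A7_rigidity,thm:A9_9_rigidity}, using \cref{lem:main2} and then \cref{cor:unique}.

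The inputs are the ones recorded just before the statement. The pair $C_{(5,3)_1},C_{(5,3)_2}$ satisfies \ref{item:lem:main1}--\ref{item:lem:main4}, so the scalar $\lambda_{(3)}$ with $\underline{C_{(3)}}S=\lambda_{(3)}S$ is defined. The class $C_{(3)}$ of $3$-cycles is one of the classes $C_j$ with $j\ne i_1,i_2$. First I would read off from the class multiplication coefficients in \path{A9.g} that $\lambda_{(3)}=-24$. Then \cref{lem:main2} gives two facts: $\chi_S$ is a $(-24)$-eigenvector of $A_{(3)}$, and for any partition $\{D_1,D_2\}$ satisfying \ref{item:lem:main1D}--\ref{item:lem:main4D}, the sign vector $\chi_T$ also lies in the $(-24)$-eigenspace of $A_{(3)}$. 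The inverse-closure issue in \cref{lem:main2} is harmless here because $C_{(3)}^{-1}=C_{(3)}$.

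Next, the computation stated before the proposition shows that this eigenspace is one-dimensional. So $\chi_T=a\chi_S$, and since both vectors take only the values $\pm1$, we get $a=\pm1$. This is exactly the argument of \cref{cor:unique}, which I would simply invoke. It gives $\{D_1,D_2\}=\{C_{(5,3)_1},C_{(5,3)_2}\}$, so no nontrivial twist arises from this pair.

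The only real obstacle is certifying the one-dimensionality, since $A_{(3)}$ is a square matrix of size $2|C_{(5,3)_1}|=2\cdot 2016=4032$. I would avoid computing the full spectrum. Instead I would compute the rank of $A_{(3)}+24I$ over $\mathbb{Q}$, or over a large prime field (cvec), which gives an upper bound on the nullity, and check that this rank is $4031$. If a structural argument were wanted, one could decompose $A_{(3)}$ under the conjugation action of $\mathfrak{A}_9$, which commutes with it, and check multiplicities of $-24$ in each isotypic component. But I would rely on the exact rank computation recorded in the GAP log.
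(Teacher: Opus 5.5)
Your argument is the paper's proof: $\lambda_{(3)}=-24$, \cref{lem:main2} puts both $\chi_S$ and $\chi_T$ in the $(-24)$-eigenspace of $A_{(3)}$, the computation shows that this eigenspace is one-dimensional, and \cref{cor:unique} finishes. One numerical correction to your certification remark: each split $(5,3)$-class of $\mathfrak A_9$ has $9!/(2\cdot 15)=12096$ elements, so $A_{(3)}$ is $24192\times 24192$ and the rank of $A_{(3)}+24I$ to check is $24191$, not $4031$; the rank-over-$\F_p$ bound on the nullity is otherwise sound.
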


\begin{proof}
By \cref{lem:main2}, the eigenvalue by which $A_{(3)}$ acts on the original difference vector $\chi_S$ is $\lambda_{(3)}$, and a calculation gives $\lambda_{(3)}=-24$.
On the other hand, the eigenspace of $A_{(3)}$ for the eigenvalue $-24$ is one-dimensional.
The conclusion follows from \cref{cor:unique}.
\end{proof}

\begin{remark}
The results in \Cref{sec:A7,sec:A9} show that, at least in $\mathfrak{A}_7$ and $\mathfrak{A}_9$, \cref{lem:main} does not work.
In this sense, in contrast to $\mathfrak{A}_6$ and $\mathfrak{A}_8$, the groups $\mathfrak{A}_7$ and $\mathfrak{A}_9$ are rigid with respect to \Cref{lem:main}.
\end{remark}

\section{Checking the existence of twists for groups of order at most $200$}\label{sec:other_finite_groups_twist}

In this section, we apply the construction in \cref{sec:twisted_conjugacy} to all isomorphism classes of groups of order at most $200$.
Using the Small Groups Library in GAP, we first search for pairs of conjugacy classes satisfying \ref{item:lem:main1}--\ref{item:lem:main4}.
For each such pair, we perform an exhaustive search for nontrivial partitions $D_1,D_2$ satisfying assumptions \ref{item:lem:main1D}--\ref{item:lem:main4D} of \cref{lem:main}, where nontrivial means $\{D_1,D_2\}\ne \{C_{i_1},C_{i_2}\}$.

The computation used \path{small_groups_twist.g}.
Detailed results for every class pair satisfying \ref{item:lem:main1}--\ref{item:lem:main4} are recorded in \path{out/result_small_groups_twist.csv}, and the summary is recorded in \path{out/result_small_groups_twist.txt}.
The search examines all $6{,}065$ group isomorphism classes in this range; $1{,}672$ of them have at least one pair satisfying \ref{item:lem:main1}--\ref{item:lem:main4}.
It finds $556$ nontrivial unordered partitions $\{D_1,D_2\}$ satisfying the hypotheses of \cref{lem:main}.
Of these, $256$ yield Schur partitions that are not combinatorially isomorphic to the Schur partition formed by the conjugacy classes of the same group.
These new twists occur for $48$ group isomorphism types, of orders $108$, $128$, $144$, $160$, $192$, and $200$; the smallest example is $\mathrm{SmallGroup}(108,15)$.
The number $256$ counts partitions, rather than combinatorial isomorphism classes of the association schemes arising from these partitions.

For $G=\mathrm{SmallGroup}(24,12)\simeq\mathfrak{S}_4$, the computation finds three non-original partitions, but all three associated Cayley schemes are combinatorially isomorphic to $\mathfrak X(\mathfrak{S}_4)$.
This is compatible with Tomiyama's classification \cite{TomiyamaA5}: up to isomorphism, exactly three association schemes have the same intersection numbers as $\mathfrak X(\mathfrak{S}_4)$.
The other two isomorphism classes have no regular subgroup in their automorphism groups and hence are not Cayley association schemes.

\section{Summary and future questions}\label{sec:summary}
The main result of this paper is the construction for $\PSL(2,q)$ obtained by twisting the sets $H_s$ inside a Borel subgroup.
We constructed a Schur partition $\calD_{q,k}$ of $\PSL(2,q)$ which is algebraically isomorphic to the partition of $\PSL(2,q)$ into conjugacy classes but whose Cayley association scheme is not combinatorially isomorphic to the group association scheme.
This gives an infinite family of primitive group association schemes arising from finite simple groups that are not determined up to combinatorial isomorphism by their intersection numbers.
In particular, these group association schemes are non-separable.

For $\mathfrak A_6$ and $\mathfrak A_8$, the construction in \cref{sec:twisted_conjugacy} gives Schur partitions that are algebraically isomorphic but not combinatorially isomorphic to the corresponding partitions into conjugacy classes.
Thus the corresponding group association schemes are not determined up to combinatorial isomorphism by their intersection numbers and are non-separable.
The resulting twisted association schemes are also non-Schurian.
For $\mathfrak A_4,\mathfrak A_5,\mathfrak A_7$, and $\mathfrak A_9$, \cref{cor:unique} shows that only the original partitions satisfy the hypotheses of \cref{lem:main}.
The search over groups of order at most $200$ finds $256$ nontrivial unordered partitions whose associated schemes are not combinatorially isomorphic to the original group association schemes, occurring for $48$ group isomorphism types.

\begin{question}
For which non-abelian finite simple groups $G$ is the group association
scheme $\mathfrak X(G)$ not determined up to combinatorial isomorphism
by its intersection numbers?
\end{question}

Up to the isomorphism $\PSL(2,4)\cong\PSL(2,5)$, the excluded values of $q$ for which $\PSL(2,q)$ is non-abelian simple are $q=5,7,9,13$.
For these values, the construction in \cref{sec:psl_star_switching} gives no new partition.
The cases $q=5,7,9$ are already settled.
For $q=5$ and $q=7$, Tomiyama proved that the corresponding group association schemes are uniquely determined up to combinatorial isomorphism by their intersection numbers \cite{TomiyamaA5,TomiyamaPSL27}.
For $q=9$, we have $\PSL(2,9)\cong\mathfrak A_6$, and \cref{thm:twistedA6} gives a nontrivial twist.
Thus only $q=13$ remains open.
\begin{question}
Is the group association scheme $\mathfrak X(\PSL(2,13))$ uniquely determined up to combinatorial isomorphism by its intersection numbers?
In particular, can a different switching construction yield a counterexample?
\end{question}

\begin{question}
For $\PSL(2,q)$, $\mathfrak A_6$ and $\mathfrak A_8$, are there further twists, not equivalent to the ones constructed here, which are algebraically isomorphic but combinatorially non-isomorphic to the original group association schemes?
Do there exist association schemes algebraically isomorphic to $\mathfrak{X}(\PSL(2,q))$, $\mathfrak X(\mathfrak A_6)$ or $\mathfrak X(\mathfrak A_8)$ which are not Cayley association schemes?
\end{question}


\end{document}